\title{The Burer-Monteiro SDP method can fail even above the Barvinok-Pataki bound}
\author{
    Liam O'Carroll\thanks{Northwestern University. 
    Email: \email{liamocarroll2023@u.northwestern.edu}. Supported by an NSF REU supplement associated with CCF-1934931 and an undergraduate research grant from Northwestern University.}
    \and
    Vaidehi Srinivas\thanks{Northwestern University. 
    Email: \email{vaidehi@u.northwestern.edu}.}
    \and
    Aravindan Vijayaraghavan\thanks{Northwestern University. 
    Email: \email{aravindv@northwestern.edu}. The last two authors were also supported by the National Science Foundation (NSF) under Grant Nos. CCF-1652491, CCF-1934931 and EECS-2216970.}
}
\date{}
\begin{document}

\maketitle

\begin{abstract}
	The most widely used technique for solving large-scale semidefinite programs (SDPs) in practice is the non-convex Burer-Monteiro method, which explicitly maintains a low-rank SDP solution for memory efficiency. There has been much recent interest in obtaining a better theoretical understanding of the Burer-Monteiro method. When the maximum allowed rank $p$ of the SDP solution is above the Barvinok-Pataki bound (where a globally optimal solution of rank at most \(p\) is guaranteed to exist), a recent line of work established convergence to a global optimum for generic or smoothed instances of the problem. However, it was open whether there even exists an instance in this regime where the Burer-Monteiro method fails. We prove that the Burer-Monteiro method can fail for the Max-Cut SDP on $n$ vertices when the rank is above the Barvinok-Pataki bound ($p \ge \sqrt{2n}$). We provide a family of instances that have spurious local minima even when the rank $p = n/2$. Combined with existing guarantees, this settles the question of the existence of spurious local minima for the Max-Cut formulation in all ranges of the rank and justifies the use of beyond worst-case paradigms like smoothed analysis to obtain guarantees for the Burer-Monteiro method.
\end{abstract}


\section{Introduction}
\label{sec:introduction}






Semidefinite programs (SDPs) are a powerful algorithmic tool with wide-ranging applications in combinatorial optimization, control theory, machine learning and operations research. Notably, they yield optimal approximation algorithms for NP-hard problems like Max-Cut~\cite{GW95} and other constraint satisfaction problems~\cite{Ragh}. While interior-point algorithms and the ellipsoid method give polynomial time guarantees for solving semidefinite programs, memory becomes a bottleneck for relatively modest instance sizes. This has prompted research into scalable semidefinite programming algorithms---see \cite{MHA20} for a recent survey.

One of the most popular methods for solving SDPs in practice is the one pioneered by Burer and Monteiro \cite{BM03,BM05}, which explicitly constrains the rank of the SDP solution for efficiency. Consider the celebrated Goemans-Williamson SDP relaxation for Max-Cut \cite{GW95}:
\begin{align*}
	\label{eq:MC-SDP} \tag{MC-SDP}
	\begin{split}
		&\min_{X \in \sym^{n \times n}} \inangle{A, X} \\
		\text{s.t.} \quad &X_{ii} = 1 \text{ for $i \in [n]$}, \\
		&X \succeq 0.
	\end{split}
\end{align*}
Here $\sym^{n \times n}$ denotes the set of all symmetric $n \times n$ matrices and the cost matrix $A \in \sym^{n \times n}$ is typically the adjacency matrix of a weighted graph. (Note that \eqref{eq:MC-SDP} can also be reformulated as a maximization SDP where the cost matrix is the Laplacian matrix $L = D - A$, where $D$ is the diagonal degree matrix. The two formulations are equivalent since changing the diagonal of the cost matrix just corresponds to adding a constant to the objective value at each feasible point.) The above Goemans-Williamson SDP also gives a natural semidefinite programming relaxation for the Grothendieck problem~\cite{alon2004approximating}, quadratic programming~\cite{Nesterov98, charikarwirth2004}, variants of community detection problems~\cite{Abbe2017}, and other combinatorial optimization problems; in many of these settings the matrix $A$ may also have negative entries.  

Instead of maintaining a solution $X$ in $\sym^{n \times n}$, the Burer-Monteiro method explicitly maintains a low-rank solution of the form $X=YY^\top$ where $Y \in \R^{n \times p}$, and aims to solve the following optimization problem:
\begin{align*}
	\label{eq:MC-BM} \tag{MC-BM}
	\begin{split}
		&\min_{Y \in \R^{n \times p}} \inangle{A, YY^\top} \\
		\text{s.t.} \quad &||Y_i||^2 = 1 \text{ for $i \in [n]$},
	\end{split}
\end{align*}
where $Y_i \in \R^p$ denotes the $i$th row of $Y$, taken as a column vector, and $||\cdot||$ denotes the Euclidean norm. We may also denote the objective of \eqref{eq:MC-BM} as $\obj(Y) := \inangle{A, YY^\top}$. We denote the feasible region as
\begin{align*}
	\MMC_{n, p} := \inbraces{Y \in \R^{n \times p} : ||Y_i||^2 = 1 \text{ for $i \in [n]$} }.
\end{align*}
(When $n$ is clear from the context or unimportant, we may just write $\MMC_p$.) Note that $\MMC_{n, p}$ is a product manifold since it can be viewed as a Cartesian product of $n$ unit spheres in $\R^p$.

This formulation yields significant memory savings when $p \ll n$ by storing $Y$ instead of $X$ and has the additional advantage of dropping the positive semidefiniteness constraint. On the other hand, \eqref{eq:MC-BM} is a non-convex constrained optimization problem. Local optimization methods like Riemannian gradient descent and other heuristics for manifold optimization or constrained optimization are used to solve this non-convex problem with surprisingly good empirical results ~\cite{BM03, BM05}.
This motivates the following question:

\vspace{5pt}
\qquad \qquad {\em When does the Burer-Monteiro method converge to a globally optimal solution? }
\vspace{5pt}

\begin{figure}
	\centering

	\scriptsize
	\begin{tikzpicture}[scale=1.3]
		\tikzstyle{subj} = [circle, minimum width=4pt, fill, inner sep=0pt]

		\draw[thick] (0, 0) to (12, 0);

		\node[subj, label=below:{\normalsize{$p = 0$}}] at (0, 0) {};
		\node[subj, label=below:{\normalsize{$p \approx \sqrt{2n}$}}] at (4, 0) {};
		\node[subj, label=below:{\normalsize{$p = \frac{n}{2}$}}] at (8, 0) {};
		\node[subj, label=below:{\normalsize{$p = n$}}] at (12, 0) {};

		\draw[dashed] (0, 0) to (0, 1.8);
		\draw[dashed] (4, 0) to (4, 1.8);
		\draw[dashed] (4, -0.75) to (4, -1.4);
		\draw[dashed] (8, 0) to (8, 1.8);
		\draw[dashed] (12, 0) to (12, 1.6);
		\draw[dashed] (12, -0.5) to (12, -1.4);

		\node[] at (8, -1) {\cite{Bar95, Pat98} Above Barvinok-Pataki bound:};
		\node[] at (8, -1.2) {\eqref{eq:MC-BM} optimal value same as \eqref{eq:MC-SDP} optimal value};

		\node[] at (10, 0.65) {\cite{BVB18} Global convergence guarantee};

		\node[] at (2, 0.7) {\cite{WW20} No possible global convergence};
		\node[] at (2, 0.5) {guarantee for generic instances};

		\node[] at (6, 0.7) {\cite{BVB18} Global convergence guarantee };
		\node[] at (6, 0.5) {for generic instances};

		\node[] at (6, 1.6) {\textbf{This work:} Spurious minima exist};
		\node[] at (6, 1.4) {(no possible global convergence};
		\node[] at (6, 1.2) { guarantee)};
	\end{tikzpicture}
	\normalsize

	\caption{Combined with existing guarantees, our work settles the question of the existence of spurious local minima for \eqref{eq:MC-BM} in all ranges of the rank \(p\).}
	\label{fig:existence-in-all-ranges-of-rank}
\end{figure}
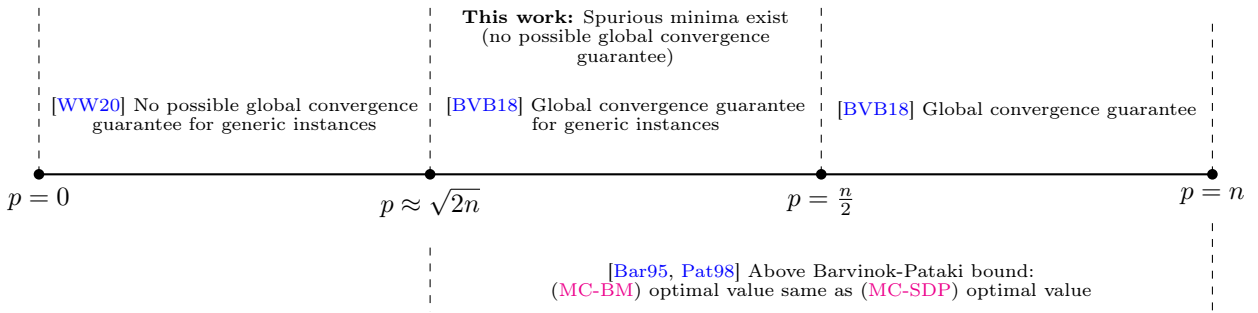

This question has attracted much recent interest on the theoretical front. It is known that when the rank bound $p$ of the SDP solution is above the so-called \textit{Barvinok-Pataki bound} $p \ge \sqrt{2n}$ (more formally, $p$ satisfies $\frac{p (p + 1)}{2} > n$), a globally optimal solution of rank at most $p$ is guaranteed to exist \cite{Bar95, Pat98}. Moreover, below this bound, there exist instances for which \eqref{eq:MC-BM} has a spurious local minimum \cite{WW20}. When the rank bound $p$ is above the Barvinok-Pataki bound, Boumal, Voroninski, and Bandeira \cite{BVB16, BVB18} showed that 
for {\em generic} instances,\footnote{By generic instances, we mean the guarantee holds for all cost matrices $A \in \sym^{n \times n}$ except a set of zero measure.} any second-order critical point $Y$ of \eqref{eq:MC-BM} is globally optimal for \eqref{eq:MC-BM}, and thus $YY^\top$ is optimal for \eqref{eq:MC-SDP} (their result also applies for a broad class of SDPs with equality constraints).
Under such conditions, known algorithms converge to second-order critical points of \eqref{eq:MC-BM}, and polynomial-time convergence guarantees can be shown for smoothed instances~\cite{BBJN18, PJB18, CM21}.

On the other hand, for general cost matrices $A$, the best known bound only guarantees convergence to a global optimum of the Burer-Monteiro method when $p>\frac{n}{2}$ ~\cite[Cor. 5.11]{BVB18}.
For general cost matrices $A$ of \eqref{eq:MC-BM}, there is a large range for the rank bound $p \in [\sqrt{2n}, n/2]$ where we do not know whether the Burer-Monteiro method works.
To the best of our knowledge, it was open whether there exists {\em any }instance (not specific to Max-Cut) where the Burer-Monteiro method fails above the Barvinok-Pataki bound. In fact, the authors of \cite{BVB18} pose this question:

\vbox{ 
	\begin{displayquote}
		\textit{``It remains unclear whether or not a zero-measure set of cost matrices must be excluded. Resolving this question is key to gaining a deeper understanding of the relationship between \eqref{eq:MC-SDP} and \eqref{eq:MC-BM}.''}
		\begin{flushright}
			---End of Section 6 in \cite{BVB18}, mutatis mutandis
		\end{flushright}
	\end{displayquote}
}

Our main theorem resolves this question by constructing an instance of \eqref{eq:MC-BM} with a spurious local minimum (i.e., a local minimum which is not globally optimal) when $p$ is as large as $\Theta(n)$ (note that a spurious local minimum is also a spurious second order critical point).  This result is contextualized in Figure \ref{fig:existence-in-all-ranges-of-rank}.

\begin{theorem}
	\label{thm:main-result}
	For any $n \ge 4$ and $2 \le p \le n / 2$, there exist cost matrices $A$ for which the associated instance of \eqref{eq:MC-BM} has a spurious local minimum.
\end{theorem}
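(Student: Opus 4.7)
My plan is to exhibit an explicit family of spurious local minima built on a rigid antipodal pair structure. I first handle the extremal case $n = 2p$, and then reduce general $2 \le p \le n/2$ to it via padding by decoupled ``trivial'' vertices. For $n = 2p$, fix an orthonormal basis $v_1, \dots, v_p$ of $\R^p$, group the vertices into $p$ pairs, and set $Y_{2k-1} := v_k$ and $Y_{2k} := -v_k$ for each $k \in [p]$. Define $A$ by zero diagonal, within-pair entries $A_{2k-1, 2k} = A_{2k, 2k-1} := p - \tfrac{3}{2}$, and cross-pair entries $A_{ij} := -1$ for all $i, j$ with $\lceil i/2 \rceil \ne \lceil j/2 \rceil$. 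The design is engineered so that, taking the Lagrange multiplier $D := (\tfrac{3}{2} - p) I$, the dual certificate $S := A - D$ admits the tensor factorization $S = M \otimes J_2$, where $J_2$ is the $2 \times 2$ all-ones matrix and $M := (p - \tfrac{1}{2}) I_p - J_p$.

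First-order criticality $AY = DY$ reduces to the identity $\sum_{k' \ne k}(Y_{2k'-1} + Y_{2k'}) = 0$ from antipodality, which causes all cross-pair contributions to $(AY)_i$ to cancel. The Riemannian Hessian at a critical point on the product of spheres is $\dot Y \mapsto 2 \langle S, \dot Y \dot Y^\top \rangle$ restricted to tangent vectors satisfying $\dot Y_i \cdot Y_i = 0$ for all $i$. Decomposing $\dot Y$ into its $p$ columns $w^{(1)}, \dots, w^{(p)} \in \R^n$, and noting the tangent constraint reads $w^{(k)}_{2k-1} = w^{(k)}_{2k} = 0$, the tensor structure of $S$ collapses the Hessian into a sum $2 \sum_k (s^{(k)})^\top M^{(k)} s^{(k)}$, where $s^{(k)} \in \R^{p-1}$ records the within-pair sums $w^{(k)}_{2k'-1} + w^{(k)}_{2k'}$ for $k' \ne k$ and $M^{(k)}$ denotes the principal $(p-1)\times(p-1)$ submatrix of $M$. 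Since $M^{(k)} = (p - \tfrac{1}{2}) I_{p-1} - J_{p-1}$ has eigenvalues $\tfrac{1}{2}$ (along $\mathbf{1}_{p-1}$) and $p - \tfrac{1}{2}$ (multiplicity $p-2$), it is strictly positive definite, so the Hessian is PSD on the tangent space. Meanwhile $M$ itself has eigenvalue $-\tfrac{1}{2}$ along $\mathbf{1}_p$, so $S \not\succeq 0$ globally and no SDP optimality certificate for $Y$ exists.

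To upgrade the PSD Hessian to a genuine local minimum despite its degeneracy, I identify its kernel with the tangent to the \textit{antipodal manifold} $\mathcal{A} := \{Y' \in \MMC_{n,p} : Y'_{2k-1} + Y'_{2k} = 0 \text{ for all } k\}$, a smooth $p(p-1)$-dimensional submanifold containing $Y$. Applying antipodality twice, I will show that $\obj$ is \textit{constant} on $\mathcal{A}$, equal to $3p - 2p^2$: the within-pair contributions sum to $p \cdot 2(p - \tfrac{3}{2})(-1) = p(3 - 2p)$, while each cross-pair quadruple of entries cancels. Combined with the strict positivity of the Hessian transverse to $\mathcal{A}$ (from the strict definiteness of each $M^{(k)}$), the standard argument for non-degenerate critical submanifolds implies that $Y$ is a local minimum of $\obj$. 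Spuriousness follows by comparing with the aligned configuration $Y'_i = w$ for any fixed unit vector $w$: $\obj(Y') = \sum_{i,j} A_{ij} = p - 2p^2$, which is strictly smaller than $\obj(Y) = 3p - 2p^2$ by $2p$.

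For $n > 2p$, I augment $A$ by a zero $(n - 2p) \times (n - 2p)$ block and extend $Y$ by arbitrary unit vectors in the new coordinates. Block-diagonality of both $A$ and $D$ decouples the critical-point condition and the tangent-space Hessian, while the aligned $Y'$ (similarly extended) still strictly beats $Y$ since the augmented block contributes zero to either objective. The principal obstacle is the Hessian degeneracy: to rule out third-order descent from the degenerate directions, one must identify the Hessian kernel \textit{exactly} with the antipodal-tangent directions, and then use that $\obj$ is exactly constant (not merely flat to second order) along the antipodal manifold. This precise identification rests on the tensor factorization $S = M \otimes J_2$, which in turn encodes the symmetric pair structure of $A$.
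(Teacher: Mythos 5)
Your construction coincides with the paper's up to reordering the vertices into adjacent antipodal pairs and an overall positive scaling: your $M=(p-\tfrac12)I_p-J_p$ is $(p-\tfrac32)$ times the paper's deterministic ``almost-average'' matrix, your $Y$ is the axial position $\yaxial$, and your first-order, second-order, spuriousness, and padding arguments all match the paper and are correct (the positive definiteness of every $M^{(k)}$ is exactly the strict pseudo-PD property the paper isolates).

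The gap is in the step that upgrades the degenerate PSD Hessian to a genuine local minimum. You invoke ``the standard argument for non-degenerate critical submanifolds'' on the strength of three facts: $\obj$ is exactly constant on $\mathcal{A}$, $\ker \Hess\obj(Y)=\T_Y\mathcal{A}$, and the Hessian transverse to $\mathcal{A}$ at the single point $Y$ is positive definite. These three facts alone do \emph{not} imply local minimality. Counterexample: $f(x,y)=y^2-x^3y$ on $\R^2$ with the $x$-axis in the role of $\mathcal{A}$ --- $f$ vanishes identically on the $x$-axis, the Hessian at the origin is $\diag(0,2)$ with kernel exactly the tangent to the $x$-axis and positive transverse part, yet $f(x,x^3/2)=-x^6/4<0$, so the origin is not a local minimum. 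What the Morse--Bott lemma actually requires is that $\mathcal{A}$ be a \emph{critical} submanifold: every point of $\mathcal{A}$ near $Y$ must itself be a first-order critical point of $\obj$ on all of $\MMC_{n,p}$, with the Hessian non-degenerate transverse to $\mathcal{A}$ along a whole neighborhood of $Y$ inside $\mathcal{A}$, not only at $Y$. In your instance this does hold --- for any antipodal $Y'$ the cross-pair terms cancel and $AY'=(\tfrac32-p)Y'$, so $\grad\obj(Y')=0$, and transverse positive definiteness then propagates to nearby points of $\mathcal{A}$ by continuity --- but this is precisely the hypothesis that is missing from your write-up; it is not implied by constancy of $\obj$ on $\mathcal{A}$, and you neither state nor verify it. Once it is added you must still actually invoke (or prove) a Morse--Bott normal form on the manifold $\MMC_{n,p}$, which is a nontrivial citation outside the optimization literature. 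For contrast, the paper deliberately avoids this route: it runs Riemannian gradient descent from an arbitrary nearby point, shows via the potential $\Phi\bigl(\bigl[\begin{smallmatrix}G_1\\ G_2\end{smallmatrix}\bigr]\bigr)=\|G_1+G_2\|^2$ that the iterates contract geometrically onto some antipodal configuration (using pseudo-positive-definiteness of $M$ exactly where you use positivity of $M^{(k)}$), and concludes from monotonicity of descent that every nearby point has objective value at least $\obj(\yaxial)$. Your approach is viable and arguably cleaner if the critical-submanifold hypothesis is supplied and the Morse--Bott lemma is properly justified, but as written the final step does not constitute a proof.
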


Combining Theorem \ref{thm:main-result} with other existing results yields a clearer understanding of the optimization landscape and a characterization of the range of $p$ for which \eqref{eq:MC-BM} can have spurious local minima. Furthermore, our result justifies the use of beyond worst-case paradigms like smoothed analysis to obtain global convergence guarantees for the Burer-Monteiro method. (See also Appendix \ref{app:prior-work} for a further discussion of prior work.)

\paragraph{Outline of paper.}

We begin with Section \ref{sec:preliminaries} (Preliminaries) which provides necessary background for the construction of spurious local minima in Theorem \ref{thm:main-result}.
In Section \ref{sec:main-results}, we list the main lemmas needed for Theorem \ref{thm:main-result} and show how Theorem \ref{thm:main-result} follows. Sections \ref{sec:criticality-proofs}--\ref{sec:bad-matrices-constructions} contain proofs of these lemmas. In particular, Section \ref{sec:criticality-proofs} contains the proofs that our construction yields a spurious first and second-order critical point. Section \ref{sec:local-minimum} contains the proof that our construction yields a spurious local minimum. Section \ref{sec:bad-matrices-constructions} contains constructions of a set of matrices needed in the proof of Theorem \ref{thm:main-result}. Section \ref{sec:experiments} contains experiments, and we present potential follow-up directions in Section \ref{sec:conclusion} (Conclusion).

\section{Preliminaries}
\label{sec:preliminaries}

Section \ref{subsec:Riemannian-derivatives} gives an overview of the Riemannian geometry of \eqref{eq:MC-BM}. In Section \ref{subsec:necessary-sufficient-conditions}, we use this geometry to give necessary conditions for local minimality and a characterization of global optimality. Section \ref{subsec:Riemannian gradient descent} provides an overview of Riemannian gradient descent, which is key to proving local minimality in Theorem \ref{thm:main-result} (see Section \ref{sec:local-minimum} for details). Section \ref{subsec:other-defs} contains definitions (including classes of matrices) used in the construction for Theorem \ref{thm:main-result}.

\subsection{Riemannian derivatives}
\label{subsec:Riemannian-derivatives}

$\MMC_p$ is a smooth embedded submanifold of $\R^{n \times p}$ \cite[Prop. 1.2]{BVB18}, and is furthermore an embedded Riemannian submanifold of $\R^{n \times p}$ \cite[Def. 3.55]{Bou22} if we equip the linearizations of $\MMC_p$ at each point (known as the \textit{tangent spaces}---defined below) with (a restriction of) the inner product $\inangle{\cdot, \cdot}$ on $\R^{n \times p}$.

\begin{proposition}[Tangent space {\cite[Lem. 2.1]{BVB18}}]
	\label{def:MC-BM-tangent-space}
	The tangent space to $\MMC_p$ at $Y \in \MMC_p$, denoted $\T_Y \MMC_p$, is the following subspace of $\R^{n \times p}$:
	\begin{align*}
		\T_Y \MMC_p = \inbraces{U \in \R^{n \times p} : \inangle{Y_i, U_i} = 0 \text{ for $i \in [n]$}}.
	\end{align*}
	Here, $Y_i \in \R^p$, $U_i \in \R^p$ denote the $i$th rows of $Y$ and $U$ respectively, taken as column vectors. (In other words, $\T_Y \MMC_p$ is the space of matrices row-wise orthogonal to $Y$.) We call an element of $\T_Y \MMC_p$ a tangent vector (at $Y$).
\end{proposition}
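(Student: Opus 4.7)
The plan is to exhibit $\MMC_{n,p}$ as a regular level set of a smooth map and then read off the tangent space as the kernel of the differential at $Y$. Define $F : \R^{n \times p} \to \R^n$ by $F(Y)_i = \|Y_i\|^2 - 1$ for each $i \in [n]$, so that $\MMC_p = F^{-1}(0)$.

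I would first compute the differential of $F$ at an arbitrary point $Y$. Since $F_i$ depends only on the $i$-th row of $Y$, a direct computation gives $dF_Y[U]_i = 2\inangle{Y_i, U_i}$ for every $U \in \R^{n \times p}$. To invoke the regular level set theorem, one needs $dF_Y$ to be surjective for every $Y \in \MMC_p$; this is straightforward, since the Jacobian is block-diagonal with $i$-th block equal to $2 Y_i^\top \in \R^{1 \times p}$, and each $Y_i$ is nonzero because $\|Y_i\| = 1$. Hence $dF_Y$ has full row rank $n$.

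By the regular level set (submersion) theorem, it then follows that $\MMC_p$ is a smooth embedded submanifold of $\R^{n \times p}$ of codimension $n$, and its tangent space at $Y$ coincides with the kernel of $dF_Y$, namely
\[
\T_Y \MMC_p \;=\; \ker dF_Y \;=\; \inbraces{U \in \R^{n \times p} : \inangle{Y_i, U_i} = 0 \text{ for all } i \in [n]},
\]
as claimed. As a sanity check (and an alternative route), one can also observe that $\MMC_{n,p}$ is literally the Cartesian product of $n$ unit spheres $S^{p-1} \subset \R^p$; since the tangent space to a product manifold factors as the product of tangent spaces of the factors, and $\T_v S^{p-1} = v^\perp$, stacking these row-wise gives exactly the same description. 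There is no real obstacle in this proof---the only thing to verify carefully is the surjectivity of $dF_Y$, which is immediate from the block-disjoint structure of the defining constraints and the fact that $Y$ has unit-norm rows.
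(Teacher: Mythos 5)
Your proof is correct and is essentially the standard argument: the paper itself does not prove this statement but cites it from \cite[Lem.~2.1]{BVB18}, where the same submersion/regular-level-set computation (with $F(Y)_i = \|Y_i\|^2 - 1$, $dF_Y[U]_i = 2\inangle{Y_i,U_i}$, and surjectivity from the unit-norm rows) is used. The product-of-spheres observation you add as a sanity check is also the viewpoint the paper itself adopts when it notes that $\MMC_{n,p}$ is a product manifold.
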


The Riemannian gradient at $Y \in \MMC_p$, $\grad \obj(Y)$, is the orthogonal projection of the classical Euclidean gradient at $Y$ onto $\T_Y \MMC_p$, which yields the following expression (see Appendix \ref{app:toolbox}):
\begin{align}
	\label{eq:nu}
	\grad \obj(Y) = 2 (A - \diag(\nu)) Y, \quad \text{where } \nu_i := \sum_{j = 1}^n A_{ij} \inangle{Y_i, Y_j}, \text{for all $i \in [n]$}.
\end{align}
(Note that $\nu \in \R^n$ is a function of $Y$, although we write $\nu$ instead of, e.g., $\nu(Y)$ when $Y$ is clear from context.) We take \eqref{eq:nu} to be the definition of $\nu$ from now on.

The Riemannian Hessian of $\obj$ at $Y \in \MMC_p$, $\Hess \obj(Y)$, is a linear, symmetric map from $\T_Y \MMC_p$ to $\T_Y \MMC_p$ given by the classical differential of (a smooth extension of) $\grad \obj(Y)$, projected to the tangent space \cite[Cor. 5.16]{Bou22}. This yields (see Appendix \ref{app:toolbox}):
\begin{align*}
	\Hess \obj(Y) [U] = 2 \Proj_Y \bigg( (A - \diag(\nu))U \bigg)
\end{align*}
for $U \in \T_Y \MMC_p$, where the linear map $\Proj_Y : \R^{n \times p} \to \T_Y \MMC_p$ denotes the orthogonal projector onto $\T_Y \MMC_p \subseteq \R^{n \times p}$, i.e., $\Proj_Y (Z) = \argmin_{U \in \T_Y \MMC_p} \norm{U - Z}$. In what follows, $\Hess \obj(Y)$ will only appear as part of a quadratic form, yielding the following cleaner expression:
\begin{align*}
	\inangle{ \Hess \obj(Y) [U], U } = \inangle{ 2 \Proj_Y \inparen{ (A - \diag(\nu))U }, U }
	= 2 \inangle{A - \diag(\nu), UU^\top},
\end{align*}
for $U \in \T_Y \MMC_p$, where we used the fact that $\inangle{\Proj_Y (Z), U} = \inangle{Z, U}$ for any $Z \in \R^{n \times p}, U \in \T_Y \MMC_p$.

\subsection{Necessary and sufficient conditions}
\label{subsec:necessary-sufficient-conditions}

Recall the definition of a local (and global) minimum:

\begin{definition}[Local/global minimum]
	Consider the program
	\begin{align*}
		\min_{x \in D} f(x)
	\end{align*}
	where $D \subseteq \R^d$. $x \in D$ is a local minimum if there exists $\epsilon > 0$ such that if $y \in D$ and $||y - x|| < \epsilon$, we have $f(x) \le f(y)$. $x \in D$ is a global minimum if $f(x) \le f(y)$ for all $y \in D$.
\end{definition}

The following are standard necessary conditions for local optimality, and correspond to the first and second-order critical point criteria that need to be satisfied by any local minimum of \eqref{eq:MC-BM} (see \cite[Prop 2.4]{BVB18}).

\begin{proposition}[First-order critical point {\cite[Def. 2.3]{BVB18}} and {\cite[Prop. 3]{WW20}}]
	\label{prop:MC-BM-first-order-crit}
	$Y \in \MMC_p$ is a first-order critical point for \eqref{eq:MC-BM} if and only if $\grad \obj(Y) = 2 (A - \diag(\nu)) Y = 0$. Equivalently, $Y \in \MMC_p$ is a first-order critical point if and only if there exists $\lambda \in \R^n$ such that $\inparen{A - \diag(\lambda)} Y = 0$. If such a $\lambda$ exists, it is unique and equal to $\nu$ given by \eqref{eq:nu}.
\end{proposition}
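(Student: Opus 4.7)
The proposition bundles three assertions: (i) first-order criticality is equivalent to $\grad \obj(Y) = 2(A - \diag(\nu))Y = 0$; (ii) it is also equivalent to the existence of some $\lambda \in \R^n$ with $(A - \diag(\lambda))Y = 0$; and (iii) any such $\lambda$ is unique and equals $\nu$. The first assertion is essentially definitional: first-order criticality on a Riemannian manifold means the Riemannian gradient vanishes, and the closed-form expression $\grad \obj(Y) = 2(A - \diag(\nu))Y$ has already been derived in \eqref{eq:nu}.

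For the second assertion, one direction is trivial: taking $\lambda := \nu$ certifies existence whenever $Y$ is critical. The converse follows from the uniqueness claim in (iii), since if any $\lambda$ satisfying $(A - \diag(\lambda))Y = 0$ must coincide with $\nu$, then the existence of such a $\lambda$ immediately gives $(A - \diag(\nu))Y = 0$, which by (i) is exactly the criticality condition. So the entire proposition reduces to proving claim (iii).

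To that end, suppose $(A - \diag(\lambda))Y = 0$. Reading off the $i$th row yields
\begin{align*}
	\sum_{j=1}^{n} A_{ij} Y_j = \lambda_i Y_i.
\end{align*}
Taking the Euclidean inner product of both sides with $Y_i \in \R^p$ and invoking the feasibility constraint $\|Y_i\|^2 = 1$ gives
\begin{align*}
	\lambda_i = \sum_{j=1}^{n} A_{ij} \inangle{Y_i, Y_j} = \nu_i,
\end{align*}
so $\lambda$ is forced to equal $\nu$ coordinatewise, which establishes both uniqueness and the identification $\lambda = \nu$.

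There is no real obstacle here; the only nontrivial ingredient beyond definitions and \eqref{eq:nu} is the row-wise unit-norm constraint defining $\MMC_p$, which guarantees $Y_i \neq 0$ and is precisely what allows the inner product with $Y_i$ to isolate the scalar $\lambda_i$ cleanly. An alternative route to the converse in (ii) would be to observe directly that $(A - \diag(\lambda))Y = 0$ makes the $i$th row of the Euclidean gradient $2AY$ parallel to $Y_i$, so projecting onto $\T_Y \MMC_p$ (which removes exactly the $Y_i$ component of row $i$) yields zero; the uniqueness-based argument above is slightly shorter and proves both remaining claims simultaneously.
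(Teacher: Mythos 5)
Your proof is correct. The paper does not prove this proposition itself (it defers to the cited results of \cite{BVB18} and \cite{WW20}), but your argument is the standard one and matches the mechanism already present in the paper's derivation of the Riemannian gradient in Appendix B, where the projection $\Proj_Y$ subtracts exactly the $\langle Z_i, Y_i\rangle Y_i$ component row-wise and the unit-norm constraint $\|Y_i\|^2=1$ pins down the multiplier as $\nu_i$.
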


\begin{proposition}[Second-order critical point {\cite[Prop. 4]{WW20}}]
	\label{prop:MC-BM-second-order-crit}
	A first-order critical point $Y \in \MMC_p$ is additionally a second-order critical point if and only if
	\begin{align*}
		\inangle{ \Hess \obj(Y)[U], U} = 2 \inangle{A - \diag(\nu), UU^\top} \ge 0
	\end{align*}
	for all $U \in \T_Y \MMC_p$, and where $\nu$ is given in \eqref{eq:nu}. (This is equivalent to $\Hess \obj(Y) \succeq 0$.)
\end{proposition}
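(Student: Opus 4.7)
The plan is to derive this proposition directly from two ingredients that are already in hand: the standard definition of a second-order critical point on a Riemannian manifold, and the explicit expression for the Hessian quadratic form $\inangle{\Hess \obj(Y)[U], U}$ computed in Section \ref{subsec:Riemannian-derivatives}.

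First, I would unpack the parenthetical claim that second-order criticality of a first-order critical point is equivalent to $\Hess \obj(Y) \succeq 0$. By the general theory of optimization on Riemannian submanifolds (see, e.g., \cite{Bou22}), a first-order critical point $Y$ is second-order critical precisely when the self-adjoint linear map $\Hess \obj(Y) : \T_Y \MMC_p \to \T_Y \MMC_p$ is positive semidefinite. By the spectral theorem for symmetric operators on a finite-dimensional real inner product space, this is in turn equivalent to $\inangle{\Hess \obj(Y)[U], U} \ge 0$ for every tangent vector $U \in \T_Y \MMC_p$.

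Next I would substitute in the formula for the Hessian quadratic form. Starting from $\Hess \obj(Y)[U] = 2 \Proj_Y\bigl((A - \diag(\nu)) U\bigr)$, and using that $\Proj_Y$ is an orthogonal projector and therefore self-adjoint, we have $\inangle{\Proj_Y(Z), U} = \inangle{Z, \Proj_Y(U)} = \inangle{Z, U}$ for every $U \in \T_Y \MMC_p$. Combined with the trace identity $\inangle{MU, U} = \tr(U^\top M U) = \tr(M UU^\top) = \inangle{M, UU^\top}$ applied to $M = A - \diag(\nu)$, this yields $\inangle{\Hess \obj(Y)[U], U} = 2 \inangle{A - \diag(\nu), UU^\top}$, exactly the equality in the proposition statement.

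No step is conceptually delicate; the substantive work was already carried out in the preliminaries when deriving the explicit forms of $\grad \obj$ and $\Hess \obj$ from the general Riemannian-submanifold formulae. The proposition is essentially a restatement of the PSD condition on $\Hess \obj(Y)$ after rewriting its quadratic form in trace-inner-product notation. The only point that merits explicit verification is that the orthogonal projector $\Proj_Y$ inside the Hessian formula can be dropped when pairing against a tangent vector $U$, which follows from self-adjointness of $\Proj_Y$ and the identity $\Proj_Y(U) = U$. Everything else is bookkeeping, so I do not expect any serious obstacle in writing out the proof formally.
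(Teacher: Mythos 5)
Your proposal is correct and matches the paper's treatment: the proposition is essentially definitional (second-order criticality \emph{means} $\Hess \obj(Y) \succeq 0$ on $\T_Y \MMC_p$), and the identity $\inangle{\Hess \obj(Y)[U], U} = 2\inangle{A - \diag(\nu), UU^\top}$ is derived in Section \ref{subsec:Riemannian-derivatives} by exactly the argument you give, namely dropping $\Proj_Y$ via $\inangle{\Proj_Y(Z), U} = \inangle{Z, U}$ for tangent $U$ and then applying the trace identity. The paper itself defers the formal statement to \cite[Prop.~4]{WW20} rather than writing out a proof, but your route is the intended one.
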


We say a critical point or local minimum is \textit{spurious} if it is not globally optimal. Finally, we characterize which first-order critical points of \eqref{eq:MC-BM} are globally optimal. Since second-order critical points and local minima are also first-order critical points, this also provides a characterization of optimality for them.

\begin{proposition}[Characterization of optimality for first-order critical points]
	\label{prop:char-first-order-opt}
	A first-order critical point $Y$ of \eqref{eq:MC-BM} is globally optimal if and only if $A - \diag(\nu) \succeq 0$, where $\nu$ is given in \eqref{eq:nu}.
\end{proposition}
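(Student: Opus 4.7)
The plan is to prove this via SDP strong duality applied to \eqref{eq:MC-SDP}. The dual program is
\[
\max_{\lambda \in \R^n} \mathbf{1}^\top \lambda \quad \text{s.t.} \quad A - \diag(\lambda) \succeq 0,
\]
and strong duality holds because $X = I$ is strictly feasible for \eqref{eq:MC-SDP}. The condition $A - \diag(\nu) \succeq 0$ is precisely dual feasibility of $\nu$, so the proposition amounts to a KKT-type statement: at a first-order critical $Y$, dual feasibility of the induced multiplier $\nu$ is equivalent to $X = YY^\top$ being optimal for \eqref{eq:MC-SDP}, and hence to $Y$ being globally optimal for \eqref{eq:MC-BM} (every BM-feasible $Y$ gives an SDP-feasible $YY^\top$, so $\inangle{A, YY^\top}$ is at least the SDP optimum, and hitting the SDP optimum from above is exactly hitting the BM optimum).

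The central algebraic observation unifying both directions is that at a first-order critical $Y$, Proposition \ref{prop:MC-BM-first-order-crit} gives $(A - \diag(\nu))Y = 0$, hence $(A - \diag(\nu))YY^\top = 0$, and in particular $\inangle{A - \diag(\nu), YY^\top} = 0$. Using $(YY^\top)_{ii} = \norm{Y_i}^2 = 1$, this rearranges to $\inangle{A, YY^\top} = \mathbf{1}^\top \nu$, so the primal objective at $X = YY^\top$ matches the would-be dual objective at $\lambda = \nu$.

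For $(\Leftarrow)$: if $A - \diag(\nu) \succeq 0$, then $\nu$ is dual-feasible, and the objective-matching above combined with weak duality forces $X = YY^\top$ to attain the SDP optimum, and hence $Y$ to attain the BM optimum. For $(\Rightarrow)$: global optimality of $Y$ implies $X = YY^\top$ attains the SDP optimum, so strong duality produces a dual optimum $\lambda^*$ with $A - \diag(\lambda^*) \succeq 0$ and complementary slackness $\inangle{A - \diag(\lambda^*), YY^\top} = 0$. Setting $M := A - \diag(\lambda^*) \succeq 0$, the PSD matrix $Y^\top M Y$ has trace zero, so it vanishes, giving $M^{1/2} Y = 0$ and hence $MY = 0$. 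Thus $Y$ is first-order critical with multiplier $\lambda^*$, and the uniqueness clause of Proposition \ref{prop:MC-BM-first-order-crit} pins $\lambda^* = \nu$, yielding $A - \diag(\nu) \succeq 0$.

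The argument is a clean packaging of SDP duality, and I do not expect a substantial technical obstacle. The one step that warrants some care is extracting $MY = 0$ from $\inangle{M, YY^\top} = 0$ together with $M \succeq 0$; this is handled by the standard trace-zero-plus-PSD trick via the square root $M^{1/2}$, and it is precisely what lets us identify the strong-duality multiplier $\lambda^*$ with the first-order multiplier $\nu$ and close the converse direction.
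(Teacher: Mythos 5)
Your argument is a self-contained rendering of what the paper outsources to \cite{BVB18}: the paper's own ``proof'' of this proposition is a citation to Corollary~2.9 and Proposition~2.10 of that work plus the remark that Slater's condition (via $X = I$) gives strong duality. Your derivation of the dual, the identity $\inangle{A, YY^\top} = \mathbf{1}^\top \nu$ at a first-order critical point, the $(\Leftarrow)$ direction via weak duality, and the extraction of $MY = 0$ from $\inangle{M, YY^\top} = 0$ with $M \succeq 0$ are all correct, and the last step together with the uniqueness clause of Proposition~\ref{prop:MC-BM-first-order-crit} does close the identification $\lambda^* = \nu$.

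The gap is the opening step of $(\Rightarrow)$: ``global optimality of $Y$ implies $X = YY^\top$ attains the SDP optimum.'' This is not automatic. The feasible points of \eqref{eq:MC-BM} map onto only the rank-$\le p$ slice of the elliptope, so the optimal value of \eqref{eq:MC-BM} can strictly exceed that of \eqref{eq:MC-SDP}, and then a BM-optimal $Y$ need not be SDP-optimal. Concretely, take $n = 3$, $p = 1$, and $A$ the adjacency matrix of a triangle: every point of $\MMC_{3,1} = \{\pm 1\}^3$ is first-order critical (the tangent space is $\{0\}$), $Y = (1,1,-1)^\top$ is globally optimal for \eqref{eq:MC-BM} with value $-2$, yet the SDP optimum is $-3$; here $\nu = (0,0,-2)$ and $A - \diag(\nu)$ has a $2 \times 2$ principal minor equal to $-1$, so it is not PSD. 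Thus the ``only if'' direction, read literally for arbitrary $p$, is false, and your proof correctly cannot establish it without an extra ingredient. What rescues the step in this paper is that the proposition is only invoked at $p = n/2$ (and zero-padded versions thereof), where the Barvinok--Pataki bound guarantees an SDP-optimal solution of rank at most $p$ and hence equality of the two optimal values; the cited Proposition~2.10 of \cite{BVB18} sidesteps the issue entirely by taking ``$YY^\top$ is optimal for \eqref{eq:MC-SDP}'' as the hypothesis rather than ``$Y$ is optimal for \eqref{eq:MC-BM}.'' To make your proof airtight, either adopt that hypothesis or add the one-line justification that the SDP optimum is attained at rank $\le p$ in the regime where the proposition is applied.
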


While not framed precisely in this way, Proposition \ref{prop:char-first-order-opt} follows directly from prior work (see Appendix \ref{app:toolbox} for details). The proof involves a comparison between the criticality conditions of \eqref{eq:MC-BM} and the Karush–Kuhn–Tucker (KKT) conditions of \eqref{eq:MC-SDP}.



\subsection{Riemannian gradient descent}
\label{subsec:Riemannian gradient descent}

The analogue to gradient descent for optimizing over a smooth manifold is Riemannian gradient descent (see \cite[Ch. 4]{Bou22} for an introduction), which yields analogous analyses and guarantees. It includes as part of its specification a \textit{retraction} \cite[Def. 3.47]{Bou22}. A rectraction on $\MMC_p$ associates to each point $Y \in \MMC_p$ a map $\ret_Y : \T_Y \MMC_p \to \MMC_p$ which converts movement in the tangent space to movement on the manifold $\MMC_p$.  
We use the natural \textit{metric projection retraction} \cite[Sec. 5.12]{Bou22}, defined for $Y \in \MMC_p, U \in \T_Y \MMC_p$ as $\ret_Y (U) := \argmin_{Z \in \MMC_p} \norm{(Y + U) - Z}$.
With this definition, it is easy to see that $\ret_Y (U)$ is $Y + U$ followed by a normalization of each row. This yields the following Riemannian gradient descent algorithm for \eqref{eq:MC-BM}:

\textbf{Input:} Initializer $Y^{(0)} \in \MMC_p$, step size $\eta > 0$.

\textbf{For} $t = 0, 1, 2, \dots$
$$Y^{(t + 1)} = \ret_{Y^{(t)}} \inparen{- \eta  \grad \obj (Y^{(t)})}.$$

\subsection{Construction-specific definitions}
\label{subsec:other-defs}

Finally, we give a few miscellaneous technical definitions which will be used in our construction of a spurious local minimum for \eqref{eq:MC-BM}.

\begin{definition}[Axial position]
	\label{def:axial-position}
	We call the matrix
	\begin{align*}
		\yaxial :=
		\begin{bmatrix}
			I_{(n/2)} \\[3pt]
			- I_{(n/2)}
		\end{bmatrix} \in \MMC_{n, (n / 2)}
	\end{align*}
	the axial position, where $I_{n / 2}$ denotes the identity matrix in $\R^{\frac{n}{2} \times \frac{n}{2}}$.
\end{definition}

We use the term ``axial position" because when we view $\MMC_{n, (n / 2)}$ as a Cartesian product of $n$ unit spheres in $\R^{n/2}$, $\yaxial$ corresponds to placing a single unit vector on both the negative and positive sides of each axis in $\R^{n/2}$.

The following sets of matrices will be important in our construction:

\begin{definition}[Pseudo-PD, pseudo-PSD]
	\label{def:pseudo-PSD}
	We say a matrix $M \in \sym^{n \times n}$ is pseudo-PD (``pseudo-positive definite'') if $M[i] \succ 0$ for all $i \in [n]$, where $M[i] \in \sym^{(n - 1) \times (n - 1)}$ denotes the submatrix of $M$ formed by removing the $i$th row and column. Similarly, we say that $M \in \sym^{n \times n}$ is pseudo-PSD (``pseudo-positive semidefinite'') if $M[i] \succeq 0$ for all $i \in [n]$.
\end{definition}

\begin{definition}[Strictly pseudo-PD, strictly pseudo-PSD]
	\label{def:strictly-pseudo-PSD}
	We say a matrix is $M \in \sym^{n \times n}$ is strictly pseudo-PD if it is pseudo-PD but not positive semidefinite. We say $M$ is strictly pseudo-PSD if it is pseudo-PSD but not positive semidefinite. (Note that in both cases we require $M \nsucceq 0$, not $M \nsucc 0$.)
\end{definition}

Clearly every strictly pseudo-PD matrix is also strictly pseudo-PSD, but the converse turns out to be false.

\section{Main Claims and Outline}
\anote{Changed from: Proof of Theorem \ref{thm:main-result}}
\label{sec:main-results}
In this section we discuss the main claims and put them together to prove Theorem \ref{thm:main-result}. We focus on the case where \(n\) is even and $p = n / 2$ and construct cost matrices for which $\yaxial$ is a spurious local minimum, since constructions for $p < n / 2$ can be easily extracted from the former by padding with zeros. Before this, as a warm-up, we characterize those cost matrices for which $\yaxial$ is a spurious first and second-order critical point in the following two propositions. \anote{Added:} First-order and second-order criticality are necessary but not sufficient to establish that $\yaxial$ is a spurious local minima. 
While not strictly necessary for the proof of Theorem \ref{thm:main-result}, Propositions \ref{lem:Yaxial-first-order-crit} and \ref{lem:Yaxial-second-order-crit} have far simpler proofs (see Section \ref{sec:criticality-proofs}) and are interesting in their own right.

\begin{restatable}[First-order critical point characterization for $\yaxial$]{proposition}{firstordercriticality}
    \label{lem:Yaxial-first-order-crit}
	For \eqref{eq:MC-BM} when $p = n / 2$, the axial position $\yaxial$ is a first-order critical point if and only if the cost matrix $A$ takes the form
	\begin{align}
		\label{eq:Yaxial-first-order-obj}
		A =
		\begin{bmatrix}
			B & B \\
			B & B
		\end{bmatrix} + \diag(\alpha)
	\end{align}
	for some $\alpha \in \R^n$ and $B \in \sym^{\frac{n}{2} \times \frac{n}{2}}$. Furthermore, $\yaxial$ is additionally spurious if and only if $B \nsucceq 0$.
\end{restatable}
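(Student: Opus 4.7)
The plan is to apply the first-order critical point condition from Proposition \ref{prop:MC-BM-first-order-crit} directly: $\yaxial$ is first-order critical iff there exists $\nu \in \R^n$ such that $(A - \diag(\nu))\yaxial = 0$. I would set $m = n/2$, partition $A$ into $m \times m$ blocks
\[
A = \begin{bmatrix} A_{11} & A_{12} \\ A_{12}^\top & A_{22}\end{bmatrix},
\]
and split $\nu = (\nu_1, \nu_2) \in \R^m \times \R^m$. Since $\yaxial = \begin{bmatrix} I \\ -I \end{bmatrix}$, the equation $(A - \diag(\nu))\yaxial = 0$ decouples into $A_{11} - A_{12} = \diag(\nu_1)$ and $A_{22} - A_{12}^\top = \diag(\nu_2)$.

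From these two equations I would read off the constraints on $A$. The off-diagonal entries give $(A_{11})_{ij} = (A_{12})_{ij}$ and $(A_{22})_{ij} = (A_{12})_{ji}$ for all $i \ne j$; combined with the symmetry of $A_{11}$ and $A_{22}$, this forces $A_{12}$ to be symmetric, and all four blocks to agree off the diagonal. Setting $B := A_{12}$ (which is in $\sym^{m \times m}$) and letting $\alpha \in \R^n$ absorb the (unconstrained) diagonal differences then puts $A$ in the form claimed in \eqref{eq:Yaxial-first-order-obj}. Conversely, any $A$ of this form clearly satisfies $(A - \diag(\alpha))\yaxial = 0$, so $\yaxial$ is first-order critical with $\nu = \alpha$ (and by the uniqueness clause of Proposition \ref{prop:MC-BM-first-order-crit} this $\alpha$ indeed agrees with $\nu$ from \eqref{eq:nu}).

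For the spurious part, I would apply the characterization of global optimality in Proposition \ref{prop:char-first-order-opt}: $\yaxial$ is globally optimal iff $A - \diag(\nu) \succeq 0$. Substituting $\nu = \alpha$ into \eqref{eq:Yaxial-first-order-obj} gives
\[
A - \diag(\nu) = \begin{bmatrix} B & B \\ B & B \end{bmatrix} = \begin{bmatrix} I \\ I \end{bmatrix} B \begin{bmatrix} I & I \end{bmatrix},
\]
so $A - \diag(\nu) \succeq 0$ iff $B \succeq 0$ (the ``$\Leftarrow$'' direction by writing $B = CC^\top$ and conjugating, and ``$\Rightarrow$'' since $B$ is a principal submatrix). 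Hence $\yaxial$ is a spurious first-order critical point exactly when $B \nsucceq 0$.

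I do not anticipate a genuine obstacle here; the argument is essentially a block-matrix computation tied together by Propositions \ref{prop:MC-BM-first-order-crit} and \ref{prop:char-first-order-opt}. The only mildly delicate step is noticing that the two block equations, together with the symmetry of $A_{11}$ and $A_{22}$, force $A_{12}$ itself to be symmetric, which is what allows $B$ to be taken in $\sym^{m \times m}$ rather than merely in $\R^{m \times m}$.
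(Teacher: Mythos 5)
Your proposal is correct and follows essentially the same route as the paper: both reduce to the multiplier condition $(A-\diag(\nu))\yaxial=0$ of Proposition \ref{prop:MC-BM-first-order-crit}, extract the block structure of $A$ by a direct computation, and then invoke Proposition \ref{prop:char-first-order-opt} for the spuriousness claim. The only cosmetic difference is in showing $\begin{bmatrix} B & B \\ B & B\end{bmatrix}\succeq 0 \iff B\succeq 0$: you use the factorization $\begin{bmatrix} I \\ I\end{bmatrix}B\begin{bmatrix} I & I\end{bmatrix}$ together with the principal-submatrix observation, whereas the paper uses the spectrum of the Kronecker product $\begin{bmatrix}1 & 1\\ 1 & 1\end{bmatrix}\otimes B$; both are valid.
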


\begin{restatable}[Second-order critical point characterization for $\yaxial$]{proposition}{secondordercriticality}
	\label{lem:Yaxial-second-order-crit}
	For \eqref{eq:MC-BM} when $p = n / 2$, the axial position $\yaxial$ is a second-order critical point if and only if the cost matrix $A$ takes the form
	\begin{align*}
		A =
		\begin{bmatrix}
			P & P \\
			P & P
		\end{bmatrix} + \diag(\alpha)
	\end{align*}
	for some $\alpha \in \R^n$ and pseudo-PSD $P \in \sym^{\frac{n}{2} \times \frac{n}{2}}$. Furthermore, $\yaxial$ is additionally spurious if and only if $P$ is strictly pseudo-PSD.
\end{restatable}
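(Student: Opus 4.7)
The plan is to combine Proposition \ref{lem:Yaxial-first-order-crit} with a direct computation of the Hessian quadratic form at $\yaxial$. Since first-order criticality is necessary for second-order criticality, I restrict attention to cost matrices of the form $A = \begin{bmatrix} B & B \\ B & B \end{bmatrix} + \diag(\alpha)$ with $B \in \sym^{\frac{n}{2} \times \frac{n}{2}}$ and $\alpha \in \R^n$, and ask which additional constraint on $B$ is equivalent to $\inangle{A - \diag(\nu), UU^\top} \ge 0$ for all $U \in \T_{\yaxial} \MMC_{n, (n/2)}$.

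The first concrete step is to compute $\nu$ at $\yaxial$ via \eqref{eq:nu}. Because the rows of $\yaxial$ are $\pm e_i$ for $i \in [n/2]$, the Gram matrix is $\yaxial \yaxial^\top = \begin{bmatrix} I & -I \\ -I & I \end{bmatrix}$, and in each row $i$ of $A$, the contribution of the $B$ block at column $i$ exactly cancels the contribution of the mirror block at column $i \pm n/2$. This leaves $\nu = \alpha$, so $A - \diag(\nu) = \begin{bmatrix} B & B \\ B & B \end{bmatrix}$.

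Next I would unpack $\T_{\yaxial} \MMC_{n, (n/2)}$: writing $U = \begin{bmatrix} U_1 \\ U_2 \end{bmatrix}$ with $U_1, U_2 \in \R^{(n/2) \times (n/2)}$, the row-wise orthogonality condition from Proposition \ref{def:MC-BM-tangent-space} is exactly that the diagonals of $U_1$ and $U_2$ both vanish. Expanding $UU^\top$ in this block form and using the all-$B$ structure collapses the Hessian quadratic form to $\inangle{A - \diag(\nu), UU^\top} = \inangle{B, (U_1 + U_2)(U_1 + U_2)^\top}$. Setting $V := U_1 + U_2$ and noting that every zero-diagonal $V \in \R^{(n/2) \times (n/2)}$ is attainable (take $U_1 = V$, $U_2 = 0$), the second-order condition is equivalent to $\inangle{B, VV^\top} \ge 0$ over all zero-diagonal $V$.

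Finally I would decouple this condition by columns: $\inangle{B, VV^\top} = \sum_{k=1}^{n/2} v^{(k)\top} B \, v^{(k)}$, where $v^{(k)} \in \R^{n/2}$ is the $k$th column of $V$ and the zero-diagonal constraint forces $(v^{(k)})_k = 0$. Since the $v^{(k)}$ vary independently, nonnegativity is equivalent to $v^\top B v \ge 0$ for every $k$ and every $v \in \R^{n/2}$ with $v_k = 0$, which is exactly the statement that each principal submatrix $B[k] \succeq 0$, i.e., $B$ is pseudo-PSD; renaming $P := B$ gives the claimed form. For the spurious clause, Proposition \ref{prop:char-first-order-opt} says $\yaxial$ is globally optimal iff $\begin{bmatrix} P & P \\ P & P \end{bmatrix} \succeq 0$, and the identity $\begin{bmatrix} x \\ y \end{bmatrix}^\top \begin{bmatrix} P & P \\ P & P \end{bmatrix} \begin{bmatrix} x \\ y \end{bmatrix} = (x+y)^\top P (x+y)$ shows this holds iff $P \succeq 0$. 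Hence $\yaxial$ is spurious exactly when $P$ is strictly pseudo-PSD. The only genuine subtlety is the surjectivity observation $V = U_1 + U_2$ that reduces the tangent-space quadratic form to a condition on a single zero-diagonal matrix; everything else is block-matrix bookkeeping.
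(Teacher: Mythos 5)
Your proposal is correct and follows essentially the same route as the paper: compute $\nu = \alpha$, reduce the Hessian quadratic form over the tangent space to $\inangle{B,(U_1+U_2)(U_1+U_2)^\top}$ with $U_1,U_2$ zero-diagonal, observe that $U_1+U_2$ ranges over all zero-diagonal matrices, and decouple by columns to land on the pseudo-PSD condition. The only (cosmetic) difference is in the spurious clause, where you verify $\begin{bmatrix} P & P \\ P & P \end{bmatrix} \succeq 0 \iff P \succeq 0$ via the quadratic-form identity $(x+y)^\top P(x+y)$ rather than the Kronecker-product spectrum argument the paper reuses from its first-order proposition.
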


It is not surprising that Propositions \ref{lem:Yaxial-first-order-crit} and \ref{lem:Yaxial-second-order-crit} allow you to arbitrarily change the diagonal of the cost matrix. Doing so simply corresponds to adding a constant to the objective value at each point and does not change the geometry of the problem.\footnote{One can easily check that the Riemannian derivatives at any point $Y \in \MMC_p$ remain unchanged since $\nu$ in \eqref{eq:nu} will act as an offset.}

Next, the following lemma provides a sufficient condition for $\yaxial$ to be a spurious local minimum.  This is the most challenging of our results to prove, and we give the proof and discuss the challenges involved in Section \ref{sec:local-minimum}.

\begin{restatable}[Local minimum condition for $\yaxial$]{lemma}{localmin}
	\label{lem:Yaxial-local-min}
	For \eqref{eq:MC-BM} when $p = n / 2$, the axial position $\yaxial$ is a local minimum if the cost matrix $A$ takes the form
	\begin{align}
		\label{eq:Yaxial-local-min-obj}
		A =
		\begin{bmatrix}
			M & M \\
			M & M
		\end{bmatrix} + \diag(\alpha)
	\end{align}
	for some $\alpha \in \R^n$ and pseudo-PD $M \in \sym^{\frac{n}{2} \times \frac{n}{2}}$.
	Furthermore, $\yaxial$ is additionally spurious if $M$ is strictly pseudo-PD.
\end{restatable}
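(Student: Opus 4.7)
The plan is to directly lower-bound $\obj(Y) - \obj(\yaxial)$ for all $Y$ in some Frobenius neighborhood of $\yaxial$. Since $\inangle{\diag(\alpha), YY^\top} = \sum_i \alpha_i$ is constant on $\MMC_{n, n/2}$, I may assume $\alpha = 0$. A short computation using the block form of $A$ then yields the identity
\[
	\obj(Y) - \obj(\yaxial) = \inangle{M, ZZ^\top}, \qquad Z := Y^{(1)} + Y^{(2)},
\]
where $Y^{(1)}, Y^{(2)} \in \R^{(n/2) \times (n/2)}$ are the top and bottom halves of $Y$. Observe that $Z = 0$ at $\yaxial$ and, more generally, on the entire anti-symmetric submanifold $\{Y : Y^{(2)} = -Y^{(1)}\}$, on which the objective is \emph{exactly} $\obj(\yaxial)$. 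This submanifold is tangent to the kernel of $\Hess \obj(\yaxial)$, so establishing local minimality reduces to showing $\inangle{M, ZZ^\top} \ge 0$ in a neighborhood of $\yaxial$, even though $M$ itself is indefinite.

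Next, I would rewrite $\inangle{M, ZZ^\top} = \sum_k Z_{\cdot, k}^\top M Z_{\cdot, k}$ and, for each $k$, decompose $Z_{\cdot, k} = \tilde Z_{\cdot, k} + Z_{kk} e_k$ with $\tilde Z_{\cdot, k}$ having zero in coordinate $k$. Expanding, the $k$-th summand equals $\tilde Z_{\cdot, k}^\top M \tilde Z_{\cdot, k} + 2 Z_{kk}(Me_k)^\top \tilde Z_{\cdot, k} + Z_{kk}^2 M_{kk}$. Since $M$ is pseudo-PD, each principal submatrix $M[k]$ is strictly PD, so the leading term is bounded below by $\lambda_{\min}(M[k]) \|\tilde Z_{\cdot, k}\|^2$; summing over $k$ gives a total contribution of at least $\lambda \|\tilde Z\|_F^2$, where $\lambda := \min_k \lambda_{\min}(M[k]) > 0$.

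The main obstacle is controlling the two remaining (possibly negative) terms using the unit-sphere constraint $\|Y_i\| = 1$. Writing $Y^{(s)}_{kk} = \pm\sqrt{1 - \sum_{j \ne k}(Y^{(s)}_{kj})^2}$ for $s = 1, 2$ (with signs fixed by proximity to $\yaxial$), factoring the resulting difference of square roots using $a^2 - b^2 = (a+b)(a-b)$ coordinatewise, and applying Cauchy--Schwarz yields the key estimate $|Z_{kk}| \le \|\tilde Z_k\| \cdot \|D_k\|$, where $\tilde Z_k$ is the $k$-th off-diagonal row of $Z$ and $D_k$ is the $k$-th off-diagonal row of $Y^{(1)} - Y^{(2)}$. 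For $Y$ within Frobenius distance $\delta$ of $\yaxial$, $\|D_k\| = O(\delta)$, and summing over $k$ gives $\|\diag(Z)\|_F \le O(\delta)\, \|\tilde Z\|_F$. Plugging back and applying Cauchy--Schwarz to the cross term,
\[
	\obj(Y) - \obj(\yaxial) \ge \lambda \|\tilde Z\|_F^2 - O(\delta) \|\tilde Z\|_F^2 - O(\delta^2) \|\tilde Z\|_F^2 \ge \frac{\lambda}{2} \|\tilde Z\|_F^2 \ge 0
\]
for $\delta$ small enough, proving local minimality. For spuriousness, if $M$ is strictly pseudo-PD then $M$ is pseudo-PSD with $M \nsucceq 0$, i.e., strictly pseudo-PSD, so Proposition \ref{lem:Yaxial-second-order-crit} immediately yields that $\yaxial$ is spurious. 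The crux is the sphere-constraint estimate, which forces the kernel directions of $\Hess \obj(\yaxial)$ (along which $\tilde Z$ vanishes to leading order) to automatically satisfy $\diag(Z) = 0$ as well, ruling out the third-order descent that indefiniteness of $M$ might otherwise allow.
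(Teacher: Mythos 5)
Your proposal is correct, and it takes a genuinely different---and substantially more direct---route than the paper. The paper proves this lemma dynamically: it runs Riemannian gradient descent from any nearby point, shows the potential $\Phi(Y) = \|Y^{(1)}+Y^{(2)}\|^2$ decreases geometrically (Lemma \ref{lem:decrease-in-potential}), that the iterates form a Cauchy sequence confined to a neighborhood (Lemmas \ref{lem:iterates-are-Cauchy} and \ref{lem:Riemannian-grad-descent-stays-close}) converging to an antipodal configuration with objective value $\obj(\yaxial)$, and that the objective is monotone nonincreasing along the iterates (Lemma \ref{lem:obj-nonincreasing}); local minimality then follows by chaining these. You bypass the dynamics entirely via the exact identity $\obj(Y)-\obj(\yaxial)=\inangle{M, ZZ^\top}$ with $Z=Y^{(1)}+Y^{(2)}$, and establish the pointwise lower bound $\inangle{M, ZZ^\top}\ge \tfrac{\lambda}{2}\|\tilde Z\|_F^2\ge 0$ near $\yaxial$. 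The two arguments do share their crux: your estimate $|Z_{kk}|\le \|\tilde Z_k\|\,\|D_k\| = O(\delta)\|\tilde Z_k\|$ is exactly the paper's key bound \eqref{eq:key-bound} (proved there via Lemmas \ref{lem:close-to-orthogonal} and \ref{lem:reverse-triangle-squares}; your difference-of-square-roots derivation is an equivalent use of the sphere constraint), and the splitting of each column of $Z$ into its off-diagonal part (seen only by the positive definite $M[k]$) plus a second-order-small diagonal entry mirrors the paper's treatment of the terms \circled{1} and \circled{2}. What your approach buys is brevity and an explicit quantitative statement---quadratic growth of the objective in the non-antipodal component $\tilde Z$, with exact flatness along antipodal directions---which makes transparent why the degeneracy of the Hessian (Appendix \ref{app:challenges}) is harmless here. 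The one point worth stating explicitly is that the denominator $\sqrt{1-a}+\sqrt{1-b}$ arising in your square-root factorization is bounded below by $1$ for $\delta$ small, but this is immediate from proximity to $\yaxial$. The spuriousness claim via Proposition \ref{lem:Yaxial-second-order-crit} (or equivalently the paper's route via Proposition \ref{lem:Yaxial-first-order-crit}) is fine.
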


Actualizing Lemma \ref{lem:Yaxial-local-min} to construct a spurious local minimum requires the existence of strictly pseudo-PD matrices, which we posit in the following lemma:

\begin{restatable}[Existence of strictly pseudo-PD matrices]{lemma}{pseudopdexist}
	\label{prop:existence-of-Mbad}
	The set of $k \times k$ strictly pseudo-PD matrices is nonempty for any $k \ge 2$.
\end{restatable}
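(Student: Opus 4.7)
My approach is to exhibit an explicit family of symmetric $k \times k$ matrices of the ``equicorrelated'' form $M = a I + b (J - I)$, where $J$ denotes the all-ones matrix, and to tune the scalars $a, b \in \R$ so that $M$ is strictly pseudo-PD. The advantage of this one-parameter Ansatz is twofold: the spectrum of $M$ is available in closed form, and every principal $(k-1) \times (k-1)$ submatrix $M[i]$ has exactly the same equicorrelated structure, so the entire verification reduces to analyzing the spectra of two matrices of this type.

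First I would record the spectra. The vector $\mathbf{1}$ is an eigenvector of $M$ with eigenvalue $a + (k-1) b$, and on its orthogonal complement $M$ acts as $(a - b) I$, giving the eigenvalue $a - b$ with multiplicity $k - 1$. Applying the same calculation to $M[i]$ (the same form with $k$ replaced by $k - 1$) yields eigenvalues $a + (k-2) b$ and $a - b$ (the latter with multiplicity $k - 2$). So $M[i] \succ 0$ for all $i$ is equivalent to $a + (k-2) b > 0$ and $a - b > 0$, while $M \nsucceq 0$ is equivalent to $a + (k-1) b < 0$ or $a - b < 0$.

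Next I would pick the parameters. Set $a = 1$. For $k = 2$ the only constraint from the $1 \times 1$ submatrices is positivity of the diagonal, which is automatic, so $b = 2$ works: $M = \bigl(\begin{smallmatrix} 1 & 2 \\ 2 & 1 \end{smallmatrix}\bigr)$ has eigenvalues $3$ and $-1$. For $k \ge 3$ I would choose $b$ satisfying $a + (k-2) b > 0$, $a - b > 0$, and $a + (k-1) b < 0$. With $a = 1$ the middle inequality holds automatically for any negative $b$, and the other two collapse to $-\tfrac{1}{k-2} < b < -\tfrac{1}{k-1}$, an interval that is nonempty for every $k \ge 3$. Any value in this interval, for instance the midpoint $b = -\tfrac{2k-3}{2(k-1)(k-2)}$, makes $M$ strictly pseudo-PD by construction.

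The argument is essentially a direct verification once the eigenvalues are written down, so I do not anticipate a genuine obstacle; the only delicate point is that the admissible interval for $b$ has width $O(1/k^2)$, so for large $k$ the strictly pseudo-PD property is quantitatively tight but still achievable. If one wanted a cleaner writeup, the two cases $k = 2$ and $k \ge 3$ can be unified by noting that for $k = 2$ the condition ``$a + (k-2) b > 0$'' degenerates to $a > 0$, which is already imposed.
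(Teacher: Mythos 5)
Your proposal is correct and essentially coincides with the paper's deterministic construction (the ``almost-average matrix'' of Definition \ref{def:almost-average-mat}), which is exactly an equicorrelated matrix with $a = 1$ and off-diagonal entry $-\tfrac{1}{k-1.5}$, a value lying in your admissible interval $\bigl(-\tfrac{1}{k-2}, -\tfrac{1}{k-1}\bigr)$. The only cosmetic difference is that the paper certifies $M[i] \succ 0$ via strict diagonal dominance and exhibits the all-ones eigenvector for $M \nsucceq 0$, while you compute the full spectrum directly; the paper also gives a second, probabilistic construction with nonnegative entries, but that is not needed for the lemma itself.
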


We provide constructions of strictly pseudo-PD matrices in Section \ref{sec:bad-matrices-constructions}. Our main nonnegative construction takes the form $UU^\top - \epsilon I_k$, where $U \in \R^{k \times (k - 1)}$ is a random matrix. It is shown $(UU^\top)[i] \succ 0$ for all $i$ with high probability. One can show that when $\epsilon > 0$ is sufficiently small, the $(k - 1) \times (k - 1)$ principal submatrices of $UU^\top - \epsilon I_k$ remain positive definite, while $UU^\top - \epsilon I_k \nsucceq 0$ since $UU^\top$ is rank-deficient.

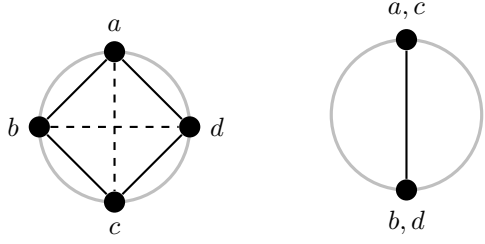
\begin{figure}
	\centering
	\begin{tikzpicture}[scale=0.5]
		\tikzstyle{subj} = [circle, minimum width=8pt, fill, inner sep=0pt]

		\draw[color=gray!50, very thick] (0, 0) circle (2);

		\node[subj, label=above:$a$] (a) at (0, 2) {};
		\node[subj, label=left:$b$] (b) at (-2, 0) {};
		\node[subj, label=below:$c$] (c) at (0, -2) {};
		\node[subj, label=right:$d$] (d) at (2, 0) {};

		\draw[thick] (a) to (b);
		\draw[thick] (b) to (c);
		\draw[thick] (c) to (d);
		\draw[thick] (a) to (d);
		\draw[thick, dashed] (a) to (c);
		\draw[thick, dashed] (b) to (d);
	\end{tikzpicture} \hspace{3em}
	\begin{tikzpicture}[scale=0.5]
		\tikzstyle{subj} = [circle, minimum width=8pt, fill, inner sep=0pt]

		\draw[color=gray!50, very thick] (0, 0) circle (2);

		\node[subj, label=above:{$a,c$}] (a) at (0, 2) {};
		\node[subj, label=below:{$b,d$}] (b) at (0, -2) {};

		\draw[thick] (a) to (b);
	\end{tikzpicture}

	\caption{A spurious minimum and corresponding global minimum for \(n=4, p=2\).}
	\label{fig:n4p2localmin}
\end{figure}

We use Lemmas \ref{lem:Yaxial-local-min} and \ref{prop:existence-of-Mbad} to prove Theorem \ref{thm:main-result}:

\begin{proofof}{Theorem \ref{thm:main-result}}
	Lemmas \ref{lem:Yaxial-local-min} and \ref{prop:existence-of-Mbad} imply that for even $n$ such that $n \ge 4$, there exists an instance of \eqref{eq:MC-BM} with a spurious local minimum when $p = n / 2$.
	To construct a spurious local minimum for $p < n/2$, we can simply use the construction for \(n' = 2p\) vertices and rank \(p\).  The entries of the cost matrix that correspond to additional rows of elements of $\MMC_{n, p}$ past \(n'\) can be set to 0, ensuring that these additional rows cannot affect the objective value. (See Appendix \ref{app:extending-to-p<n/2} for details.)
\end{proofof}

One instantiation of our construction for \(n = 4, p = 2\) is illustrated in Figure \ref{fig:n4p2localmin}.  In this visualization, we think of the nonnegative cost matrix as the adjacency matrix of a weighted graph, which is natural for \eqref{eq:MC-BM}.  Each row of \(Y\) specifies the position of one of the vertices on the unit sphere in $\R^p$ (for \(p = 2\), the circle shown in gray).  We illustrate ``heavy'' (higher weight) edges with solid lines, and ``light'' (lower weight) edges with dashed lines.  Intuitively, each edge ``pushes" its endpoints away from each other, with heavier edges pushing harder.  We see by symmetry that this state is in equilibrium (each vertex is pushed equally clockwise and counter-clockwise), so the gradient is 0.  Showing that this instance is indeed a local minimum is more involved, and requires arguing that if the vertices were perturbed slightly, the heavy edges would still approximately cancel out, and the main force on the vertices would be the light edges pushing the pairs \(a, c\) and \(b, d\) back to being diametrically opposite.\footnote{We provide an interactive visualization for the \(p = 2\) and \(p = 3\) cases at\\ \url{https://vaidehi8913.github.io/burer-monteiro}.}

\section{Warmup: proofs of first-order and second-order criticality} 
\label{sec:criticality-proofs}
\anote{Removed: (Propositions \ref{lem:Yaxial-first-order-crit} and \ref{lem:Yaxial-second-order-crit}). Added it to text in the start of the section.}
\anote{Please read this text and verify.}
In this section we give the proofs of Propositions \ref{lem:Yaxial-first-order-crit} and \ref{lem:Yaxial-second-order-crit}, which characterize cost matrices for which $\yaxial$ is first order critical and second order critical respectively. These propositions together motivate the pseudo-PSD property of cost matrices that are crucial in our construction. The reader is welcome to skip ahead to Section~\ref{sec:local-minimum} for the proof of the main lemma about $\yaxial$ being a spurious local minimum.


\anote{Since there is no page restriction, should we paste the claim here again?} \vnote{resolved}

\firstordercriticality*

\begin{proofof}{Proposition \ref{lem:Yaxial-first-order-crit}}
	We first prove the first half of Proposition \ref{lem:Yaxial-first-order-crit}, which characterizes when $\yaxial$ is a first-order critical point.
	If $A$ takes the form \eqref{eq:Yaxial-first-order-obj} for some $\alpha \in \R^n$ and $B \in \sym^{\frac{n}{2} \times \frac{n}{2}}$, we can set $\lambda \gets \alpha$ where $\lambda$ is our choice for the multiplier from Proposition \ref{prop:MC-BM-first-order-crit}. Observe then that $(A - \diag(\lambda)) \yaxial = 0$,
	implying $\yaxial$ is indeed a first-order critical point. For the other direction, suppose that $\yaxial$ is a first-order critical point with associated multiplier $\lambda$ (from Proposition \ref{prop:MC-BM-first-order-crit}), and consider the matrix $S := A - \diag(\lambda) \in \sym^{n \times n}$, which can be expressed in the block form
	\begin{align*}
		S =
		\begin{bmatrix}
			S_1      & S_2 \\
			S_2^\top & S_3
		\end{bmatrix}
	\end{align*}
	for some $S_1, S_3 \in \sym^{\frac{n}{2} \times \frac{n}{2}}$ and $S_2 \in \R^{\frac{n}{2} \times \frac{n}{2}}$. Then $S \yaxial = 0$ implies $S_1 - S_2 = 0$ and $S_2^\top - S_3 = 0$, and thus $S_1 = S_2 = S_3$. Thus, $A = S + \diag(\lambda)$ indeed takes the form \eqref{eq:Yaxial-first-order-obj}.

	Now we prove the second half of Proposition \ref{lem:Yaxial-first-order-crit}: the characterization of when $\yaxial$ is a spurious first-order critical point. Supposing that the cost matrix takes the form \eqref{eq:Yaxial-first-order-obj} for some $\alpha \in \R^n$ and $B \in \sym^{\frac{n}{2} \times \frac{n}{2}}$ (as we've shown is necessary for $\yaxial$ to be a first-order critical point), we show that $\yaxial$ is additionally spurious if and only if $B \nsucceq 0$. Recall from above that the unique multiplier $\lambda \in \R^n$ (from Proposition \ref{prop:MC-BM-first-order-crit}) associated with $\yaxial$ is precisely $\alpha$. Then it follows from Proposition \ref{prop:char-first-order-opt} that $\yaxial$ is spurious if and only if
	\begin{align*}
		A - \diag(\lambda) =
		\begin{bmatrix}
			B & B \\
			B & B
		\end{bmatrix}
	\end{align*}
	is not positive semidefinite. (Recall from Proposition \ref{prop:MC-BM-first-order-crit} that $\nu = \lambda$ at a first-order critical point.) We claim
	\begin{align}
		\label{eq:block-matrix-PSD}
		\begin{bmatrix}
			B & B \\
			B & B
		\end{bmatrix} = \begin{bmatrix}
			1 & 1 \\
			1 & 1
		\end{bmatrix} \otimes B \nsucceq 0 \quad \iff  \quad B \nsucceq 0,
	\end{align}
	where $\otimes$ denotes the Kronecker product. Indeed, this follows because the spectrum of $F \otimes G$, denoted $\sigma(F \otimes G)$, for two square, real matrices $F, G$ is given by
	\begin{align*}
		\sigma(F \otimes G) = \inbraces{\lambda \mu : \lambda \in \sigma(F), \mu \in \sigma(G)}.
	\end{align*}
	This, combined with the fact that $\begin{bmatrix}
			1 & 1 \\
			1 & 1
		\end{bmatrix} \succeq 0$, implies \eqref{eq:block-matrix-PSD}.
\end{proofof}

As for the second proposition:

\secondordercriticality*

\begin{proofof}{Proposition \ref{lem:Yaxial-second-order-crit}}
	We first prove the first half of Proposition \ref{lem:Yaxial-second-order-crit}, which characterizes when $\yaxial$ is a second-order critical point.
	Since any second-order critical point is also a first-order critical point, the first half of Proposition \ref{lem:Yaxial-first-order-crit} implies it is necessary for the cost matrix $A$ to take the form \eqref{eq:Yaxial-first-order-obj} for some $\alpha \in \R^n$ and $B \in \sym^{\frac{n}{2} \times \frac{n}{2}}$ for $\yaxial$ to be a second-order critical point. We will show that $\yaxial$ is additionally a second-order critical point if and only if the matrix $B$ from \eqref{eq:Yaxial-first-order-obj} is pseudo-PSD.

	To this end, recall from the proof of Proposition \ref{lem:Yaxial-first-order-crit} that the unique multiplier $\lambda$ associated with $\yaxial$ when $A$ takes the form \eqref{eq:Yaxial-first-order-obj} is precisely $\alpha$. Then, writing
	\begin{align}
		\label{eq:S}
		S := A - \diag(\lambda) = \begin{bmatrix}
			B & B \\
			B & B
		\end{bmatrix},
	\end{align}
	clearly the condition for the second-order criticality of $\yaxial$ (Proposition \ref{prop:MC-BM-second-order-crit}) is equivalent to
	\begin{align}
		\label{eq:second-order-crit-cond}
		\inangle{S, UU^\top} \ge 0 \quad \text{for $U \in \T_\yaxial \MMC_{n/2}$}.
	\end{align}
	Define
	\begin{align*}
		\off := \inbraces{ G \in \R^{\frac{n}{2} \times \frac{n}{2}} : \diag(G) = 0 }
	\end{align*}
	to be the subspace of $\R^{\frac{n}{2} \times \frac{n}{2}}$ consisting of matrices with zeros on their diagonals. Note then that $\T_\yaxial \MMC_{n/2}$ is precisely
	\begin{align}
		\label{eq:tangent-space-Yaxial}
		\T_\yaxial \MMC_{n/2} = \inbraces{
			\begin{bmatrix}
				U_1 \\
				U_2
			\end{bmatrix}
			: U_1, U_2 \in \off
		}.
	\end{align}
	In other words, $\T_\yaxial \MMC_{n/2}$ is the set of all matrices $U \in \R^{n \times \frac{n}{2}}$ such that $U_{ii} = U_{(n / 2 + i), i} = 0$ for all $i \in [n / 2]$, and the other entries are completely arbitrary. Observe that \eqref{eq:S} and \eqref{eq:tangent-space-Yaxial} imply the second-order criticality condition \eqref{eq:second-order-crit-cond} is equivalent to
	\begin{align}
		\label{eq:second-order-crit-cond-2}
		\inangle{B, (U_1 + U_2)(U_1 + U_2)^\top} \ge 0 \quad \text{for $U_1, U_2 \in \off$}.
	\end{align}
	Next, note that \eqref{eq:second-order-crit-cond-2} (and therefore \eqref{eq:second-order-crit-cond}) is equivalent to
	\begin{align}
		\label{eq:second-order-crit-cond-3}
		\inangle{B, G G^\top} \ge 0 \quad \text{for $G \in \off$}
	\end{align}
	since $\off$ is closed under addition.

	Thus, we have shown at this point that $\yaxial$ is a second-order critical point if and only if \eqref{eq:second-order-crit-cond-3} holds. Now let $T_i$ for $i \in [n / 2]$ denote the $(n / 2 - 1)$-dimensional subspace of $\R^{n / 2}$ obtained by fixing the $i$th entry to be 0 and letting all other entries vary arbitrarily. Observe that
	\begin{align*}
		\inbraces{GG^\top : G \in \off} = \inbraces{ \sum_{k = 1}^{n / 2} v_k v_k^\top : v_i \in T_i \text{ for all $i \in [n / 2]$} }.
	\end{align*}
	Thus, we can reexpress the second-order criticality condition \eqref{eq:second-order-crit-cond-3} as follows:
	\begin{align}
		     & \inangle{B, G G^\top} \ge 0 \quad \text{for $G \in \off$}  \nonumber                                                           \\
		\iff & \inangle{B, \sum_{k = 1}^{n / 2} v_k v_k^\top} \ge 0 \quad \text{for $v_1, \in T_1, \dots, v_{n / 2} \in T_{n / 2}$} \nonumber \\
		\iff & \inangle{B, v v^\top} \ge 0 \quad \text{for $v \in T_1 \cup \dots \cup T_{n / 2}$}. \label{eq:second-order-crit-cond-4}
	\end{align}
	\eqref{eq:second-order-crit-cond-4} is equivalent to $B$ being pseudo-PSD.

	As for the second half of Proposition \ref{lem:Yaxial-second-order-crit}, the characterization of when $\yaxial$ is a spurious second-order critical point, this follows immediately from the first half of Proposition \ref{lem:Yaxial-second-order-crit} and the characterization of when $\yaxial$ is a spurious first-order critical point from Proposition \ref{lem:Yaxial-first-order-crit} (since all second-order critical points are also first-order critical points).
\end{proofof}

\section{Proof of local minimality (Lemma~\ref{lem:Yaxial-local-min})}
\label{sec:local-minimum}
\anote{I like the way this section is structured! }

In this section we give the proof of Lemma \ref{lem:Yaxial-local-min}. Section \ref{subsec:challenges-sublemmas-Lemma-3} discusses the challenges involved, states the necessary sublemmas, and concludes with the proof of Lemma \ref{lem:Yaxial-local-min}. Section \ref{subsec:conv-to-same-obj-value} gives the proof of the most important sublemma (Lemma \ref{lem:conv-to-point-with-same-obj}).  

\subsection{Challenges, key sublemmas, and the proof of Lemma \ref{lem:Yaxial-local-min}}
\label{subsec:challenges-sublemmas-Lemma-3}

\paragraph{Challenges.}
Unfortunately, arguing about the value of the objective function at some point \(Y\) near $\yaxial$ is challenging, and classical techniques for proving that $\yaxial$ is a local minimum fail. For example, \cite{WW20}, which constructs spurious local minima for \eqref{eq:MC-BM} when $p < \sqrt{2n}$, similarly first constructs spurious second-order critical points and then proves that they are additionally local minima. However, their proof follows because their spurious second-order critical points are non-degenerate \cite[Def. 3]{WW20}, which corresponds to the rank of the Riemannian Hessian being sufficiently high. We show in Appendix \ref{app:challenges} (Proposition \ref{prop:degenerate-over-Barvinok-Pataki}) that for \textit{any} instance of \eqref{eq:MC-BM} when $p \ge \sqrt{2n}$, \textit{all} spurious second-order critical points are degenerate, meaning this approach will not work. Furthermore, there is no hope of using the positivity of a higher-order Riemannian derivative (e.g., the fourth derivative) to prove that $\yaxial$ is a local minimum, since it can be shown that all higher-order derivatives are degenerate. (See Appendix \ref{app:challenges} for further discussion of these challenges.)

\paragraph{Overview and key sublemmas.} Thus, we provide a novel approach involving Riemannian gradient descent (Section \ref{subsec:Riemannian gradient descent}). For the first sublemma below, recall that a neighborhood of a point $Y \in \MMC_p$ is a set of the form $\{ Y' \in \MMC_p : ||Y - Y'|| < \epsilon \}$ for some $\epsilon > 0$. The proof is given in Section \ref{subsec:conv-to-same-obj-value}.

\begin{lemma}[Convergence to a point with the same objective value]
	\label{lem:conv-to-point-with-same-obj}
	In the setting of Lemma \ref{lem:Yaxial-local-min} with pseudo-PD $M \in \sym^{\frac{n}{2} \times \frac{n}{2}}$,
	there exists a neighborhood $N \subseteq \MMC_{n / 2}$ of $\yaxial$ and $\etaprime > 0$ depending only on the instance of \eqref{eq:MC-BM} such that if you initialize Riemannian gradient descent (as specified in Section \ref{subsec:Riemannian gradient descent}) with any step size $\eta < \etaprime$ at any point $\yinitial \in N$, it converges to a point $\yconv$ such that $\obj(\yconv) = \obj(\yaxial)$.
\end{lemma}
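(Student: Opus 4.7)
The plan is to combine a characterization of the first-order critical points of \eqref{eq:MC-BM} that lie near $\yaxial$ with standard convergence theory for Riemannian gradient descent. The starting observation is structural: with $A$ as in \eqref{eq:Yaxial-local-min-obj}, writing $Y = \begin{bmatrix} Y_1 \\ Y_2 \end{bmatrix}$ with $Y_1, Y_2 \in \R^{(n/2) \times (n/2)}$ and setting $S := Y_1 + Y_2$, the objective decouples as $\obj(Y) = \inangle{M, SS^\top} + C$ for the constant $C = \sum_i \alpha_i$. In particular $\obj(\yaxial) = C$ because $S = 0$ at $\yaxial$, so the target equation $\obj(\yconv) = \obj(\yaxial)$ is equivalent to $\inangle{M, S(\yconv)\,S(\yconv)^\top} = 0$.

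The heart of the argument is to show that every first-order critical point $Y$ in a small enough neighborhood of $\yaxial$ already satisfies $\obj(Y) = \obj(\yaxial)$. Expanding $(A - \diag(\nu))Y = 0$ row by row, the top half yields $(MS)_i = \beta_i (Y_1)_i$ and the bottom half yields $(MS)_i = \gamma_i (Y_2)_i$, where $\beta_i, \gamma_i$ are scalars determined by $\nu$ and $\alpha$. Row by row, either $\beta_i \neq 0$ (forcing $(Y_1)_i = \pm(Y_2)_i$ since both are unit vectors, and the $+$ case is ruled out by proximity to $\yaxial$, so $S_i = 0$) or $\beta_i = \gamma_i = 0$ (so $(MS)_i = 0$). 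Let $T^c \subseteq [n/2]$ collect the rows of the second type. Since $S_j = 0$ for $j \notin T^c$, the relations $(MS)_i = 0$ for $i \in T^c$ collapse to $M_{T^c, T^c} \, S_{T^c} = 0$. If $T^c \subsetneq [n/2]$, then for any $i_0 \notin T^c$ the block $M_{T^c, T^c}$ is a principal submatrix of $M[i_0] \succ 0$, hence itself positive definite by the pseudo-PD hypothesis, so $S_{T^c} = 0$ and therefore $S = 0$. If $T^c = [n/2]$, then $MS = 0$ directly gives $\text{tr}(S^\top M S) = 0$. Either way, $\obj(Y) - \obj(\yaxial) = \inangle{M, SS^\top} = 0$.

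With that characterization in hand, I would invoke standard convergence of Riemannian gradient descent: since $\MMC_{n/2}$ is compact and real-analytic and $\obj$ is a polynomial, for any sufficiently small step size $\eta < \etaprime$ the iterates $Y^{(t)}$ monotonically decrease $\obj$, and the Lojasiewicz gradient inequality upgrades subsequential convergence to convergence of the whole sequence to a single first-order critical point $\yconv$. If $N$ and $\etaprime$ are chosen small enough that the trajectory never exits the neighborhood on which the critical-point characterization applies, then $\yconv$ lies in that neighborhood and the characterization yields $\obj(\yconv) = \obj(\yaxial)$, as required.

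The main obstacle I foresee is precisely this trapping step, because $\yaxial$ is a degenerate critical point (its Hessian has a large kernel, as highlighted in the ``challenges'' discussion) and no strongly convex Lyapunov function is available. The plan is to use the bound $\|Y^{(t+1)} - Y^{(t)}\| \le \eta \, \|\grad \obj(Y^{(t)})\|$ together with the Lojasiewicz gradient inequality to show that the total trajectory length $\sum_t \|Y^{(t+1)} - Y^{(t)}\|$ is controlled by a power of the initial objective gap $\obj(Y^{(0)}) - \obj(\yconv)$, which can be made arbitrarily small by shrinking $N$. Making this quantitative enough to guarantee that $Y^{(t)}$ never exits the required neighborhood is the delicate piece I expect to occupy the bulk of the proof.
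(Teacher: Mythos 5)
Your first step---the characterization showing that every first-order critical point in a small enough neighborhood of $\yaxial$ satisfies $\obj(Y) = \obj(\yaxial)$---is correct, and it is a nice static observation that the paper does not make explicitly; your decomposition $\obj(Y) = \inangle{M, SS^\top} + C$ with $S = Y_1 + Y_2$ is exactly the paper's potential $\Phi(Y) = \norm{Y_1 + Y_2}^2$ in disguise. The genuine gap is the step you yourself flag as ``delicate'': guaranteeing that the iterates never leave the neighborhood on which that characterization applies. The \L{}ojasiewicz route you propose does not close this gap. The standard KL capture/finite-length argument bounds $\sum_t \norm{Y^{(t+1)} - Y^{(t)}}$ by a desingularizing function $\Psi$ evaluated at $\obj(Y^{(0)}) - \obj(\yaxial)$ only under the hypothesis that $\obj(Y^{(t)}) \ge \obj(\yaxial)$ for all $t$: the telescoping of $\Psi(\obj(Y^{(t)}) - \obj(\yaxial))$ requires these arguments to remain nonnegative. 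But ``$\obj$ does not drop below $\obj(\yaxial)$ near $\yaxial$'' is precisely the local minimality one is ultimately trying to establish, and $\yaxial$ is a degenerate critical point (all higher-order Riemannian derivatives vanish on a nontrivial horizontal subspace, per Appendix~\ref{app:challenges}), so this cannot be assumed. Bounding the length by the gap $\obj(Y^{(0)}) - \obj(\yconv)$ instead is equally circular, since $\obj(\yconv)$ is only controlled once you already know the trajectory is trapped. (Compactness plus \L{}ojasiewicz does give convergence of the whole sequence to a single critical point, but a priori that point could be far from $\yaxial$ and have strictly smaller objective value.)

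The paper breaks this circularity by running the descent argument on a quantity other than the objective: Lemma~\ref{lem:decrease-in-potential} proves that a single step of Riemannian gradient descent contracts the potential geometrically, $\Phi(Y'') \le (1 - \eta K)\Phi(Y)$, using the pseudo-PD property of $M$ in a careful row-wise estimate. Since $\norm{\grad \obj(Y)} = O(\norm{M(Y_1 + Y_2)}) = O(\Phi(Y)^{1/2})$, the step lengths then decay geometrically, the total trajectory length is $O(\smallnorm{Y^{(0)} - \yaxial})$, and trapping follows by initializing in a slightly smaller neighborhood (Lemmas~\ref{lem:iterates-are-Cauchy} and~\ref{lem:Riemannian-grad-descent-stays-close}); the limit then has $\Phi = 0$, i.e., it is antipodal, which gives $\obj(\yconv) = \obj(\yaxial)$. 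This contraction of $\Phi$ is the essential dynamical input missing from your proposal: your critical-point characterization is a correct statement about where the dynamics can stop, but it does not control where the dynamics go.
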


Note that Lemma \ref{lem:conv-to-point-with-same-obj} does not imply convergence to $\yaxial$ itself; this is not actually true due to the degeneracy mentioned above. Instead, we show in the proof of Lemma \ref{lem:conv-to-point-with-same-obj} that $\yconv$ is an \textit{antipodal configuration}, i.e., it takes the form $\begin{bmatrix}
		G \\
		-G
	\end{bmatrix} \in \MMC_{n / 2}$ for some $G \in \R^{\frac{n}{2} \times \frac{n}{2}}$. Such antipodal configurations (of which $\yaxial$ is one) all have the same objective value and correspond in particular to ``flat'' directions from $\yaxial$. For a given $\yinitial$, it is not a priori clear which of these antipodal configurations it will converge to, so we argue convergence to \textit{some} antipodal configuration.

In the proof of Lemma \ref{lem:conv-to-point-with-same-obj} in Section \ref{subsec:conv-to-same-obj-value}, we track convergence to an antipodal configuration via a potential $\Phi: \MMC_{n / 2} \to \R_{\ge 0}$, where $\Phi \inparen{\begin{bmatrix}
			G_1 \\
			G_2
		\end{bmatrix}} := ||G_1 + G_2||^2$. (Here, $G_1, G_2 \in \R^{\frac{n}{2} \times \frac{n}{2}}$.) Clearly $\Phi$ is 0 if and only if the input is antipodal. We show that $\Phi$ decreases geometrically over the iterations of Riemannian gradient descent.

Next, we show via a smoothness argument that with sufficiently small step size, the objective is nonincreasing over the iterations of Riemannian gradient descent. The proof is given in Appendix \ref{app:nonincreasing}.

\begin{lemma}[$\obj$ is nonincreasing]
	\label{lem:obj-nonincreasing}
	There exists $\etatilde > 0$ depending only on the instance of \eqref{eq:MC-BM} such that a single iteration of Riemannian gradient descent (as specified in Section \ref{subsec:Riemannian gradient descent}) with any step size $\eta < \etatilde$ cannot increase the objective value, regardless of the starting point.
\end{lemma}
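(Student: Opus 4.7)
The plan is to leverage two facts: (i) the objective $\obj(Y) = \inangle{A, YY^\top}$ is globally quadratic in $Y$, and (ii) the metric projection retraction $\ret_Y$ on the product of spheres $\MMC_{n/2}$ is smooth with a uniform second-order control on how much it deviates from the identity-plus-tangent-step. Because $\obj$ is quadratic, we get the \emph{exact} identity
\begin{equation*}
\obj(Z) - \obj(Y) = \inangle{2AY, Z - Y} + \inangle{A, (Z - Y)(Z - Y)^\top}
\end{equation*}
with no higher-order remainder, where $2AY$ is the Euclidean gradient of $\obj$ at $Y$. This is cleaner than the usual descent lemma and will drive the entire argument.

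The first step is to quantify the retraction error. Writing $\ret_Y(U)$ row by row, tangency gives $\inangle{Y_i, U_i} = 0$, so $(\ret_Y(U))_i = (Y_i + U_i)/\sqrt{1 + \|U_i\|^2}$. From $\sqrt{1+a} - 1 \le a/2$ for $a \ge 0$ and the fact that $\|Y_i + U_i\| = \sqrt{1 + \|U_i\|^2}$, a direct computation yields the uniform bounds $\|\ret_Y(U) - (Y + U)\| \le \tfrac{1}{2}\|U\|^2$ and hence $\|\ret_Y(U) - Y\| \le \|U\| + \tfrac{1}{2}\|U\|^2$, valid at every $Y \in \MMC_{n/2}$.

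The second step is to plug $Z = \ret_Y(-\eta g)$ with $g := \grad\obj(Y)$ into the quadratic expansion. Since by construction $g$ is the Euclidean orthogonal projection of $2AY$ onto $\T_Y \MMC_{n/2}$, one has $\inangle{2AY, g} = \|g\|^2$. Writing $Z - Y = -\eta g + E$ with $\|E\| \le \tfrac{1}{2}\eta^2\|g\|^2$ (from Step~1), the linear term becomes $-\eta\|g\|^2 + \inangle{2AY, E}$; the error is at most $\sqrt{n}\,\|A\|_{\mathrm{op}}\,\eta^2\|g\|^2$, using the uniform bound $\|2AY\| \le 2\sqrt{n}\,\|A\|_{\mathrm{op}}$ that follows from compactness of $\MMC_{n/2}$. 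The quadratic term $|\inangle{A, (Z-Y)(Z-Y)^\top}| \le \|A\|_{\mathrm{op}}\|Z-Y\|^2$ is likewise of order $\eta^2 \|g\|^2$ provided $\eta\|g\|$ is bounded by a constant (and $\|g\|$ itself is uniformly bounded on $\MMC_{n/2}$).

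Combining these ingredients produces an inequality of the form
\begin{equation*}
\obj(Z) - \obj(Y) \le -\eta \|g\|^2 + c_A \,\eta^2 \|g\|^2
\end{equation*}
for an explicit constant $c_A$ depending only on $\|A\|_{\mathrm{op}}$ and $n$. Choosing $\etatilde := 1/c_A$ (or any smaller positive value) makes the right-hand side non-positive whenever $\eta < \etatilde$, completing the proof; if $g = 0$ the inequality is trivial. There is no serious obstacle here: the only delicate piece is verifying the uniform retraction expansion, and the quadratic structure of $\obj$ conveniently eliminates the Taylor remainder that would ordinarily complicate such an argument.
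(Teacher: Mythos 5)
Your proof is correct, and it reaches the same target inequality as the paper --- a bound of the form $\obj(Y'') - \obj(Y) \le (-\eta + c\,\eta^2)\,\|\grad\obj(Y)\|^2$, followed by taking $\eta$ small --- but it gets there by a genuinely different route. The paper obtains the quadratic upper bound by citing a general compactness result (Lemma 10.57 of Boumal's book, restated as Proposition \ref{prop:quadratic-bound}): since $\MMC_p$ is compact and $\obj$ is smooth, there exists \emph{some} constant $L$ with $|\obj(\ret_Y(s)) - \obj(Y) - \inangle{s, \grad\obj(Y)}| \le \tfrac{L}{2}\|s\|^2$, and the proof never computes $L$. You instead derive the same bound from scratch by exploiting two structural features of this specific problem: the objective is exactly quadratic in $Y$ (so the second-order expansion has no remainder), and the metric projection retraction on a product of spheres has the closed form $(Y_i+U_i)/\sqrt{1+\|U_i\|^2}$, giving the explicit error bound $\|\ret_Y(U) - (Y+U)\| \le \tfrac12\|U\|^2$. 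The identity $\inangle{2AY, g} = \|g\|^2$ for $g = \Proj_Y(2AY)$ is also correct and plays the role of the linear term in the paper's bound. What your version buys is an explicit, computable $\etatilde$ in terms of $\|A\|_{\mathrm{op}}$ and $n$, and a fully self-contained argument; what the paper's version buys is brevity and indifference to the particular retraction. One small point to make airtight: your constant $c_A$ in the final display implicitly requires $\eta\|g\|$ to be bounded (you note this parenthetically), so the final choice should be $\etatilde = \min\{1/c_A,\, 1/P\}$ where $P$ is the uniform bound on $\|\grad\obj\|$ over the compact manifold --- the same constant $P$ the paper also introduces.
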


Lemmas \ref{lem:conv-to-point-with-same-obj} and \ref{lem:obj-nonincreasing} imply $\obj(\yinitial) \le \obj(\yconv) = \obj(\yaxial)$, and as a result the neighborhood $N$ in Lemma \ref{lem:conv-to-point-with-same-obj} certifies that $\yaxial$ is a local minimum. The details follow:

\localmin*

\begin{proofof}{Lemma \ref{lem:Yaxial-local-min}}
	We first show that if $M \in \sym^{\frac{n}{2} \times \frac{n}{2}}$ is pseudo-PD, then $\yaxial$ is a local minimum. We claim that the neighborhood $N$ from Lemma \ref{lem:conv-to-point-with-same-obj} certifies that $\yaxial$ is a local minimum. Indeed, let $V \in N$, and initialize Riemannian gradient descent at $V$ with step size $\eta < \min \inbraces{\etaprime, \etatilde}$, with $\etaprime, \etatilde$ defined as in Lemmas \ref{lem:conv-to-point-with-same-obj}, \ref{lem:obj-nonincreasing}. Per Lemma \ref{lem:conv-to-point-with-same-obj}, we know that Riemannian gradient descent will converge to a point $\vconv$ such that $\obj(\yaxial) = \obj(\vconv)$.

	Since $\obj$ is continuous, convergence in iterates translates to convergence in objective values, and thus the nonincreasing nature of Riemannian gradient descent from Lemma \ref{lem:obj-nonincreasing} implies $\obj(\vconv) \le \obj(V)$. Then $\obj(\yaxial) = \obj(\vconv) \le \obj(V)$, and we are done.

	Now we show that $\yaxial$ is additionally spurious if $M$ is strictly pseudo-PD. Indeed, this follows immediately from the last line of Proposition \ref{lem:Yaxial-first-order-crit}. (Recall that any local minimum is also a first-order critical point.)
\end{proofof}

\subsection{Proof of key sublemma: convergence to a point with the same objective value (Lemma \ref{lem:conv-to-point-with-same-obj})}
\label{subsec:conv-to-same-obj-value}
In this section we prove Lemma \ref{lem:conv-to-point-with-same-obj}, which is the main sublemma behind the proof of Lemma \ref{lem:Yaxial-local-min} in the last section. Taken together, the proof of Lemma \ref{lem:conv-to-point-with-same-obj} is by far the longest in this paper and will itself utilize several sublemmas given in this section (with some additional very minor claims proven in Appendix \ref{app:minor-claims}). See the very end of this section for the proof of Lemma \ref{lem:conv-to-point-with-same-obj} itself.

\paragraph{Important setup for this section.} Throughout Section \ref{subsec:conv-to-same-obj-value}, we assume we are in the setting of Lemma \ref{lem:Yaxial-local-min} with pseudo-PD $M \in \sym^{\frac{n}{2} \times \frac{n}{2}}$. Furthermore, we assume for simplicity that $\alpha = 0$. This is without loss of generality because due to the feasibility constraint of \eqref{eq:MC-BM}, shifting the diagonal entries of the cost matrix just corresponds to adding the same constant to the objective value of each feasible point. In particular, it is easy to check that changing $\alpha$ does not affect the geometry of the problem, i.e., the Riemannian derivatives at any point $Y \in \MMC_p$ remain unchanged. As a result of these assumptions, the cost matrix in this section always takes the form
\begin{align}
	\label{eq:cost-mat-setup}
	A =
	\begin{bmatrix}
		M & M \\
		M & M
	\end{bmatrix}
\end{align}
for some pseudo-PD $M \in \sym^{\frac{n}{2} \times \frac{n}{2}}$. Finally, $p$ is always $n / 2$ in this section. (We may sometimes write $p$ instead of $n / 2$ for shorthand.)

To start, the following sublemma, which was described briefly in words in Section \ref{sec:local-minimum}, provides the backbone of the argument. Recall once again that by a neighborhood of $Y \in \MMC_p$, we mean a set of the form $\inbraces{Y' \in \MMC_p : ||Y - Y'|| < \epsilon}$ for some $\epsilon > 0$, where $||\cdot||$ as always denotes the Euclidean (or equivalently Frobenius) norm.

\begin{lemma}[Decrease in the potential $\Phi$]
	\label{lem:decrease-in-potential}
	Let the potential $\Phi : \MMC_{n / 2} \to \R_{\ge 0}$ be defined as $\Phi \inparen{\begin{bmatrix}
				G_1 \\
				G_2
			\end{bmatrix}} := ||G_1 + G_2||^2$, where $G_1, G_2 \in \R^{\frac{n}{2} \times \frac{n}{2}}$. Then there exists a neighborhood $\neigh$ of $\yaxial$ and $\etab > 0$ such that for any $Y \in \neigh$ and $\eta < \etab$, we have
	\begin{align}
		\label{eq:potential-decrease}
		\Phi(Y'') \le \inparen{1 - \eta K} \Phi(Y).
	\end{align}
	Here, $Y'' \in \MMC_{n / 2}$ is the point reached by a single step of Riemannian gradient descent starting from $Y$ with step size $\eta$. (The notation $Y''$ is used as $Y'$ is reserved for something else in the proof.) $K > 0$ is a constant which depends only on the instance of \eqref{eq:MC-BM}.
\end{lemma}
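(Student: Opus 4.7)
The plan is to track the evolution of $S := G_1 + G_2$ directly under one step of Riemannian gradient descent. With $A = \begin{bmatrix} M & M \\ M & M \end{bmatrix}$, the $i$th row ($i \le p$) of $(A - \diag(\nu))Y$ equals $(MS)_i - \nu_i (G_1)_i$, and analogously the $(p+i)$th row equals $(MS)_i - \nu_{p+i} (G_2)_i$. Hence the preretraction update $Z := Y - \eta \grad\obj(Y)$ has block sum
\begin{align*}
    S_Z := Z_1 + Z_2 = S - 4\eta MS + 2\eta(D_1 G_1 + D_2 G_2),
\end{align*}
where $D_1 = \diag(\nu_1, \dots, \nu_p)$ and $D_2 = \diag(\nu_{p+1}, \dots, \nu_n)$. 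Expanding the squared norm,
\begin{align*}
    \|S_Z\|^2 = \|S\|^2 - 8\eta \langle MS, S\rangle + 4\eta \langle D_1 G_1 + D_2 G_2, S\rangle + O(\eta^2 \|S\|^2).
\end{align*}
I aim to show that $-8\eta \langle MS, S\rangle$ drives a geometric contraction, that the linear-in-$\eta$ perturbation term is of lower order, and that the retraction from $Z$ to $Y''$ contributes only a negligible correction.

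For the two error contributions, I would exploit the unit-norm constraints. The identities $\|(G_j)_i\|^2 = 1$ give $\langle (G_1)_i, S_i\rangle = \langle (G_2)_i, S_i\rangle = \|S_i\|^2/2$, while tangency of the gradient forces $\nu_i = \langle (MS)_i, (G_1)_i\rangle$ and $\nu_{p+i} = \langle (MS)_i, (G_2)_i\rangle$. Combining,
\begin{align*}
    \langle D_1 G_1 + D_2 G_2, S\rangle = \tfrac{1}{2} \sum_i \langle (MS)_i, S_i\rangle \|S_i\|^2 = O(\epsilon^2 \|S\|^2)
\end{align*}
on any $\epsilon$-neighborhood of $\yaxial$, since each $\|S_i\| = O(\epsilon)$ there. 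For the retraction, the same tangency gives $\|Z_i\|^2 = 1 + 4\eta^2(\|(MS)_i\|^2 - \nu_i^2) = 1 + O(\eta^2 \|(MS)_i\|^2)$, so renormalization only perturbs $S_Z$ by $O(\eta^2 \|S\|^2)$ in Frobenius norm, contributing $O(\eta^2 \|S\|^3)$ to $\|S''\|^2 = \Phi(Y'')$.

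The crux is lower-bounding $\langle MS, S\rangle$. Pseudo-PD of $M$ yields $\langle MG, G\rangle \ge c_M \|G\|^2$ for every $G \in \off$, where $c_M := \min_i \lambda_{\min}(M[i]) > 0$: decomposing $\langle MG, G\rangle = \sum_k [G]_k^\top M [G]_k$ column-wise, column $k$ of $G$ has a zero in position $k$, so the quadratic form reduces to one with $M[k] \succ 0$. To transfer this to all of $S$, I decompose $S = S_\off + \diag(S)$ and show $\|\diag(S)\| = O(\epsilon \|S_\off\|)$ on a neighborhood of $\yaxial$. Writing $a, b \in \R^p$ for $(G_1)_i$ and $(G_2)_i$ with the $i$th coordinate zeroed and using $(G_j)_{ii} = \pm\sqrt{1 - \|a\|^2}$ respectively $\pm\sqrt{1 - \|b\|^2}$ (signs fixed near $\yaxial$), the exact identity
\begin{align*}
    S_{ii} = \frac{\|b\|^2 - \|a\|^2}{\sqrt{1 - \|a\|^2} + \sqrt{1 - \|b\|^2}} = \frac{\langle b - a,\, a + b\rangle}{\sqrt{1 - \|a\|^2} + \sqrt{1 - \|b\|^2}}
\end{align*}
together with $a + b = (S_\off)_i$ and $\|a\|, \|b\| \le \epsilon$ yields $|S_{ii}| \le 2\epsilon \|(S_\off)_i\|$; summing, $\|\diag(S)\|^2 \le 4\epsilon^2 \|S_\off\|^2$. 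Expanding $\langle MS, S\rangle = \langle MS_\off, S_\off\rangle + 2\langle MS_\off, \diag(S)\rangle + \langle M\diag(S), \diag(S)\rangle$ and applying Cauchy-Schwarz then gives $\langle MS, S\rangle \ge (c_M/2)\|S\|^2$ for $\epsilon$ small enough. Combining all three estimates yields $\Phi(Y'') = \|S''\|^2 \le (1 - \eta K)\|S\|^2$ with $K := c_M$ (say), for all sufficiently small $\epsilon, \eta$. The main obstacle is precisely this diagonal/off-diagonal coupling: the pseudo-PD hypothesis only governs the quadratic form on $\off$, so without the feasibility constraint forcing $S_{ii}$ to be quadratic in $(S_\off)_i$ via the identity above, a configuration where $(S_\off)_i$ happens to be very small in some row could destroy the contraction.
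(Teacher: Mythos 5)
Your proposal is correct and follows essentially the same route as the paper's proof: the contraction comes from the quadratic form of the pseudo-PD matrix $M$ on the off-diagonal part of $S = G_1 + G_2$ (the paper's $\Epsilon$), and your crux --- that the unit-norm constraints force $|S_{ii}| = O(\epsilon\,\|(S_{\mathrm{off}})_i\|)$ --- is exactly the paper's key bound \eqref{eq:key-bound}. The only real deviations are minor: you bound the multiplier term via the exact identity $\langle (G_j)_i, S_i\rangle = \|S_i\|^2/2$ (obtaining $O(\epsilon^2\|S\|^2)$ where the paper settles for $O(\|\Epsilon\|^2\|\Delta\|)$), and you estimate the row-normalization error explicitly as an $O(\eta^2)$ correction, whereas the paper instead shows normalization can never increase $\Phi$ (Lemma \ref{lem:norm-potential}); both are valid.
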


\begin{figure}[!ht]
	\centering
	\begin{tabular}{cl}\toprule
		Notation                                     & Description                                                                                            \\
		\midrule
		\(p\)                                        & \(p = n/2\) always for this section and may be used as a shorthand                                     \\ \als
		$A \in \sym^{n \times n}$                    & cost matrix taking the form \eqref{eq:cost-mat-setup}                                                  \\ \als
		$M \in \sym^{p \times p}$                    & one pseudo-PD block of the cost matrix; see \eqref{eq:cost-mat-setup}                                  \\ \als
		$\Phi$                                       & potential; see Lemma \ref{lem:decrease-in-potential}                                                   \\ \als
		$\yaxial$                                    & the axial position as in Definition \ref{def:axial-position}; in matrix block form: $I_p$ over $- I_p$ \\ \als
		$Y \in \MMC_p$                               & $\yaxial + \Delta$ (an arbitrary point near $\yaxial$)                                                 \\ \als
		$\Delta \in \R^{n \times p}$                 & perturbation matrix used to define $Y$; see the line above                                             \\ \als
		$Y' \in \R^{n \times p}$                     & $Y - \eta \grad \obj(Y)$ (the point we get to with a gradient step from \(Y\))                         \\ \als
		\(Y'' \in \MMC_p\)                           & the retracted (row-normalized) $Y'$ (equivalently the result of taking a                               \\ &single step of Riemannian gradient descent from $Y$) \\ \als
		$Z_i$                                        & used to denote the $i$th row of $Z$ (taken as a column vector) for                                     \\ & a given matrix $Z$ \\ \als
		\(\iplus, \iminus\)                          & \(i\) and \(i + p\) resp. for \(i \in [p]\)                                                            \\ \als
		$\Epsilon, \Epsilon' \in \R^{p \times p}$    & defined via their rows: $\Epsilon_i = \yiplus + \yiminus = \Delta_\iplus + \Delta_\iminus$             \\
		                                             & and $\Epsilon_i' = \yiplus' + \yiminus'$ for all $i \in [p]$                                           \\ \als
		$\Epsilon_{* i \setminus ii} \in \R^{p - 1}$ & the $i$th column of $\Epsilon$ with its $i$th entry removed (only used once!)                          \\ \als
		$M[i] \in \sym^{(p - 1) \times (p - 1)}$     & the submatrix of $M$ formed by removing its $i$th row and column                                       \\ \als
		$\lambda_{\min}(\cdot)$                      & minimum eigenvalue of the input                                                                        \\ \als
		$\minalleigs$                                & $\min_{\ell \in [p]} \lambda_{\min}(M[\ell])$                                                          \\ \als
		\(e_i\)                                      & the \(i\)th standard basis vector                                                                      \\ \als
		$|| \cdot ||$                                & Euclidean (or equivalently Frobenius) norm                                                             \\ \als
		\bottomrule
	\end{tabular}
	\caption{Notation guide for the proof of Lemma \ref{lem:decrease-in-potential}}
	\label{fig:local-min-notation}
\end{figure}

\begin{proof}
	We first provide a brief overview of the proof and introduce some notation. To begin, we will represent $Y$ explicitly in the form $Y = \yaxial + \Delta \in \MMC_{n / 2}$, where $\Delta \in \R^{n \times p}$ should be thought of as a perturbation matrix. (Recall that $p$ is always $n / 2$ in this section and may be used as a shorthand.) Using this representation, we derive explicit expressions for $Y'$ and then $\Phi(Y')$, where $Y' := Y - \eta \grad \obj(Y)$. Recalling the contents of Section \ref{subsec:Riemannian gradient descent}, $Y''$ is $Y'$ followed by a normalization of each row. So $Y'$ takes a gradient step from $Y$ but doesn't normalize the rows, meaning (assuming $\grad \obj(Y) \ne 0$) $Y' \notin \MMC_{n / 2}$. (Thus, we abuse notation here and extend the domain of $\Phi$ to $\R^{n \times p}$.) That said, it is easy to show $\Phi(Y'') \le \Phi(Y')$, so bounding $\Phi(Y')$ is sufficient. We are then able to bound $\Phi(Y')$ by the right-hand side of \eqref{eq:potential-decrease} by taking the step size and $\norm{\Delta}$ to be sufficiently small and using the pseudo-PD property of $M$.

	We now delve into the technical details. We will unfortunately need to introduce a significant amount of notation as we go since we will be performing the above analysis in a row-wise manner. (Which is natural in some sense when we recall that $\MMC_p$ is a product manifold formed by taking the Cartesian product of $n$ unit spheres in $\R^p$. And for product manifolds, geometric entities such as the tangent space and Riemannian derivatives can be expressed as products or concatenations of entities over the constituent manifolds.) As an aid, Figure \ref{fig:local-min-notation} can be used as a reference for the notation used in this proof.

	Letting $\iplus, \iminus$ denote $i, i + p$ respectively for $i \in [p]$, we first derive expressions for $\yiplus', \yiminus' \in \R^p$ for all $i \in [p]$. Recall that $Y' = Y - \eta \grad \obj(Y) = Y - 2 \eta (A - \diag(\nu))Y$ with $\nu$ defined as in \eqref{eq:nu}. Then
	\begin{align*}
		\yiplus' & = \yiplus - 2 \eta \inparen{ \sum_{j = 1}^n A_{ij} Y_j - \sum_{j = 1}^n A_{ij} \inangle{\yiplus, Y_j} \yiplus}                                 \\
		         & = \yiplus - 2 \eta \inparen{ \sum_{j = 1}^p M_{ij} (\yjplus + \yjminus) - \sum_{j = 1}^p M_{ij} \inangle{\yiplus, \yjplus + \yjminus} \yiplus} \\
		         & = \yiplus - 2 \eta  \sum_{j = 1}^p M_{ij} \inparen{\Epsilon_j - \inangle{\yiplus, \Epsilon_j} \yiplus}.
	\end{align*}
	The second line uses the block form of $A$, and the third line introduces new notation: we let $\Epsilon \in \R^{p \times p}$ be defined such that the $i$th row of $\Epsilon$ is $\Epsilon_i = \yiplus + \yiminus = \Delta_\iplus + \Delta_\iminus$ for $i \in [p]$. The matrix $\Epsilon$ is directly related to the potential $\Phi$; indeed, $\Phi(Y) = ||\Epsilon||^2$.

	Similarly, one can derive
	\begin{align*}
		\yiminus' = \yiminus - 2 \eta  \sum_{j = 1}^p M_{ij} \inparen{\Epsilon_j - \inangle{\yiminus, \Epsilon_j} \yiminus}.
	\end{align*}
	Next, we define $\Epsilon' \in \R^{p \times p}$ analogously to $\Epsilon$ but using $Y'$: the $i$th row of $\Epsilon'$ is $\Epsilon_i' = \yiplus' + \yiminus'$ for $i \in [p]$. Thus, $\Phi(Y') = ||\Epsilon'||^2$. (We abuse notation and extend the domain of $\Phi$ to $\R^{n \times p}$.) We will bound $\Phi(Y')$ through $\Epsilon'$ in a row-wise manner. Using the expressions we have derived, we have for $i \in [p]$:
	\begin{align*}
		\Epsilon_i' & = \yiplus' + \yiminus'                                                                                                                                 \\
		            & = \Epsilon_i + 2 \eta \sum_{j = 1}^p M_{ij} \insquare{\inangle{\yiplus, \Epsilon_j} \yiplus + \inangle{\yiminus, \Epsilon_j} \yiminus - 2 \Epsilon_j}.
	\end{align*}
	Then for $i \in [p]$,
	\begingroup
	\allowdisplaybreaks
	\begin{align*}
		\norm{\Epsilon_i'}^2
		 & = \sum_{\ell = 1}^p \inangle{\Epsilon_i', e_\ell}^2                                                                                                                                                                                                            \\
		 & = \sum_{\ell = 1}^p \insquare{ \inangle{\Epsilon_i, e_\ell} + 2 \eta \sum_{j = 1}^p M_{ij} \insquare{\inangle{\yiplus, \Epsilon_j} \inangle{\yiplus, e_\ell} + \inangle{\yiminus, \Epsilon_j} \inangle{\yiminus, e_\ell} - 2 \inangle{\Epsilon_j, e_\ell}} }^2 \\
		 & = \sum_{\ell = 1}^p \left[
			\inangle{\Epsilon_i, e_\ell}^2
		+ O(\eta^2) \vphantom{\sum_{k = 1}^k} \right.                                                                                                                                                                                                                     \\ 
		 & \left. \quad  + 2 \eta \sum_{j = 1}^p M_{ij} \insquare{\inangle{\Epsilon_i, e_\ell} \inparen{
					\inangle{\yiplus, \Epsilon_j} \inangle{\yiplus, e_\ell}
					+ \inangle{\yiminus, \Epsilon_j} \inangle{\yiminus, e_\ell} }
		- 2 \inangle{\Epsilon_i, e_\ell} \inangle{\Epsilon_j, e_\ell}} \right]                                                                                                                                                                                            \\
		 & = \norm{\Epsilon_i}^2 + O(\eta^2)                                                                                                                                                                                                                              \\
		 & \quad
		+ 2 \eta \sum_{\ell = 1}^p
		\sum_{j = 1}^p M_{ij} \insquare{\inangle{\Epsilon_i, e_\ell} \inparen{
				\inangle{\yiplus, \Epsilon_j} \inangle{\yiplus, e_\ell}
				+ \inangle{\yiminus, \Epsilon_j} \inangle{\yiminus, e_\ell} }
			- 2 \inangle{\Epsilon_i, e_\ell} \inangle{\Epsilon_j, e_\ell}}.
	\end{align*}
	\endgroup
	The $O(\eta^2)$ hides terms that depend on the perturbation $\Delta$, but this will not matter as $\eta$ will be taken sufficiently small in the final step after a bound on $\norm{\Delta}$ is set. (The $O(\eta^2)$ also hides terms that depend on the instance of \eqref{eq:MC-BM}, but these do not matter for our purposes.)

	Then
	\begingroup
	\allowdisplaybreaks
	\begin{align}
		\label{eq:potential-of-Y'}
		\begin{split}
			\Phi(Y') &= \norm{\Epsilon'}^2 \\
			&= \sum_{i = 1}^p \norm{\Epsilon_i'}^2 \\
			&= \norm{\Epsilon}^2 + O(\eta^2) \\
			&\quad + 2 \eta \underbrace{  \sum_{i = 1}^p \sum_{\ell = 1}^p
				\sum_{j = 1}^p M_{ij}
				\insquare{\inangle{\Epsilon_i, e_\ell} \inparen{
						\inangle{\yiplus, \Epsilon_j} \inangle{\yiplus, e_\ell}
						+ \inangle{\yiminus, \Epsilon_j} \inangle{\yiminus, e_\ell} }}}_{\circled{1}} \\
			& \quad - 4 \eta \underbrace{  \sum_{i = 1}^p \sum_{\ell = 1}^p
				\sum_{j = 1}^p M_{ij}  \inangle{\Epsilon_i, e_\ell} \inangle{\Epsilon_j, e_\ell} }_{\circled{2}}.
		\end{split}
	\end{align}
	\endgroup

	We now upper bound \circled{1} and lower bound \circled{2} starting with the former, which relies on the key observation that when $i = \ell$, then $\inangle{\Epsilon_i, e_\ell} = \inangle{\Epsilon_i, e_i}$ is small, and when $i \ne \ell$, then $\inangle{\yiplus, e_\ell}$ and $\inangle{\yiminus, e_\ell}$ are small. Formally,
	\begingroup
	\allowdisplaybreaks
	\begin{align}
		\circled{1} & = \sum_{i = 1}^p \sum_{\substack{\ell = 1                     \\ \ell \ne i}}^p
		\sum_{j = 1}^p M_{ij}
		\insquare{\inangle{\Epsilon_i, e_\ell} \inparen{
				\inangle{\yiplus, \Epsilon_j} \inangle{\yiplus, e_\ell}
		+ \inangle{\yiminus, \Epsilon_j} \inangle{\yiminus, e_\ell} }} \nonumber    \\
		            & \quad + \sum_{i = 1}^p
		\sum_{j = 1}^p M_{ij}
		\insquare{\inangle{\Epsilon_i, e_i} \inparen{
				\inangle{\yiplus, \Epsilon_j} \inangle{\yiplus, e_i}
		+ \inangle{\yiminus, \Epsilon_j} \inangle{\yiminus, e_i} }} \nonumber       \\
		            & \le
		\sum_{i = 1}^p \sum_{\substack{\ell = 1                                     \\ \ell \ne i}}^p
		\sum_{j = 1}^p M_{ij}
		\insquare{ \norm{\Epsilon_i} \inparen{
				\norm{\Epsilon_j} \norm{\Delta_\iplus}
		+ \norm{\Epsilon_j} \norm{\Delta_{\iminus}} }} \label{eq:bounding-1-1}      \\
		            & \quad + \sum_{i = 1}^p
		\sum_{j = 1}^p M_{ij}
		\insquare{ \norm{\Epsilon_i} \norm{\Delta_\iplus - \Delta_\iminus}
			\inparen{
				\norm{\Epsilon_j}
				+ \norm{\Epsilon_j}
		}} \label{eq:bounding-1-2}                                                  \\
		            & = O(\norm{\Epsilon}^2 \norm{\Delta}). \label{eq:bounding-1-3}
	\end{align}
	\endgroup

	\eqref{eq:bounding-1-1} uses Cauchy-Schwarz (recall that $\yiplus, \yiminus$ are unit vectors by definition) and the fact that $\inangle{\yiplus, e_\ell} = \inangle{\Delta_\iplus, e_\ell} \le \norm{\Delta_\iplus}$ since $\yiplus = e_i + \Delta_\iplus$ by definition and $i \ne \ell$. (And $\inangle{\yiminus, e_\ell}$ can be bounded similarly.) \eqref{eq:bounding-1-2} uses Cauchy-Schwarz as well as the following key bound:
	\begin{align}
		\label{eq:key-bound}
		\begin{split}
			\inangle{\Epsilon_i, e_i} &= \inangle{\Delta_\iplus, e_i} - \inangle{\Delta_\iminus, - e_i} \\
			&= \frac{- \norm{\Delta_\iplus}^2 + \norm{\Delta_\iminus}^2}{2} \\
			&\le \norm{\Delta_\iplus + \Delta_\iminus} \norm{\Delta_\iplus - \Delta_\iminus} \\
			&= \norm{\Epsilon_i} \norm{\Delta_\iplus - \Delta_\iminus},
		\end{split}
	\end{align}
	where we have used Lemmas \ref{lem:close-to-orthogonal} and \ref{lem:reverse-triangle-squares} from Appendix \ref{app:minor-claims}. This bound is critical; a less tight bound would not work because $\norm{\Epsilon}$ may be much smaller than $\norm{\Delta}$. The big $O$ notation in line \eqref{eq:bounding-1-3} hides terms which depend on the instance of \eqref{eq:MC-BM}, but these don't matter for our purposes.

	We now turn our focus to lower bounding \circled{2}, which is the only place where we use the fact that $M$ is pseudo-PD. We have
	\begingroup
	\allowdisplaybreaks
	\begin{align}
		\circled{2} & =  \sum_{\ell = 1}^p \sum_{\substack{i = 1                                                                                                                                \\ i \ne \ell}}^p
		\sum_{\substack{j = 1                                                                                                                                                                   \\ j \ne \ell}}^p M_{ij}  \inangle{\Epsilon_i, e_\ell} \inangle{\Epsilon_j, e_\ell}  \nonumber \\
		            & \quad + \sum_{\ell = 1}^p \sum_{j = 1}^p M_{\ell j} \inangle{\Epsilon_\ell, e_\ell} \inangle{\Epsilon_j, e_\ell}
		+ \sum_{\ell = 1}^p \sum_{i = 1}^p M_{i \ell}  \inangle{\Epsilon_i, e_\ell} \inangle{\Epsilon_\ell, e_\ell} - \sum_{\ell = 1}^p
		M_{\ell \ell}  \inangle{\Epsilon_\ell, e_\ell} \inangle{\Epsilon_\ell, e_\ell} \nonumber                                                                                                \\
		            & \ge - O(\norm{\Epsilon}^2 \norm{\Delta}) + \sum_{\ell = 1}^p \sum_{\substack{i = 1                                                                                        \\ i \ne \ell}}^p
		\sum_{\substack{j = 1                                                                                                                                                                   \\ j \ne \ell}}^p M_{ij} \Epsilon_{i \ell} \Epsilon_{j \ell} \label{eq:bounding-2-1} \\
		            & = - O(\norm{\Epsilon}^2 \norm{\Delta}) + \sum_{\ell = 1}^p \Epsilon_{*\ell \setminus \ell \ell}^\top M[\ell] \Epsilon_{*\ell \setminus \ell \ell} \label{eq:bounding-2-2} \\
		            & \ge - O(\norm{\Epsilon}^2 \norm{\Delta}) + \sum_{\ell = 1}^p \minalleigs
		\norm{\Epsilon_{*\ell \setminus \ell \ell}}^2 \label{eq:bounding-2-3}                                                                                                                   \\
		            & = - O(\norm{\Epsilon}^2 \norm{\Delta}) + \minalleigs \inparen{\norm{\Epsilon}^2 - \sum_{i = 1}^p \Epsilon_{ii}^2} \label{eq:bounding-2-4}                                 \\
		            & = - O(\norm{\Epsilon}^2 \norm{\Delta}) + \minalleigs \inparen{\norm{\Epsilon}^2 - \sum_{i = 1}^p \inangle{\Epsilon_i, e_i}^2} \label{eq:bounding-2-5}                     \\
		            & \ge - O(\norm{\Epsilon}^2 \norm{\Delta}) + \minalleigs \inparen{\norm{\Epsilon}^2 - \sum_{i = 1}^p \norm{\Epsilon_i}^2 \norm{2 \Delta}^2} \label{eq:bounding-2-6}         \\
		            & = - O(\norm{\Epsilon}^2 \norm{\Delta}) + \minalleigs \inparen{1 - \norm{2 \Delta}^2} \norm{\Epsilon}^2 \label{eq:bounding-2-7}
	\end{align}
	\endgroup

	\eqref{eq:bounding-2-1} once again uses the key inequality \eqref{eq:key-bound} (and Cauchy-Schwarz) and also simply rewrote $\inangle{\Epsilon_i, e_\ell} = \Epsilon_{i \ell}, \inangle{\Epsilon_j, e_\ell} = \Epsilon_{j \ell}$. \eqref{eq:bounding-2-2} introduces the unfortunate notation $\Epsilon_{* \ell \setminus \ell \ell} \in \R^{p - 1}$, which denotes the $\ell$th column of $\Epsilon$ except the $\ell$th entry of this column (aka $\Epsilon_{\ell \ell}$) has been removed. Recall that $M[\ell] \in \sym^{(p - 1) \times (p - 1)}$ as always denotes the submatrix of $M$ formed by removing the $\ell$th row and column. Then, \eqref{eq:bounding-2-2} follows by observing that the inner two summations on the right side of \eqref{eq:bounding-2-1} form a quadratic form which is precisely $\Epsilon_{*\ell \setminus \ell \ell}^\top M[\ell] \Epsilon_{*\ell \setminus \ell \ell}$. In \eqref{eq:bounding-2-3}, we introduce the notation $\minalleigs$ which denotes $\min_{\ell \in [p]} \lambda_{\min}(M[\ell])$, where $\lambda_{\min}(\cdot)$ denotes the minimum eigenvalue of its argument. In other words, $\minalleigs$ lower bounds the eigenvalues of $M[\ell]$ for any $\ell$, and the fact that $\minalleigs > 0$ follows from the fact that $M$ is pseudo-PD. \eqref{eq:bounding-2-4} follows by expanding $\sum_{\ell = 1}^p \norm{\Epsilon_{* \ell \setminus \ell \ell}}^2$ and noting that only the diagonal entries of $\Epsilon$ are not covered. In \eqref{eq:bounding-2-5} we simply rewrite $\Epsilon_{ii} = \inangle{\Epsilon_i, e_i}$, and \eqref{eq:bounding-2-6} uses the key inequality \eqref{eq:key-bound}. \eqref{eq:bounding-2-7} simply uses $\sum_{i = 1}^p \norm{\Epsilon_i}^2 = \norm{\Epsilon}^2$.

	Now going back to \eqref{eq:potential-of-Y'} and using the bounds on \circled{1} and \circled{2}, we have
	\begin{align*}
		\Phi(Y') & \le \norm{\Epsilon}^2 - \eta \minalleigs \inparen{1 - \norm{2 \Delta}^2} \norm{\Epsilon}^2 + \eta O(\norm{\Epsilon}^2 \norm{\Delta}) + O(\eta^2) \\
		         & \le (1 - \eta K) \norm{\Epsilon}^2,
	\end{align*}
	where in the last line we took $||\Delta||$ and then $\eta$ to be sufficiently small. Finally, recall that $\Phi(Y) = \norm{\Epsilon}^2$, and note also that the norm of each row of $Y'$ is at least 1.\footnote{This follows because $Y'$ takes the form of ``a point on $\MMC_{n / 2}$ plus a tangent vector,'' and because tangent vectors are row-wise orthogonal to their base, clearly adding one can only increase the norm of each row.} Then Lemma \ref{lem:norm-potential} from Appendix \ref{app:minor-claims} implies $\Phi(Y'') \le \Phi(Y')$, and we are done. 
\end{proof}

Next, we would like to extend the result of Lemma \ref{lem:decrease-in-potential} to all consecutive pairs of iterates produced by Riemannian gradient descent and not just the first pair. This will be done shortly in Lemma \ref{lem:Riemannian-grad-descent-stays-close}, but in preparation we first show that as long as the iterates of Riemannian gradient descent stay in the neighborhood $\neigh$ identified in Lemma \ref{lem:decrease-in-potential}, they form a Cauchy sequence where the distances between consecutive iterates decrease geometrically.

\begin{lemma}[Iterates confined to neighborhood form Cauchy sequence]
	\label{lem:iterates-are-Cauchy}
	Let $\neigh, \etab, K$ denote the neighborhood, step-size bound, and constant identified in Lemma \ref{lem:decrease-in-potential}. Suppose Riemannian gradient descent with step size $\eta < \etab$ is initialized at some $Y^{(0)} \in \neigh$, and furthermore $Y^{(1)}, \dots, Y^{(t)} \in \neigh$. Then for any $k \in \inbraces{0, \dots, t}$, we have
	\begin{align*}
		\smallnorm{Y^{(k)} - Y^{(k + 1)}} \le 4 \eta \norm{M} \smallnorm{Y^{(0)} - \yaxial} (1 - \eta K)^{k / 2}.
	\end{align*}
\end{lemma}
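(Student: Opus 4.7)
The plan is to combine three ingredients: a one-step retraction bound that turns $\smallnorm{Y^{(k)} - Y^{(k+1)}}$ into a constant times $\eta\,\smallnorm{\grad \obj(Y^{(k)})}$; a bound controlling $\smallnorm{\grad \obj(Y^{(k)})}$ by $\smallnorm{M}\sqrt{\Phi(Y^{(k)})}$ (up to a constant) that comes from the doubled block structure of $A$; and the geometric decay $\Phi(Y^{(k)}) \le (1 - \eta K)^k\, \Phi(Y^{(0)})$ obtained by iterating Lemma \ref{lem:decrease-in-potential}. I would close out by passing from $\Phi(Y^{(0)})$ back to $\smallnorm{Y^{(0)} - \yaxial}^2$ via the trivial row-wise estimate $\smallnorm{\Delta_{i_+} + \Delta_{i_-}}^2 \le 2\smallnorm{\Delta_{i_+}}^2 + 2\smallnorm{\Delta_{i_-}}^2$.

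First I would handle the retraction. Because the $\ell$-th row $g_\ell$ of $\grad \obj(Y^{(k)})$ is tangent to the unit sphere at $Y^{(k)}_\ell$, the unnormalized row $Y^{(k)}_\ell - \eta g_\ell$ has squared norm exactly $1 + \eta^2 \smallnorm{g_\ell}^2$, and the elementary identity $\smallnorm{Y^{(k)}_\ell - Y^{(k+1)}_\ell}^2 = 2 - 2/\sqrt{1 + \eta^2 \smallnorm{g_\ell}^2} \le \eta^2 \smallnorm{g_\ell}^2$ yields a per-row bound; summing over rows gives $\smallnorm{Y^{(k)} - Y^{(k+1)}} \le \eta\,\smallnorm{\grad \obj(Y^{(k)})}$. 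Second I would bound the gradient using the block form \eqref{eq:cost-mat-setup}: letting $\Epsilon_j = Y^{(k)}_{j_+} + Y^{(k)}_{j_-}$, the $i_+$-th and $i_-$-th rows of $\grad \obj(Y^{(k)})$ are each equal to $2(M\Epsilon)_i$ with its component along $Y^{(k)}_{i_+}$ (resp.\ $Y^{(k)}_{i_-}$) removed. Since projection does not increase norm, each such row has norm at most $2\smallnorm{(M\Epsilon)_i}$, so $\smallnorm{\grad \obj(Y^{(k)})}^2 \le 8 \smallnorm{M\Epsilon}^2 \le 8 \smallnorm{M}^2 \Phi(Y^{(k)})$, using $\Phi(Y^{(k)}) = \smallnorm{\Epsilon}^2$.

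With these ingredients in hand, iterating Lemma \ref{lem:decrease-in-potential} (applicable at every step $0, 1, \dots, k-1$ because all those iterates lie in $\neigh$ and $\eta < \etab$) gives $\Phi(Y^{(k)}) \le (1 - \eta K)^k\, \Phi(Y^{(0)})$. Writing $Y^{(0)} = \yaxial + \Delta$, a direct computation yields $\Phi(Y^{(0)}) = \sum_{i=1}^p \smallnorm{\Delta_{i_+} + \Delta_{i_-}}^2 \le 2\smallnorm{\Delta}^2 = 2\smallnorm{Y^{(0)} - \yaxial}^2$. Chaining everything produces $\smallnorm{Y^{(k)} - Y^{(k+1)}} \le 2\sqrt{2}\,\eta \smallnorm{M} \sqrt{\Phi(Y^{(k)})} \le 4\eta \smallnorm{M} (1 - \eta K)^{k/2} \smallnorm{Y^{(0)} - \yaxial}$, matching the claim. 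The main subtlety is the gradient-vs-potential estimate: one has to notice that the doubled-block shape of $A$ makes $\grad \obj$ factor through $\Epsilon$, so that $\smallnorm{\grad \obj(Y^{(k)})}$ is controlled by $\sqrt{\Phi(Y^{(k)})}$ rather than by the full perturbation size $\smallnorm{\Delta}$ (otherwise one loses the geometric decay in $k$); the sharp constant $4$ additionally requires the non-expansive row-wise retraction bound $\smallnorm{Y^{(k)}_\ell - Y^{(k+1)}_\ell} \le \eta \smallnorm{g_\ell}$ rather than the loose triangle-inequality version.
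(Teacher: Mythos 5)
Your proof is correct and follows essentially the same route as the paper's: bound the one-step displacement by $\eta\,\norm{\grad \obj(Y^{(k)})}$ via a row-wise retraction contraction, control the gradient by $\norm{M}\sqrt{\Phi(Y^{(k)})}$ using the doubled block structure of $A$, iterate Lemma~\ref{lem:decrease-in-potential}, and relate $\Phi(Y^{(0)})$ back to $\smallnorm{Y^{(0)}-\yaxial}$. The only (harmless) difference is in how the constant $4$ is assembled: you use a tighter gradient bound ($8\norm{M}^2\Phi$ by exploiting that each gradient row is a projection of $2(M\Epsilon)_i$) together with the looser but safer estimate $\Phi(Y^{(0)}) \le 2\smallnorm{Y^{(0)}-\yaxial}^2$, whereas the paper uses the cruder Euclidean-gradient bound together with $\Phi(Y^{(0)}) \le \smallnorm{Y^{(0)}-\yaxial}^2$; the two factors of $\sqrt{2}$ trade places and the final constant matches.
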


Before starting the proof, note that we critically do not require $Y^{(t + 1)} \in O$. This will be important in the proof of Lemma \ref{lem:Riemannian-grad-descent-stays-close}.

\begin{proof}
	We have for $k \in \{ 0, \dots, t\}$:
	\begin{align*}
		\norm{Y^{(k)} - Y^{(k + 1)}}^2 & = \norm{Y^{(k)} - \ret_{Y^{(k)}} \inparen{- \eta \grad \obj(Y^{(k)})}}^2.
	\end{align*}
	Now, recall from Section \ref{subsec:Riemannian gradient descent} that $\ret_{Y^{(k)}} \inparen{- \eta \grad \obj(Y^{(k)})}$ is equal to $Y^{(k)} - \eta \grad \obj(Y^{(k)})$ followed by a normalization of each row. We claim
	\begin{align*}
		\norm{Y^{(k)} - \ret_{Y^{(k)}} \inparen{- \eta \grad \obj(Y^{(k)})}}^2 & \le
		\norm{Y^{(k)} - \inparen{Y^{(k)} - \eta \grad \obj(Y^{(k)})}}^2                                               \\
		                                                                       & = \norm{\eta \grad \obj(Y^{(k)})}^2.
	\end{align*}
	Indeed, this follows from applying Lemma \ref{lem:proj-contractive} from Appendix \ref{app:minor-claims} in a row-wise manner, where $u$ is chosen as the $i$th row of $Y^{(k)}$, $v$ is chosen as the $i$th row of $\ret_{Y^{(k)}} \inparen{- \eta \grad \obj(Y^{(k)})}$, $\alpha = 1$, and $\beta$ is set to the norm of the $i$th row of $Y^{(k)} - \eta \grad \obj(Y^{(k)})$. ($\beta \ge 1$ since $\eta \grad \obj(Y^{(k)}$ is row-wise orthogonal to $Y^{(k)}$ due to the former being a tangent vector.) Intuitively, we are just using the fact that retraction onto the sphere can only bring you closer to your starting point.

		Next, note that
		\begin{align*}
			\norm{\eta \grad \obj(Y^{(k)})}^2 \le \norm{\eta \nabla \obj(Y^{(k)})}^2
			= \norm{2\eta AY^{(k)}}^2,
		\end{align*}
		where $\nabla$ is used to denote the classical Euclidean gradient. This follows because, as mentioned in Section \ref{subsec:Riemannian-derivatives}, the Riemannian gradient is equal to the Euclidean gradient composed with an orthogonal projection, and an orthogonal projection can only decrease the norm. Now write $Y^{(k)} = \begin{bmatrix}
		G_1 \\
		G_2
	\end{bmatrix}$ where $G_1, G_2 \in \R^{\frac{n}{2} \times \frac{n}{2}}$. Using the form of $A$ given in \eqref{eq:cost-mat-setup}, it is easy to see that
		\begin{align*}
			\smallnorm{2\eta AY^{(k)}}^2 = \norm{4 \eta M (G_1 + G_2)}^2 \le 16 \eta^2 \norm{M}^2 \norm{G_1 + G_2}^2,
		\end{align*}
		where we used the fact that the Frobenius norm is submultiplicative. Then, note that by definition, $\Phi(Y^{(k)}) = \norm{G_1 + G_2}^2$. Connecting the dots, we have shown up to this point that
		\begin{align}
			\label{eq:distance-bound-in-progress}
			\smallnorm{Y^{(k)} - Y^{(k + 1)}}^2 \le 16 \eta^2 \norm{M}^2 \Phi(Y^{(k)}).
		\end{align}
		Using Lemma \ref{lem:decrease-in-potential} and the fact that the iterates up to step $k$ are in $\neigh$, we have $\Phi(Y^{(k)}) \le \inparen{1 - \eta K}^k \Phi(Y^{(0)})$. (Note that when $k = t$, we are critically relying on the fact that Lemma \ref{lem:decrease-in-potential} does not require $Y'' \in \neigh$.) Furthermore, it is easy to see that $\Phi$ always bounds the distance away from $\yaxial$; indeed, let $Z = \begin{bmatrix}
		Z_1 \\
		Z_2
	\end{bmatrix}$ with $Z_1, Z_2 \in \R^{\frac{n}{2} \times \frac{n}{2}}$ and note that
	\begin{align*}
		\Phi(Z) & = \norm{Z_1 + Z_2}^2                    \\
		        & = \norm{Z_1 + I + Z_2 - I}^2            \\
		        & \le \norm{Z_1 + I}^2 + \norm{Z_2 - I}^2 \\
		        & = \smallnorm{Z - \yaxial}^2.
	\end{align*}
	\eqref{eq:distance-bound-in-progress} and the contents of the previous paragraph imply the desired result.
\end{proof}

Now we finally extend Lemma \ref{lem:decrease-in-potential} to all consecutive pairs of iterates produced by Riemannian gradient descent.
This can of course be achieved by confining all iterates to the neighborhood identified in Lemma \ref{lem:decrease-in-potential}. The following lemma does this by initializing in an even smaller neighborhood of $\yaxial$.

\begin{lemma}[Riemannian gradient descent stays close to $\yaxial$]
	\label{lem:Riemannian-grad-descent-stays-close}
	Let $\neigh, \etab, K$ denote the neighborhood, step-size bound, and constant identified in Lemma \ref{lem:decrease-in-potential}. Then there exists a neighborhood $\smallneigh \subseteq \neigh$ of $\yaxial$ such that if Riemannian gradient descent with step size $\eta < \min \inbraces{\etab, \frac{1}{2K}}$ is initialized at any $Y^{(0)} \in \smallneigh$, all future iterates lie in $\neigh$.
\end{lemma}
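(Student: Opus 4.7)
\textbf{Proof proposal for Lemma~\ref{lem:Riemannian-grad-descent-stays-close}.} The plan is to use Lemma~\ref{lem:iterates-are-Cauchy} to show that, conditional on all iterates up to step $t$ lying in $\neigh$, the total distance travelled from $Y^{(0)}$ to $Y^{(t+1)}$ is bounded by a geometric series whose sum is proportional to $\smallnorm{Y^{(0)} - \yaxial}$. Choosing $\smallneigh$ small enough in proportion to $\neigh$ then closes an induction: every new iterate is still within $\neigh$, so the hypothesis of Lemma~\ref{lem:iterates-are-Cauchy} continues to apply.

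More concretely, let $\epsilon > 0$ be such that the open ball $B_\epsilon(\yaxial) \subseteq \MMC_{n/2}$ is contained in $\neigh$, and take $\smallneigh := B_\delta(\yaxial)$ for a $\delta \in (0, \epsilon)$ to be fixed below. I will induct on $t$. The base case $Y^{(0)} \in \smallneigh \subseteq \neigh$ is immediate. For the inductive step, suppose $Y^{(0)}, \dots, Y^{(t)} \in \neigh$; I need to show $Y^{(t+1)} \in \neigh$. By the triangle inequality,
\begin{align*}
    \smallnorm{Y^{(t+1)} - \yaxial} \le \smallnorm{Y^{(0)} - \yaxial} + \sum_{k = 0}^{t} \smallnorm{Y^{(k)} - Y^{(k+1)}}.
\end{align*}
Applying Lemma~\ref{lem:iterates-are-Cauchy} (whose hypotheses are satisfied since the iterates up through step $t$ lie in $\neigh$) bounds each summand by $4 \eta \norm{M} \smallnorm{Y^{(0)} - \yaxial} (1 - \eta K)^{k/2}$. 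Summing the resulting geometric series in $(1 - \eta K)^{1/2}$ (which converges because $\eta K < 1/2 < 1$) yields
\begin{align*}
    \sum_{k = 0}^{t} \smallnorm{Y^{(k)} - Y^{(k+1)}} \le \frac{4 \eta \norm{M} \smallnorm{Y^{(0)} - \yaxial}}{1 - \sqrt{1 - \eta K}} \le \frac{8 \norm{M}}{K} \smallnorm{Y^{(0)} - \yaxial},
\end{align*}
where the second inequality uses the elementary bound $1 - \sqrt{1 - x} \ge x/2$ valid for $x \in [0, 1]$ (applied with $x = \eta K$).

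Combining the two displays gives $\smallnorm{Y^{(t+1)} - \yaxial} \le (1 + 8 \norm{M} / K) \smallnorm{Y^{(0)} - \yaxial} \le (1 + 8 \norm{M} / K) \delta$. Choosing $\delta$ so that $(1 + 8 \norm{M} / K) \delta < \epsilon$ (which depends only on the instance of \eqref{eq:MC-BM} and on the constants $\epsilon, K$ already fixed by Lemma~\ref{lem:decrease-in-potential}) then forces $Y^{(t+1)} \in B_\epsilon(\yaxial) \subseteq \neigh$, completing the induction. I do not anticipate a serious obstacle here: the only subtlety is the careful interleaving of the induction with Lemma~\ref{lem:iterates-are-Cauchy}, which crucially bounds $\smallnorm{Y^{(k)} - Y^{(k+1)}}$ for $k$ all the way up to $t$ without requiring $Y^{(t+1)}$ itself to already lie in $\neigh$, which is precisely what makes this bootstrapping argument go through.
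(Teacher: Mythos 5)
Your proposal is correct and follows essentially the same route as the paper: induction on $t$, the triangle inequality, and Lemma~\ref{lem:iterates-are-Cauchy} (exploiting precisely the fact that it does not require $Y^{(t+1)} \in \neigh$) to bound the total displacement by a convergent geometric series, then shrinking $\smallneigh$ accordingly. The only difference is cosmetic: you evaluate the geometric series explicitly via $1 - \sqrt{1 - \eta K} \ge \eta K/2$ to get the $t$- and $\eta$-independent constant $8\norm{M}/K$, whereas the paper simply observes the series is bounded independently of $t$.
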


\begin{proof}
	We proceed by induction on the iterate counter $t$, and will choose $\smallneigh$ in the inductive step so that the proof goes through. $Y^{(0)} \in \neigh$ since $\smallneigh \subseteq \neigh$. Now suppose $Y^{(0)}, \dots, Y^{(t)} \in \neigh$, and we will set $\smallneigh$ independently of $t$ so that $Y^{(t + 1)} \in \neigh$. We have
	\begin{align*}
		\smallnorm{\yaxial - Y^{(t + 1)}} & \le \smallnorm{\yaxial - Y^{(0)}} + \sum_{k = 0}^t \smallnorm{Y^{(k)} - Y^{(k + 1)}}                                               \\
		                                  & \le \smallnorm{\yaxial - Y^{(0)}} + 4 \eta \smallnorm{M} \smallnorm{\yaxial - Y^{(0)}} \sum_{k = 0}^t \inparen{1 - \eta K}^{k / 2} \\
		                                  & \le \smallnorm{\yaxial - Y^{(0)}} \inparen{1 + 4 \eta \norm{M} \sum_{k = 0}^\infty \inparen{1 - \eta K}^{k / 2}},
	\end{align*}
	where we used Lemma \ref{lem:iterates-are-Cauchy} (which critically doesn't require $Y^{(t + 1)} \in \neigh$).

	Clearly $\inparen{1 + 4 \eta \norm{M} \sum_{k = 0}^\infty \inparen{1 - \eta K}^{k / 2}}$ is bounded and doesn't depend on $t$, so $\smallnorm{\yaxial - Y^{(0)}}$ can be chosen sufficiently small so that $Y^{(t + 1)} \in O$.
\end{proof}

Finally, we give the proof of Lemma \ref{lem:conv-to-point-with-same-obj}:

\begin{proofof}{Lemma \ref{lem:conv-to-point-with-same-obj}}
	We choose $N$ to be the neighborhood $\smallneigh$ identified in Lemma \ref{lem:Riemannian-grad-descent-stays-close} and pick $\etaprime = \min \inbraces{\etab, \frac{1}{2K}}$ with $\etab, K$ defined as in Lemma \ref{lem:decrease-in-potential}. Then due to Lemma \ref{lem:Riemannian-grad-descent-stays-close}, all iterates of Riemannian gradient descent with any step size $\eta < \etaprime$ initialized at any $Y^{(0)} \in N$ lie in $\neigh$, the neighborhood identified in Lemma \ref{lem:decrease-in-potential}. As a result, Lemma \ref{lem:iterates-are-Cauchy} implies that the sequence of iterates $Y^{(0)}, Y^{(1)}, \dots$ forms a Cauchy sequence, and since $\R^{n \times p}$ is complete, the iterates converge to some $\yconv$. Clearly $\yconv \in \MMC_{n / 2}$ as all iterates of Riemannian gradient descent lie on $\MMC_{n / 2}$.

	Also, Lemma \ref{lem:decrease-in-potential} and the fact that all iterates lie in $\neigh$ implies that the sequence $\Phi(Y^{(0)}), \Phi(Y^{(1)}), \dots$ converges to 0. Since $\Phi$ is continuous, we have $\Phi(\yconv) = 0$. It is easy to see that for $Z \in \MMC_{n / 2}$, we have $\Phi(Z) = 0$ if and only if $Z$ is \textit{antipodal}, where as in Section \ref{sec:local-minimum}, we say a point is antipodal if it takes the form $\begin{bmatrix}
			G \\
			-G
		\end{bmatrix} \in \MMC_{n / 2}$ for some $G \in \R^{\frac{n}{2} \times \frac{n}{2}}$. Thus, $\yconv$ is antipodal. Note that $\yaxial$ is also antipodal, and it is easy to check that all antipodal points have the same objective value. (In particular, if the cost matrix takes the form \eqref{eq:cost-mat-setup} as we assume without loss of generality in this section, that objective value is 0.)
\end{proofof}


\section{Constructions of strictly pseudo-PD matrices (Lemma \ref{prop:existence-of-Mbad})}
\label{sec:bad-matrices-constructions}
In this section we provide two proofs of Lemma \ref{prop:existence-of-Mbad}, which posits the existence of $k \times k$ strictly pseudo-PD matrices for $k \ge 2$. The first is a probabilistic construction and the second is deterministic. The former has the advantage of having nonnegative entries, which, combined with Lemma \ref{lem:Yaxial-local-min}, results in ``natural'' cost matrices that can arise as the adjacency matrix of a weighted graph. For the latter deterministic construction, we also characterize the optimal solutions of the associated instances of \eqref{eq:MC-SDP}, revealing they have a qualitatively different structure than $\yaxial \yaxial^\top$.

\subsection{Probabilistic construction with nonnegative entries}
\label{subsec:probabilistic-construction}

Our random construction uses a random matrix $U \in \R^{k \times (k-1)}$ with nonnegative entries such that for each $i \in [k]$, the submatrix $U^{(i)} \in \R^{(k-1) \times (k-1)}$ formed by removing the $i$th row of $U$ has non-negligible least-singular-value (and hence full rank). Each entry of $U$ is generated i.i.d. from a $N(\mu, 1)$ where $\mu = c_0 \sqrt{\log k}$ for a sufficiently large constant $c_0>0$. The final matrix is just $M= UU^\top - \varepsilon I$, where $\varepsilon = k^{-\Omega(1)}$ is set appropriately. By construction $UU^\top$ is a $k \times k$ matrix of rank $k - 1$; hence $M$ has exactly one negative eigenvalue. In what follows $\lambda_\ell(M)$ denotes the $\ell$th eigenvalue of $M$ (note that $M$ is symmetric in our case; hence all eigenvalues are real).

\begin{proposition}[Randomized construction for Lemma~\ref{prop:existence-of-Mbad}] \label{prop:rand-rect-singular-value}
	There exists absolute constants $c_0, c_1, c_2>0$ such that the following holds for a given $k \in \N$ with $k \ge 2$. Suppose $U \in \R^{k \times (k - 1)}$ is a random matrix where each entry is generated i.i.d. from $N(\mu, 1)$ for $\mu=c_0 \sqrt{\log k}$ and let $M = UU^\top - \varepsilon I$ with $\varepsilon \coloneqq c_1/k^{c_2}$.  Then, with probability at least $1-1/k^{7}$, $M$ is nonnegative and strictly pseudo-PD. In particular,
	\begin{enumerate}
		\item[(i)] for each $i \in [k]$, the submatrix $M[i] \in \sym^{(k-1) \times (k-1)}$ formed by removing the $i$th row and the $i$th column of $M$ satisfies $\lambda_{k - 1} (M[i]) \ge \frac{c_1}{k^{c_2}}$.
		\item[(ii)] every entry of $M$ is nonnegative.
		\item[(iii)] it is not positive semidefinite i.e., $\lambda_{k}(M) \le - \frac{c_1}{k^{c_2}}$.
	\end{enumerate}
\end{proposition}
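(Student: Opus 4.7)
The plan breaks into a deterministic structural reduction and two probabilistic estimates. Since $M = UU^\top - \varepsilon I$ just shifts every eigenvalue of $UU^\top$ by $-\varepsilon$, and since $U \in \R^{k \times (k-1)}$ forces $\mathrm{rank}(UU^\top) \le k-1$ with equality almost surely, I immediately get $\lambda_k(M) = -\varepsilon = -c_1/k^{c_2}$, which is claim (iii) deterministically. For the principal submatrices, writing $U^{(i)} \in \R^{(k-1)\times(k-1)}$ for $U$ with its $i$th row deleted gives $(UU^\top)[i] = U^{(i)}(U^{(i)})^\top$ and hence $\lambda_{k-1}(M[i]) = \sigma_{\min}(U^{(i)})^2 - \varepsilon$. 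Claim (i) therefore reduces to a polynomial lower bound on $\sigma_{\min}(U^{(i)})$ uniformly in $i \in [k]$.

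For this bound I plan to use the row-distance identity $\sigma_{\min}(A) \ge n^{-1/2} \min_{j \in [n]} \mathrm{dist}(A_j, H_j)$ for an $n \times n$ matrix $A$, where $H_j$ is the span of the rows of $A$ other than the $j$th. Fix $i$ and $j$: since the $j$th row of $U^{(i)}$ is distributed as $N(\mu \mathbf{1}, I_{k-1})$ and is independent of $H_j$, conditioning on $H_j$ and picking any unit vector $\mathbf{u} \in H_j^\perp$ makes $(U^{(i)}_j)^\top \mathbf{u}$ a univariate Gaussian of unit variance (with some mean that does not matter). The uniform density bound $1/\sqrt{2\pi}$ then gives $\Pr[\mathrm{dist}(U^{(i)}_j, H_j) \le t] \le 2t/\sqrt{2\pi}$ for every $t > 0$. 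Union bounding over $j \in [k-1]$ and $i \in [k]$ with $t = k^{-C}$ for a large enough absolute constant $C$ ensures $\sigma_{\min}(U^{(i)}) \ge k^{-C-1/2}$ for every $i$ with probability at least $1 - 1/(2k^7)$, and taking $c_1 = 1/2$ and $c_2 = 2C + 1$ then yields $\lambda_{k-1}(M[i]) \ge \sigma_{\min}(U^{(i)})^2 - \varepsilon \ge \varepsilon$, which is (i).

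For (ii), each $U_{i\ell} \sim N(\mu, 1)$ is negative with probability at most $e^{-\mu^2/2} = k^{-c_0^2/2}$, so union bounding over the $k(k-1)$ entries makes $U$ entrywise positive with probability at least $1 - 1/(2k^7)$, provided $c_0$ is chosen sufficiently large. On this event the off-diagonals of $UU^\top$ are sums of products of positive reals, while the diagonals satisfy $(UU^\top)_{ii} = \|U_i\|^2 = \Theta(\mu^2 k) \gg \varepsilon$, so every entry of $M = UU^\top - \varepsilon I$ is nonnegative. A final union bound of the two random events gives overall failure probability at most $1/k^7$.

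The main obstacle is the least-singular-value bound on the shifted Gaussian matrix $U^{(i)} = \mu J + V^{(i)}$ (where $J$ is all-ones and $V^{(i)}$ is standard Gaussian): a naive Weyl bound $\sigma_{\min}(V^{(i)} + \mu J) \ge \sigma_{\min}(V^{(i)}) - \mu \|J\|_{\mathrm{op}}$ is useless because $\mu \|J\|_{\mathrm{op}} = \Theta(k \sqrt{\log k})$, and a direct $\epsilon$-net argument appears to demand Rudelson-Vershynin-style anti-concentration. The row-distance route sidesteps both issues because the shift lies in the single rank-one direction $\mathbf{1}\mathbf{1}^\top$, and the only anti-concentration the argument needs, namely univariate Gaussian density boundedness in a direction orthogonal to the span of the remaining rows, is insensitive to the mean.
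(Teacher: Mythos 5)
Your proposal is correct and follows essentially the same route as the paper: rank-deficiency of $UU^\top$ for (iii), a leave-one-out-distance plus univariate Gaussian anti-concentration bound on $\sigma_{\min}(U^{(i)})$ for (i) (the paper phrases it over columns rather than rows, which is equivalent), and entrywise positivity of $U$ via Gaussian tail bounds for (ii). The only loose step is the claim that entrywise positivity of $U$ already gives $\|U_i\|^2 = \Theta(\mu^2 k)$ --- positivity alone does not bound the magnitudes from below --- but the needed inequality $\|U_i\|^2 \ge \varepsilon$ follows either from a one-line strengthening of the tail event (e.g.\ $U_{i\ell} \ge \mu/2$) or, as the paper does, from observing that every diagonal entry of $M$ sits inside some positive semidefinite $M[i]$.
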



We remark that the constant $7$ in the exponent of the failure probability is arbitrarily chosen. We can make this an arbitrarily large constant, and adjust constants $c_0, c_1, c_2>0$ appropriately.

The proof of the above lemma using the following claim about the least singular value of square matrices. We remark that much stronger bounds on the least singular values are known in random matrix theory. We state and include a proof (which follows somewhat standard arguments) of the following claim which is more tailored for our needs.
\begin{lemma}\label{lem:rand-singular-value}
	Fix any $k \in \N$ with $k \ge 1$.
	Let $A \in \R^{k \times k}$ be a random matrix, each entry of which is sampled $N(\mu, 1)$ for some $\mu$. Then there exists absolute constants $c_1, c_2, c_3>0$ such that with probability at least $1-1/(k+1)^{8}$, we have:
	\begin{enumerate}
		\item[(i)] $s_k (A)^2 > 2c_1/(k+1)^{c_2}$, where $s_k(A)$ denotes the least singular value of $A$.
		\item[(ii)] every entry of $A$ is in the interval $[\mu - c_3 \sqrt{\log k}, \mu+ c_3 \sqrt{\log k}]$.
	\end{enumerate}
\end{lemma}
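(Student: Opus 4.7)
The plan is to dispatch part (ii) with a routine Gaussian tail plus union bound, and to prove part (i) by combining the classical column-distance lower bound for the least singular value with Gaussian anti-concentration. For (ii), each entry $A_{ij} \sim N(\mu,1)$ satisfies $\Pr(|A_{ij}-\mu|>c_3\sqrt{\log k}) \le 2\exp(-c_3^2(\log k)/2)$, so union-bounding over the $k^2$ entries gives failure at most $\tfrac{1}{2}(k+1)^{-8}$ once $c_3$ is a sufficiently large absolute constant.

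For (i), let $a_1, \dots, a_k$ denote the columns of $A$ and set $V_i := \operatorname{span}\{a_j : j \ne i\}$. I would start from the standard inequality $s_k(A) = 1/\norm{A^{-1}}_{\mathrm{op}} \ge 1/\norm{A^{-1}}_F$ combined with the identity $\norm{A^{-1}}_F^2 = \sum_i \operatorname{dist}(a_i,V_i)^{-2}$, which follows because the $i$th row of $A^{-1}$ is proportional to the component of $a_i$ orthogonal to $V_i$ (it lies in $V_i^\perp$ and pairs to $1$ with $a_i$). This gives
\[
s_k(A) \;\ge\; \frac{1}{\sqrt{k}}\, \min_{i \in [k]} \operatorname{dist}(a_i, V_i),
\]
so it suffices to lower bound each $\operatorname{dist}(a_i, V_i)$ uniformly in $i$ with high probability.

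For a fixed $i$, since each column has a Gaussian density, $\{a_j\}_{j \ne i}$ are a.s.\ linearly independent and $V_i$ is $(k-1)$-dimensional; pick a unit vector $\widetilde w_i \in V_i^\perp$ as a measurable function of $(a_j)_{j \ne i}$. Then $\operatorname{dist}(a_i, V_i) = |\inangle{\widetilde w_i, a_i}|$. The crucial observation is that $\widetilde w_i$ is independent of $a_i$, so conditional on $\widetilde w_i$ the inner product $\inangle{\widetilde w_i, a_i}$ is Gaussian with variance $\norm{\widetilde w_i}^2 = 1$ and mean $\mu\inangle{\widetilde w_i, \mathbf{1}}$. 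Since the density of a unit-variance Gaussian is bounded by $1/\sqrt{2\pi}$ \emph{regardless of its mean}, the anti-concentration estimate $\Pr(|\inangle{\widetilde w_i, a_i}| \le t \mid \widetilde w_i) \le \sqrt{2/\pi}\,t$ holds for every $t > 0$.

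Choosing $t := \sqrt{2c_1 k/(k+1)^{c_2}}$ and union-bounding over $i \in [k]$ yields $\min_i \operatorname{dist}(a_i, V_i)^2/k \ge 2c_1/(k+1)^{c_2}$ with failure probability at most $k\sqrt{2/\pi}\,t$; picking $c_2$ a sufficiently large absolute constant (e.g.\ $c_2 = 20$) and $c_1$ correspondingly small makes this at most $\tfrac{1}{2}(k+1)^{-8}$, and a final union bound with (ii) completes the proof. The point worth highlighting is that the mean shift $\mu$ (which will be $\Theta(\sqrt{\log k})$ in the intended application) does not degrade the anti-concentration step, because that bound depends only on the variance of $\inangle{\widetilde w_i, a_i}$; this is precisely what lets the argument go through uniformly over shifted Gaussian ensembles and ultimately produce a nonnegative $M$ in Proposition~\ref{prop:rand-rect-singular-value}.
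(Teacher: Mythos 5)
Your proposal is correct and follows essentially the same route as the paper: part (ii) by Gaussian tails plus a union bound over the $k^2$ entries, and part (i) by reducing $s_k(A)$ to the leave-one-out distances $\min_i\operatorname{dist}(a_i,V_i)/\sqrt{k}$ and lower-bounding each distance via mean-independent Gaussian anti-concentration along a unit normal to $V_i$. The only cosmetic difference is that you derive the leave-one-out inequality from $\norm{A^{-1}}_F$ (and are slightly more explicit about the independence of $\widetilde w_i$ from $a_i$), whereas the paper cites the inequality as standard.
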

\begin{proof}

	We will show separately that both the required properties hold with high probability, and do a union bound over the failure of these two events.
	Part (ii) of the claim just follows by applying standard Gaussian tail bounds for a fixed entry of the $k \times k$ matrix, and then using a union bound over all the $k^2$ entries.

	We now focus on part (i) of the claim. This follows a standard argument using anti-concentration bounds (or small ball probability). 
	Let $a_1, a_2, \dots, a_k \in \R^k$ represent the columns of $A$.  The least singular value of $A$ can be lower-bounded using the {\em leave-one-out} distance $\ell(A)$ which is defined as
	\begin{align}
		\ell(A)=\min_i \text{dist}(a_i, V_{-i}),
	\end{align}
	where $V_{-i}=\text{span}\{a_j : j\neq i\}$ and $\text{dist}(x, V)= \min_{v \in V} \norm{x-v}_2 $ is the perpendicular $\ell_2$ distance between $x$ and the subspace $V$. The least singular value $s_k(A)$ is related to $\ell(A)$ by
	\begin{equation}
		\frac{\ell(A)}{\sqrt{k}}\le s_{k}(A) \le \ell(A). \label{eq:leave-one-out}
	\end{equation}
	We now lower-bound the leave-one-out distance $\ell(A)$.
	Fix a column $i \in [k]$. Let $u$ be any unit vector in the subspace orthogonal to $V_{-i}=\text{span}\big(\{a_j: j \in [k] \setminus \{i\} \}\big)$; note that such a direction exists since $\text{dim}(V_{-i}) \le k-1$. Moreover, since the column $a_i \sim N(\mu {\bf 1}_k,I)$ where ${\bf 1}_k=(1,1,\dots, 1)$ is the all-ones vector, we have $a_i^\top u \sim N(z, 1)$ where $z = \mu u^\top {\bf}_k$.  From the anti-concentration of Gaussian $N(z,1)$, we have for an absolute constant $c>0$,
	$$\Pr\big[|u^\top a_i| \le \delta \big] \le \sup_{t \in \R} \Pr_{g\sim N(0,1)}\big[ g \in [t-\delta, t+\delta] \big] \le c \delta.$$
	By picking $\delta = 1/(c(k+1)^{9})$, we have with probability at least $1-\tfrac{1}{(k+1)^{9}}$,
	\begin{align*}
		\text{dist}(a_i, V_{-i}) & \ge |u^\top x| \ge \frac{1}{c (k+1)^{9}}.
	\end{align*}
	By applying a union bound over all the columns $i \in [k]$, we have $\ell(A) \ge 1/(k+1)^9$ with probability at least $1-\tfrac{1}{(k+1)^8}$. By applying \eqref{eq:leave-one-out}, we see that part (i) of the lemma also holds.

\end{proof}

\begin{proof}[Proof of Proposition~\ref{prop:rand-rect-singular-value}]

	Fix $i \in [k]$. The matrix $M[i]$ can be written in terms of the $(k-1) \times (k-1)$ submatrix $U^{(i)}$ as $M[i]=U^{(i)} (U^{(i)})^\top - \varepsilon I_{k-1}$. Each submatrix $U^{(i)}$ is a random matrix in $\R^{(k-1) \times (k-1)}$ with i.i.d. entries drawn from $N(\mu, 1)$ with $\mu > c_0 \sqrt{\log k}$. By applying  Lemma~\ref{lem:rand-singular-value} with $k-1$ and choosing $c_0 > 2c_3$, we get that with probability at least $1-1/k^8$ that $\sigma_{k-1}(U^{(i)})^2 \ge 2c_1/k^{c_2}$. Hence,
	$$ \sigma_{k-1}(M[i]) = \sigma_{k-1}\Big(U^{(i)} (U^{(i)})^\top \Big)- \varepsilon \ge \sigma_{k-1}(U^{(i)})^2 - \frac{c_1}{k^{c_2}} \ge \frac{c_1}{k^{c_2}}.$$
	Applying a union bound over all $i \in [k]$ proves the part (i) of the lemma.

	Part (ii) follows since $U$ (and hence $U U^\top$) has nonnegative entries and $I$ only has diagonal entries. So all the off-diagonal entries of $M$ are nonnegative. The non-negativity of the diagonal entries follows from the positive semi-definiteness of the $M[i]$.

	Finally part (iii) follows since $U U^\top$ is of rank $(k-1)$; hence $\lambda_k(M) =  - \varepsilon$, which gives the required bound.
\end{proof}

\subsection{Deterministic construction}
\label{subsec:deterministic-construction}

We provide another construction of $k \times k$ strictly pseudo-PD matrices for any \(k \ge 2\).  Unlike the construction given in Appendix \ref{subsec:probabilistic-construction}, this construction is fully deterministic.  However, it includes both positive and negative entries, making it an arguably less natural cost matrix for a weighted graph.

\begin{definition}[Almost-average matrix]
	\label{def:almost-average-mat}
	We define the \(k \times k\) almost-average matrix \(M\) as follows:
	\[M_{ij} = \begin{cases} 1 & \text{for } i = j,\\ -\frac{1}{k- 1.5} & \text{for } i \neq j. \end{cases}\]
\end{definition}

Now we show that \(M\) is strictly pseudo-PD:

\begin{proof}
	First we show that \(M\) is not positive semidefinite.  Consider a test vector \(x\) of all 1's:
	\begin{align*}
		(Mx)_i & = 1 - \frac{k - 1}{k - 1.5} = - \frac{0.5}{k - 1.5} \\
		Mx     & = -\frac{1}{2k - 3} \cdot x
	\end{align*}
	So \(x\) is an eigenvector with a negative eigenvalue.

	Now we show \(M[i] \succeq 0\) for any $i \in [k]$. (Recall that $M[i] \in \sym^{(k - 1) \times (k - 1)}$ denotes the submatrix of $M$ formed by deleting the $i$th row and column.)
	To see this, note that $M[i]$ is strictly diagonally dominant for any $i$ (for every row, the sum of the magnitudes of the non-diagonal entries is less than the magnitude of the diagonal entry).  Any symmetric, strictly diagonally dominant matrix with nonnegative diagonal entries is positive definite.  So \(M[i]\) is indeed positive definite.
\end{proof}

We additionally note that when the cost matrix takes the form \eqref{eq:Yaxial-local-min-obj} with $M$ defined as in Definition \ref{def:almost-average-mat} (setting $k = n / 2$), one can show that \eqref{eq:MC-SDP} has a unique optimal rank-one solution. This implies in particular that if we view this instance of \eqref{eq:MC-SDP} as a convex relaxation of a corresponding Max-Cut instance (albeit with perhaps unusual negative edge weights modeling ``attractions''), the relaxation is tight. The following proposition will be used in Section \ref{sec:experiments} (Experiments).

\begin{proposition}[Optimal solution for cost matrix arising from almost-average matrix]
	\label{prop:opt-almost-average}
	For any $n \ge 4$, consider the instance of \eqref{eq:MC-SDP} with cost matrix
	\begin{align}
		\label{eq:almost-avg-cost}
		A =
		\begin{bmatrix}
			M & M \\
			M & M
		\end{bmatrix}
	\end{align}
	where $M \in \sym^{\frac{n}{2} \times \frac{n}{2}}$ is defined as in Definition \ref{def:almost-average-mat} (setting $k = n / 2$). The unique optimal solution of such an instance is the matrix $J \in \sym^{n \times n}$ consisting of all $1$'s, implying in particular that the optimal objective value is
	\begin{align*}
		4p \inparen{1 - \frac{p - 1}{p - 1.5}},
	\end{align*}
	where $p$ is shorthand for $n / 2$.
\end{proposition}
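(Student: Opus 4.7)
The plan is to prove optimality and uniqueness of $X = J = \mathbf{1}_n \mathbf{1}_n^\top$ by exhibiting a matching dual certificate. The dual of \eqref{eq:MC-SDP} asks to maximize $\mathbf{1}_n^\top \lambda$ over $\lambda \in \R^n$ subject to $A - \diag(\lambda) \succeq 0$. I would guess the dual solution by enforcing complementary slackness with $J$: the equation $\bigl(A - \diag(\lambda^\star)\bigr) \mathbf{1}_n = 0$ forces $\lambda^\star_i = (A\mathbf{1}_n)_i$. Using the block form in \eqref{eq:almost-avg-cost} together with Definition~\ref{def:almost-average-mat}, a direct computation gives $(A\mathbf{1}_n)_i = 2 \inparen{1 - (p-1)/(p-1.5)} = -1/(p-1.5)$ for every $i$, so the candidate is the uniform $\lambda^\star_i \equiv -1/(p-1.5)$.

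The technical heart is to verify that $S := A - \diag(\lambda^\star) = A + \frac{1}{p-1.5} I_n$ is positive semidefinite with null space exactly $\operatorname{span}(\mathbf{1}_n)$. For this I would use the Kronecker factorization $A = \begin{bmatrix} 1 & 1 \\ 1 & 1 \end{bmatrix} \otimes M$, which reduces the spectrum of $A$ to the product of the spectra of the two factors. The $2 \times 2$ block has eigenvalues $2$ on $\mathbf{1}_2$ and $0$ on $(1, -1)$, while $M$ satisfies $M \mathbf{1}_p = -\frac{1}{2p-3}\mathbf{1}_p$ and $M v = \frac{2p-1}{2p-3} v$ for every $v \perp \mathbf{1}_p$ (each a one-line check from Definition~\ref{def:almost-average-mat}). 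Consequently, $A$ has eigenvalues $-2/(2p-3)$ on the line $\mathbf{1}_n$ with multiplicity one, $(4p-2)/(2p-3)$ with multiplicity $p-1$, and $0$ with multiplicity $p$. Shifting by $\frac{1}{p-1.5} I_n = \frac{2}{2p-3} I_n$ sends the unique negative eigenvalue precisely to zero while keeping all others strictly positive; this is exactly the desired PSD-ness and one-dimensional null space.

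Once this is established, standard SDP duality finishes the argument. Dual feasibility of $\lambda^\star$ gives the lower bound $\mathbf{1}_n^\top \lambda^\star = -2p/(p-1.5) = 4p \inparen{1 - (p-1)/(p-1.5)}$; since $\inangle{A, J} = \mathbf{1}_n^\top A \mathbf{1}_n$ equals this same value (indeed, by construction $A\mathbf{1}_n = \lambda^\star_1 \mathbf{1}_n$), weak duality pins down the optimal value and certifies $J$ as primal-optimal. For uniqueness, any optimal $X \succeq 0$ must satisfy $\inangle{S, X} = 0$; combined with $S \succeq 0$, this forces $X S = 0$, so the range of $X$ lies in the one-dimensional space $\operatorname{span}(\mathbf{1}_n)$, i.e., $X = c \mathbf{1}_n \mathbf{1}_n^\top$ for some $c \ge 0$. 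The feasibility constraint $X_{ii} = 1$ then forces $c = 1$, so $X = J$.

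The only non-routine ingredient is recognizing the Kronecker product structure of $A$, which collapses the spectral analysis of the $n \times n$ matrix $S$ to a few lines; after that, everything reduces to standard complementary-slackness bookkeeping. In particular, the one-dimensional null space of $S$ is what buys uniqueness, and is also the reason this instance has the qualitatively different behavior (a unique rank-one optimum, rather than the rich antipodal family near $\yaxial$ visible at the spurious local minimum) alluded to in the paragraph preceding the proposition.
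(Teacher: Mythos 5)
Your proof is correct, and its skeleton (guess the uniform dual multiplier $\lambda^\star_i = -1/(p-1.5)$ from complementary slackness with $J$, verify $S = A - \diag(\lambda^\star) \succeq 0$ via the Kronecker factorization $A = \bigl[\begin{smallmatrix}1 & 1\\ 1 & 1\end{smallmatrix}\bigr] \otimes M$, then close with duality) is the same as the paper's. Where you genuinely diverge is in two sub-steps, and in both cases your route is more elementary and self-contained. First, to control the spectrum of $M$, the paper invokes its Lemma on strictly pseudo-PSD matrices having exactly one negative eigenvalue, whose proof goes through a nontrivial theorem of Boumal--Voroninski--Bandeira about faces of the elliptope; you instead just diagonalize $M$ outright (it is a multiple of the identity minus a multiple of the all-ones matrix, so $\mathbf{1}_p$ and its orthogonal complement are eigenspaces), which costs two lines and needs no external input. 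Second, for uniqueness the paper cites the extreme-point property of $J$ together with a strict-complementary-slackness proposition from Waldspurger--Waters; you argue directly that any optimal $X$ satisfies $\inangle{S, X} = 0$, hence $SX = 0$ by PSD-ness of both factors, hence $\operatorname{range}(X) \subseteq \ker S = \operatorname{span}(\mathbf{1}_n)$, and the unit diagonal forces $X = J$. The paper's citations buy brevity and reuse machinery already present elsewhere in the paper (Lemma~\ref{lem:one-neg-eig} is also used to motivate the structure of strictly pseudo-PSD matrices); your version buys a proof readable in isolation with no dependence on the SDP-faces literature. Both are complete and correct.
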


Clearly one could extend Proposition \ref{prop:opt-almost-average} to the case where \eqref{eq:almost-avg-cost} is arbitrarily shifted by a diagonal matrix as in, e.g., Lemma \ref{lem:Yaxial-local-min}, but we state it this way for simplicity and because we will use precisely matrices of the form \eqref{eq:almost-avg-cost} in Section \ref{sec:experiments} (Experiments). Proposition \ref{prop:opt-almost-average} is interesting because it provides an example of an instance of \eqref{eq:MC-SDP} where $\yaxial \yaxial^\top$ and the unique globally optimal solution $J$ are qualitatively very different (e.g., one is rank $n / 2$ and the other rank 1).

Before giving the proof of Proposition \ref{prop:opt-almost-average}, we make the useful observation that \textit{any} strictly pseudo-PSD (and therefore also strictly pseudo-PD) matrix has at most one negative eigenvalue.\footnote{Whenever we use the phrases ``at most one negative eigenvalue,'' ``exactly one negative eigenvalue,'' etc., we are counting for multiplicity. So, for example, if a matrix has the negative eigenvalue -2 with multiplicity 4, this is counted as four separate negative eigenvalues (and, e.g., such a matrix could not be strictly pseudo-PSD due to Lemma \ref{lem:one-neg-eig}).}

\begin{lemma}
	\label{lem:one-neg-eig}
	Let $B \in \sym^{k \times k}$ be a strictly pseudo-PSD matrix for any $k \ge 2$. Then $B$ has exactly one negative eigenvalue.
\end{lemma}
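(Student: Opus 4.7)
}

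The plan is a simple dimension-counting argument combined with the pseudo-PSD hypothesis. First, since $B \nsucceq 0$ by the definition of strictly pseudo-PSD, $B$ has at least one negative eigenvalue, so it suffices to prove $B$ has at most one negative eigenvalue. I will show this by contradiction: suppose $B$ has at least two negative eigenvalues (counted with multiplicity), and let $V \subseteq \R^k$ be the span of eigenvectors corresponding to negative eigenvalues, so that $\dim V \ge 2$ and $x^\top B x < 0$ for every nonzero $x \in V$ (since $B$ is negative definite on $V$ by the spectral theorem).

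Next, I will exploit the fact that a coordinate projection on a two-dimensional subspace must have nontrivial kernel. Fix any index $i \in [k]$; the linear functional $\ell_i : V \to \R$ given by $\ell_i(x) = x_i$ has image of dimension at most $1$, hence a kernel of dimension at least $\dim V - 1 \ge 1$. Pick any nonzero $x \in V$ with $x_i = 0$, and let $\tilde{x} \in \R^{k-1}$ denote $x$ with its $i$th entry removed. By construction of $B[i]$, we have
\begin{align*}
    x^\top B x = \tilde{x}^\top B[i] \tilde{x}.
\end{align*}

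The pseudo-PSD hypothesis gives $B[i] \succeq 0$, so the right-hand side is nonnegative, while $x \in V$ nonzero forces the left-hand side to be strictly negative, a contradiction. Hence $B$ has at most one negative eigenvalue, and combined with the earlier observation, exactly one. The argument is quite short, so there is no real obstacle; the only mild subtlety is being careful that ``at least two negative eigenvalues'' is interpreted with multiplicity so that the negative eigenspace indeed has dimension at least two, which matches the counting convention already clarified in the footnote preceding the lemma.
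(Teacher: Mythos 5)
Your proof is correct, and it takes a genuinely different route from the paper's. The paper argues indirectly: it builds the block cost matrix $\begin{bmatrix} B & B \\ B & B \end{bmatrix}$, observes via Proposition \ref{lem:Yaxial-second-order-crit} that $\yaxial$ is a second-order critical point of the associated \eqref{eq:MC-BM} instance, invokes Theorem 3.4 of \cite{BVB18} to conclude that this block matrix has at most one negative eigenvalue, and then transfers the bound back to $B$ through the Kronecker-product spectrum identity. Your argument is a short, self-contained piece of linear algebra: if the negative eigenspace $V$ had dimension at least two, then for any fixed $i$ the coordinate functional $x \mapsto x_i$ would vanish on some nonzero $x \in V$, and the identity $x^\top B x = \tilde{x}^\top B[i]\, \tilde{x}$ together with $B[i] \succeq 0$ contradicts negative definiteness of $B$ on $V$. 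This is essentially a direct proof of the relevant half of the Cauchy eigenvalue interlacing inequality ($\lambda_{k-1}(B) \ge \lambda_{k-1}(B[i]) \ge 0$), and it even shows something slightly stronger than the lemma needs: a single PSD principal $(k-1)\times(k-1)$ submatrix already forces at most one negative eigenvalue, whereas the pseudo-PSD hypothesis supplies all $k$ of them. What your approach buys is elementarity and independence from the Burer-Monteiro landscape machinery; what the paper's approach buys is a conceptual link between the pseudo-PSD condition and the rank/eigenvalue bounds of \cite{BVB18} that underlie the rest of the paper. All the steps you state — the spectral-theorem fact that $B$ is negative definite on $V$, the kernel dimension count, and the restriction identity for the quadratic form when $x_i = 0$ — are correct as written.
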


\begin{proof}
	Any strictly pseudo-PSD matrix has at least one negative eigenvalue by definition. We show that if $B$ has more than one negative eigenvalue, one can construct an instance of \eqref{eq:MC-BM} which contradicts a theorem due to \cite{BVB18}. (It is the same theorem which ultimately yields the result that when $p > n / 2$, \eqref{eq:MC-BM} has no spurious second-order critical points.) Toward this goal, consider the instance of \eqref{eq:MC-BM} with cost matrix
	\begin{align*}
		A =
		\begin{bmatrix}
			B & B \\
			B & B
		\end{bmatrix} \in \sym^{n \times n},
	\end{align*}
	where we have defined $n := 2k$ for notational brevity. Due to Proposition \ref{lem:Yaxial-second-order-crit}, the axial position $\yaxial$ is a second-order critical point for this instance. Furthermore, it is easy to see that the multiplier $\nu \in \R^n$ associated with $\yaxial$ due to \eqref{eq:nu} is 0. Then Theorem 3.4 from \cite{BVB18} gives that $A - \diag(\nu) = A$ has at most
	\begin{align*}
		\floor*{
			\frac{1}{k} \inparen{ \dim \F_{\yaxial \yaxial^\top } - \frac{k(k + 1)}{2} + n }
		} =
		\floor*{
			\frac{1}{k} \inparen{ n - k }
		} = 1
	\end{align*}
	negative eigenvalue. Here, $\F_{\yaxial \yaxial^\top }$ denotes the face of the convex feasible region of \eqref{eq:MC-SDP} (also known as the elliptope) associated with $\yaxial \yaxial^\top$. (In other words, $\yaxial \yaxial^\top$ is in the relative interior of $\F_{\yaxial \yaxial^\top }$; see Definition 2.5 and Proposition 2.7 in \cite{BVB18}.) Above, we used the fact that $\dim \F_{\yaxial \yaxial^\top } = \frac{k(k + 1)}{2} - k$ due to \cite[Prop. 2.7]{BVB18}.

	Next, note that
	\begin{align*}
		A =
		\begin{bmatrix}
			B & B \\
			B & B
		\end{bmatrix} =
		\begin{bmatrix}
			1 & 1 \\
			1 & 1
		\end{bmatrix}
		\otimes B,
	\end{align*}
	where $\otimes$ denotes the Kronecker product. Since the eigenvalues of $\begin{bmatrix}
			1 & 1 \\
			1 & 1
		\end{bmatrix}$ are $\inbraces{0, 2}$, this decomposition implies that if $B$ had more than one negative eigenvalue, $A$ would have more than one negative eigenvalue, a contradiction.
\end{proof}

Now we prove the main claim:

\begin{proofof}{Proposition \ref{prop:opt-almost-average}}
	$p$ will be used as shorthand for $n / 2$ throughout this proof.
	Due to, e.g., \cite[Prop. 1]{WW20} or \cite[Prop. 2.8]{BVB18}, $J$ is optimal if there exists some $\beta \in \R^n$ such that, defining $S := A - \diag(\beta)$, we have $SJ = 0$ and $S \succeq 0$.\footnote{This comes from the fact that if these conditions are satisfied, the primal-dual pair $(J, \beta)$ satisfies the KKT conditions of \eqref{eq:MC-SDP}, implying optimality since \eqref{eq:MC-SDP} is a convex program.} We claim these conditions hold when each entry of $\beta$ is set to $b := 2\inparen{1 - \frac{p - 1}{p - 1.5}}$. Indeed, $SJ = 0$ can be observed directly. As for $S \succeq 0$, note that
	\begin{align*}
		S = \inparen{\begin{bmatrix}
				1 & 1 \\
				1 & 1
			\end{bmatrix} \otimes M} - \diag(\beta).
	\end{align*}
	Since the spectrum of $\begin{bmatrix}
			1 & 1 \\
			1 & 1
		\end{bmatrix}$ is $\inbraces{0, 2}$, the spectrum of $S$, denoted $\sigma(S)$, is precisely
	\begin{align}
		\label{eq:spectrum-of-S}
		\sigma(S) = \inbraces{- b} \cup \inbraces{2 \lambda - b : \lambda \in \sigma(M)}.
	\end{align}
	Clearly $-b \ge 0$, so all that is left is to check that all elements of $\inbraces{2 \lambda - b : \lambda \in \sigma(M)}$ are nonnegative. Due to Lemma \ref{lem:one-neg-eig}, $M$ has at most one negative eigenvalue. (Note that any strictly pseudo-PD matrix is strictly pseudo-PSD.) Furthermore, we found in the proof that $M$ is strictly pseudo-PD (directly below Definition \ref{def:almost-average-mat}) that the all 1's vector is an eigenvector of $M$ with eigenvalue $- \frac{0.5}{p - 1.5}$. Thus, this is the single negative eigenvalue of $M$. Finally, note that
	\begin{align*}
		- \frac{1}{p - 1.5} - b = - \frac{1}{p - 1.5} - 2\inparen{1 - \frac{p - 1}{p - 1.5}}
		= 0,
	\end{align*}
	so we conclude that $S \succeq 0$. Thus, we have established that $J$ is optimal.

	As for the uniqueness of $J$, it is a classical result that $J$ is an extreme point of the feasible region of \eqref{eq:MC-SDP} (aka the elliptope)---see for example Definition 2.6 and Proposition 2.7 in \cite{BVB18} or Appendix F.1 in \cite{WW20}. \cite[Prop. 2]{WW20} then gives that if strict complementary slackness holds, meaning $\rank(S) = n - \rank(J) = n - 1$, then $J$ is the unique optimal solution. Indeed, this follows due to \eqref{eq:spectrum-of-S}, the fact that $M$ only has a single negative eigenvalue, and the fact that $-b > 0$ for $p \ge 2$.
\end{proofof}

\section{Experiments}
\label{sec:experiments}
In this section, we empirically evaluate our construction of a spurious local minimum for \eqref{eq:MC-BM} in the setting where \(p = n/2\) (the largest possible value of \(p\) before we are guaranteed to have no spurious minima). Our experiments suggest that the spurious local minima we construct have surprisingly large basins of convergence. (In comparison, our theoretical results only guarantee the existence of \textit{some} positive measure basin of convergence.)

Our code and data can be found at \url{https://github.com/vaidehi8913/burer-monteiro}. This repository also contains a link to a visualizer for the $p = 2$ and $p = 3$ settings.

\paragraph{Instance generation.} For our experiments, we use the deterministic construction of pseudo-PD matrices given in Section \ref{subsec:deterministic-construction}. In other words, the cost matrix always takes the form
\begin{align*}
	A =
	\begin{bmatrix}
		M & M \\
		M & M
	\end{bmatrix}
\end{align*}
where $M \in \sym^{\frac{n}{2} \times \frac{n}{2}}$ is defined as in Definition \ref{def:almost-average-mat} (setting $k = n / 2$). We run trials for \(n = 4, 50, 200, 1000\), each time setting \(p = n/2\). Recall that due to Lemma \ref{lem:Yaxial-local-min}, the axial position $\yaxial$ is a spurious local minimum for such cost matrices. Furthermore, due to Proposition \ref{prop:opt-almost-average}, we know precisely what the optimal value is for such instances, allowing us to distinguish whether the optimization algorithm converged to a spurious point or a global optimum.

\paragraph{Optimization setup.}
We use the standard trust-region solver with default settings from the MATLAB manifold optimization package Manopt \cite{BMAS14}. It is a second-order method which uses the gradient of the objective function (which we provide) and an approximation of the Hessian of the objective function found using finite differences (this is done automatically by their implementation).

\begin{figure}[h]
	\centering
	\includegraphics[scale=0.5]{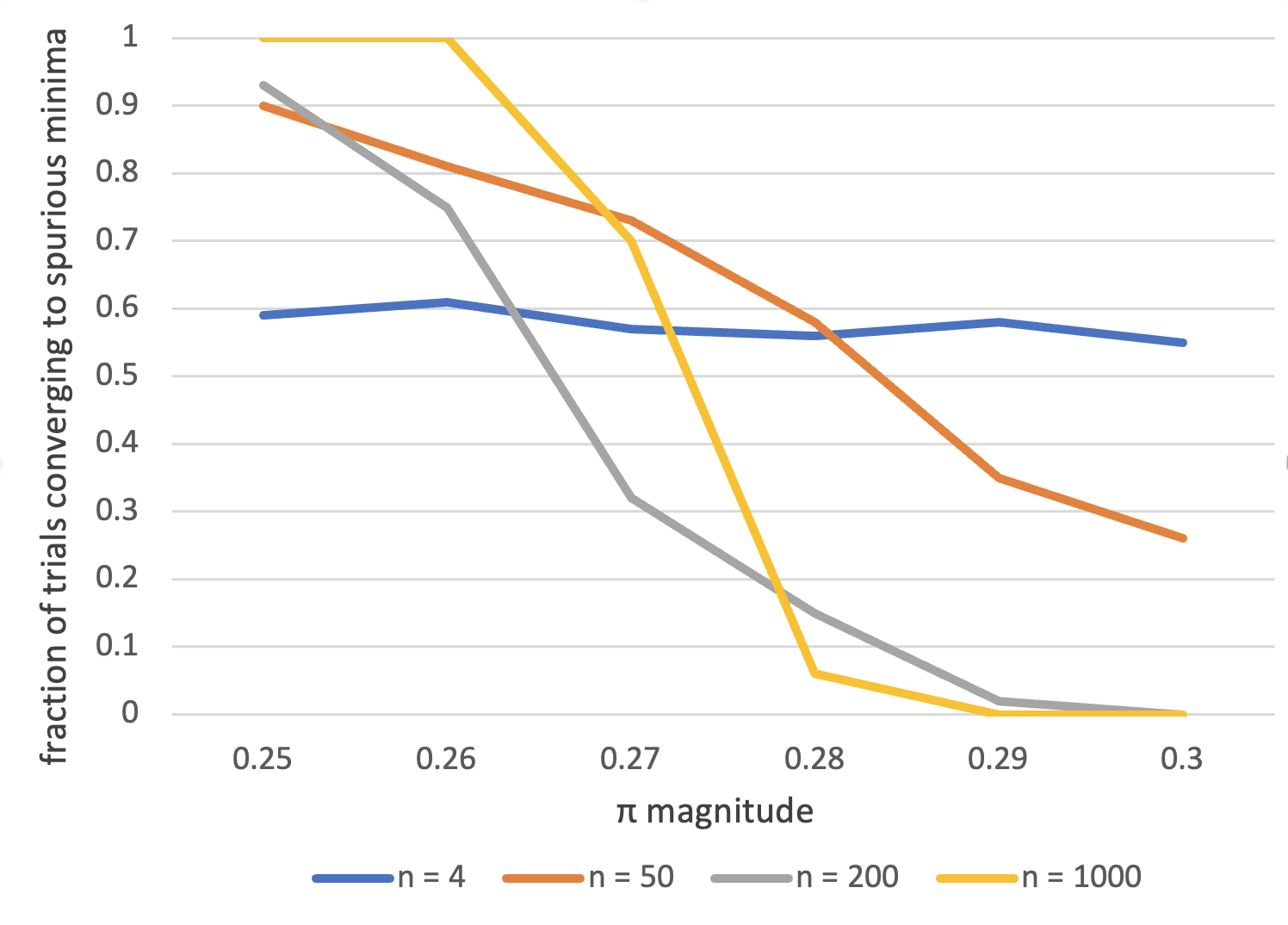}
	\caption{A plot summarizing our empirical results (data in Figure \ref{fig:deterministic-raw-data}). Note that there appears to be a phase transition at \(\pi \approx 0.27\), where trials almost always converge to spurious points for smaller \(\pi\) and almost always converge to global optima for larger \(\pi\).  This phase transition appears to grow sharper with larger values of \(n\).}
	\label{fig:deterministic-chart}
\end{figure}

\paragraph{Initialization.} In each trial, we sample the initialization point $Y^{(0)}$ from a neighborhood of the axial position $\yaxial$. We make use of the fact that $\M_p$ is a product manifold and measure the distance to initialization in a row-wise manner, making the following definition:

\begin{definition}[$\beta$-close]
	\label{def:beta-close}
	We say $Y, Y' \in \M_p$ are $\beta$-close if $\norm{Y_i - Y_i'} \le \beta$ for all $i \in [n]$, where $Y_i \in \R^p$ denotes the $i$th row of $Y$.
\end{definition}

For each trial, we choose a perturbation magnitude \(\pi > 0\), which specifies how far from \(\yaxial\) our initialization point will be. We then generate a perturbation matrix \(\Delta \in \mathbb{R}^{n \times p}\).  Each entry of \(\Delta\) is drawn \(\sim \mathcal{N}(0, \frac{1}{p})\) (a Gaussian distribution with variance \(\frac{1}{p}\)).  This ensures that \(\Delta\) has approximately unit-norm rows.  Then our initial point is given by
\[ Y^{(0)} = \rownorm \left(\yaxial + \pi \Delta \right) ,\]
where $\rownorm : \R^{n \times p} \to \R^{n \times p}$ normalizes each row of the input. This ensures that $Y^{(0)}$ is effectively sampled a constant distance away from $\yaxial$ on $\MMC_p$ in a uniformly chosen direction.

\paragraph{Data collection.} We report the fraction of trials in which the algorithm converged to a spurious point. We note that all trials we ran resulted in convergence to a point with objective value 0 (the objective value of our constructed spurious local minimum $\yaxial$) or to a point with the optimal (negative) objective value due to Proposition \ref{prop:opt-almost-average}.

\paragraph{Results.} We provide a summary of our experimental results in Figure \ref{fig:deterministic-chart} and the full data in Figure \ref{fig:deterministic-raw-data}.  For \(n = 4\), we ran 1000 trials for each reported value of \(\pi\).  For \(n = 50, 200\) we ran 100 trials for each \(\pi\), and for \(n = 1000\) we ran 50 trials for each reported value of \(\pi\).

We note an interesting phase transition that seems to occur at \(\pi \approx 0.27\). For perturbations greater than this threshold, the algorithm seems to almost always converge to a global optimum. Below this threshold, the algorithm seems to almost always converge to a spurious point. This threshold appears to get sharper as \(n\) gets larger. This suggests there is some \(\beta^* \approx 0.27\) such that points \(\beta^*\)-close to \(\yaxial\) are very likely to converge to a spurious point, and vice versa. (It is also interesting that this family of cost matrices seems to have this phase transition at the same value for every \(n\).) All in all, our experiments suggest a much larger basin of convergence for spurious minima than our theoretical results guarantee.

\begin{figure}[h]
	\centering
	\begin{tabular}{rccccc}\toprule
		$\pi$  & $n = 4$, & $50$, & $200$, & $1000$ \\
		\midrule
		$0.3$  & 0.55     & 0.26  & 0      & 0      \\
		$0.29$ & 0.58     & 0.35  & 0.02   & 0      \\
		$0.28$ & 0.56     & 0.58  & 0.15   & 0.06   \\
		$0.27$ & 0.57     & 0.73  & 0.32   & 0.70   \\
		$0.26$ & 0.61     & 0.81  & 0.75   & 1      \\
		$0.25$ & 0.59     & 0.90  & 0.93   & 1      \\ \bottomrule
	\end{tabular}

	\caption{ \textit{(Complete Data)} For each combination of \(n, \pi\), we report the fraction of our trials that converged to spurious points. For \(n = 4\), we ran 1000 trials for each reported value of \(\pi\).  For \(n = 50, 200\), we ran 100 trials for each reported value of \(\pi\).  For \(n = 1000\), we ran 50 trials for each reported value of \(\pi\). }
	\label{fig:deterministic-raw-data}
\end{figure}

\section{Conclusion}
\label{sec:conclusion}

We show that the Burer-Monteiro method can fail for instances of the Max-Cut SDP, for rank up to \(p = \frac{n}{2}\).  To the best of our knowledge, prior to our work it was unknown whether the Burer-Monteiro method could fail for any instance of any SDP with rank above the Barvinok-Pataki bound.
We settle this question and thus justify the use of smoothed analysis to obtain guarantees for the Burer-Monteiro method.

There are many interesting potential follow-up directions to this work.  We provide one construction of \eqref{eq:MC-BM} instances with spurious local minima.  Our construction has, and relies on, many interesting properties and symmetries.  It is possible that some of these properties are necessary, and further analyzing this construction could give insight that allows us to fully characterize spurious minima for \eqref{eq:MC-BM} instances.
Analyzing this construction could also help us understand this interesting threshold phenomenon for \eqref{eq:MC-BM} when $p=n/2$---one dimension higher and there are not only no spurious local minima, there are no spurious second-order critical points at all.
Another potential direction is seeing if similar techniques can be used to construct instances with spurious local minima for SDPs with other structures (not just Max-Cut).

Lastly, we note that a limitation of our work is that it only points to the existence of local minima, and does not give a full characterization of when we can expect local minima to exist.  We also note that since this is a theoretical result about optimization landscapes, we do not foresee any adverse societal impacts of our work.



	{
		\bibliography{Other/references.bib}
	}

\newpage
\appendix

\section{Further discussion of prior work}
\label{app:prior-work}

The two most relevant papers to this work are \cite{BVB18} and \cite{WW20}. 
\cite{BVB18} excludes the presence of spurious second-order critical points for \eqref{eq:MC-BM} outside of a measure-zero set of cost matrices when $\frac{p (p + 1)}{2} > n$. (Furthermore, their result extends to a broad class of smooth SDPs.) Additionally, \cite{BVB18} shows that when $p > n / 2$, \eqref{eq:MC-BM} has no spurious second-order critical points. \cite{WW20} tightens the main lower bound of \cite{BVB18} to $\frac{p(p + 1)}{2} + p > n$ and also shows that when $\frac{p(p + 1)}{2} + p \le n$, there exists a set of cost matrices with non-zero measure whose corresponding instances of \eqref{eq:MC-BM} have spurious local minima. 

It was open to the best of our knowledge whether there exists any instance of \eqref{eq:MC-BM} with spurious second-order critical points when $\frac{p(p + 1)}{2} + p > n$. (In fact, this question was open for the broad class of smooth SDPs analyzed in \cite{BVB18}---see Section 6 in that paper.) We note that it is not clear how to extend the techniques of \cite{WW20} to the setting of our paper since their constructions critically rely on a technical assumption (the existence of ``minimally secant'' matrices) which provably never holds when $\frac{p(p + 1)}{2} > n$ (as they note in Appendix B). As a result, our paper takes a different approach.

There has also been a line of work \cite{BBJN18, PJB18, CM21} seeking to provide polynomial-time convergence guarantees to approximate global optima in a smoothed analysis setting. \cite{BBJN18} in particular performs smoothed analysis on an unconstrained quadratically-penalized version of \eqref{eq:MC-BM} (and its generalizations) and also provides a lower bound. However, their lower-bound construction does not apply to \eqref{eq:MC-BM} itself (or its generalizations). (In particular, their lower-bound construction sets the cost matrix to be 0, which does not work in our setting.)

Finally, \cite[Thm. 1]{MMMO17} implies that when the cost matrix $A$ comes from a weighted graph with nonnegative weights on the edges (as is typical in Max-Cut relaxations), any spurious local minimum still achieves the globally optimal value up to an $O(1 / p)$ multiplicative error. We note that as mentioned in Section \ref{sec:introduction}, there are many applications of \eqref{eq:MC-SDP} where $A$ may have negative entries and where the solution value is not as important as the optimal SDP solution itself (which is used to recover some ground-truth solution). As we showed in Proposition \ref{prop:opt-almost-average}, there exist instances of \eqref{eq:MC-SDP} where the optimal solution and the spurious point $\yaxial \yaxial^\top$ are qualitatively very different.

\section{Toolbox}
\label{app:toolbox}
\subsection{Proof of Proposition \ref{prop:char-first-order-opt}}




Proposition \ref{prop:char-first-order-opt} follows directly from Corollary 2.9 and Proposition 2.10 from \cite{BVB18}, which give analogous claims for a more general class of programs. In regards to Corollary 2.9 (the ``if'' direction), note that \eqref{eq:MC-BM} trivially satisfies Assumption 1.1a (which is equivalent to the linear independence constraint qualification, aka LICQ, holding over the entire feasible region). Proposition 2.10 (the ``only if'' direction) requires strong duality, and strong duality holds for the convex program \eqref{eq:MC-SDP} since it satisfies Slater's condition.

\subsection{Riemannian gradient and Riemannian Hessian for (\ref{eq:MC-BM})} 

\begin{proposition}[Riemannian gradient for \eqref{eq:MC-BM}]
	\label{prop:Riemannian-grad}
	The Riemannian gradient of $\obj$ at $Y \in \MMC_p$ is given by
	\begin{align}
		\label{eq:Riemannian-grad-restated}
		\grad \obj(Y) = 2 (A - \diag(\nu)) Y, \quad \textup{where } \nu_i := \sum_{j = 1}^n A_{ij} \inangle{Y_i, Y_j}, \textup{ for all $i \in [n]$}.
	\end{align}
	Here, $Y_i \in \R^p$ denotes the $i$th row of $Y$, taken as a column vector.
\end{proposition}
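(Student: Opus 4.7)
The plan is to use the standard characterization of the Riemannian gradient on an embedded Riemannian submanifold as the orthogonal projection of the Euclidean gradient onto the tangent space, as set up in Section \ref{subsec:Riemannian-derivatives}. The computation splits into three easy steps: compute the Euclidean gradient of $\obj$, describe the orthogonal projector onto $\T_Y \MMC_p$ explicitly, and combine them.

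First I would compute the Euclidean gradient of $\obj(Y) = \inangle{A, YY^\top} = \mathrm{tr}(Y^\top A Y)$ viewed as a function on $\R^{n \times p}$. Using the symmetry of $A$, a direct differentiation (or a one-line Taylor expansion $\obj(Y + H) = \obj(Y) + 2 \inangle{AY, H} + \inangle{A, HH^\top}$) gives $\nabla \obj(Y) = 2 AY$.

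Next I would describe the orthogonal projector $\Proj_Y : \R^{n \times p} \to \T_Y \MMC_p$ explicitly. Because $\MMC_p$ is the Cartesian product of $n$ unit spheres in $\R^p$ and the tangent space decomposes row-wise (Proposition \ref{def:MC-BM-tangent-space}), projection onto $\T_Y \MMC_p$ acts row-by-row: since $\norm{Y_i} = 1$, the orthogonal projection of a row $Z_i$ onto $\{u \in \R^p : \inangle{u, Y_i} = 0\}$ is $Z_i - \inangle{Z_i, Y_i} Y_i$. Stacking these back into a matrix yields the compact formula
\begin{align*}
    \Proj_Y(Z) = Z - \diag(\mu) Y, \quad \text{where } \mu_i = \inangle{Z_i, Y_i} \text{ for all } i \in [n].
\end{align*}

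Finally, I would apply $\Proj_Y$ to $\nabla \obj(Y) = 2AY$. The $i$th row of $AY$ is $\sum_{j=1}^n A_{ij} Y_j$, so
\begin{align*}
    \mu_i = \inangle{(AY)_i, Y_i} = \sum_{j=1}^n A_{ij} \inangle{Y_j, Y_i},
\end{align*}
which is exactly the $\nu_i$ defined in \eqref{eq:Riemannian-grad-restated}. Substituting back,
\begin{align*}
    \grad \obj(Y) = \Proj_Y(2AY) = 2AY - 2\diag(\nu) Y = 2(A - \diag(\nu))Y,
\end{align*}
as claimed. There is no real obstacle here; the only mild subtlety is verifying that the row-wise projection formula indeed gives the orthogonal projector with respect to the Frobenius inner product inherited from $\R^{n \times p}$, which follows because the tangent-space constraints $\inangle{Y_i, U_i} = 0$ are independent across rows and the ambient inner product likewise decomposes as a sum over rows.
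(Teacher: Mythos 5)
Your proposal is correct and follows essentially the same route as the paper's proof in Appendix B: both compute the Euclidean gradient $2AY$, identify the row-wise orthogonal projector $\Proj_Y(Z) = Z - \diag(\mu)Y$ with $\mu_i = \inangle{Z_i, Y_i}$, and compose the two. The only difference is cosmetic — you spell out the Euclidean gradient computation slightly more explicitly than the paper does.
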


\begin{proof}
	For a smooth objective function over a Riemannian submanifold of a vector space \cite[Def. 3.55]{Bou22}, the Riemannian gradient is given by the orthogonal projection of the Euclidean gradient to the tangent space \cite[Prop. 3.61]{Bou22}. Since $\M_p$ is a Riemannian submanifold of $\R^{n \times p}$ \cite[Sec. 2.1]{BVB18}, applying this yields
	\begin{align}
		\label{eq:Riemannian-grad-intermediate}
		\grad \obj(Y) = \Proj_Y \inparen{ 2 A Y} = 2 \Proj_Y (AY),
	\end{align}
	where the linear map $\Proj_Y : \R^{n \times p} \to \T_Y \MMC_p$ denotes the orthogonal projector onto $\T_Y \MMC_p \subseteq \R^{n \times p}$, i.e., $\Proj_Y (Z) = \argmin_{U \in \T_Y \MMC_p} \norm{U - Z}$. Since $\T_Y \MMC_p$ consists of those matrices in $\R^{n \times p}$ which are row-wise orthogonal to $Y$ (Proposition \ref{def:MC-BM-tangent-space}), it is clear that the orthogonal projection of $Z \in \R^{n \times p}$ onto $\T_Y \MMC_p$ is found by going row by row over $Z$ and each time deleting the component of row $i$ of $Z$ which lies in the span of row $i$ of $Y$. In other words,
	\begin{align}
		\label{eq:Proj_Y}
		\Proj_Y(Z) = Z - \diag(\mu) Y, \quad \text{where } \mu_i := \inangle{Z_i, Y_i}, \text{ for all $i \in [n]$}.
	\end{align}
	\eqref{eq:Riemannian-grad-intermediate} and \eqref{eq:Proj_Y} together yield our result.
\end{proof}

The Riemannian Hessian of $\obj$ at $Y \in \MMC_p$, $\Hess \obj(Y)$, is a linear, symmetric map from $\T_Y \MMC_p$ to $\T_Y \MMC_p$. For a Riemannian submanifold of a vector space such as $\MMC_p$, this is given by the classical differential of (a smooth extension of) $\grad \obj(Y)$, projected to the tangent space \cite[Cor. 5.16]{Bou22}.

We note that the Riemannian Hessian is the natural Riemannian analog of the Euclidean Hessian, and while the Euclidean Hessian $\nabla^2 f (x)$ of a function $f : \R^m \to \R$ at $x \in \R^m$ can be thought of as a symmetric $m \times m$ matrix containing the second-order partial derivatives of $f$ at $x$, it is often best understood as a linear map $\nabla^2 f(x) : \R^m \to \R^m$ defined via $\nabla^2 f(x)[u] = H u$, where $H \in \sym^{m \times m}$ is the aforementioned ``matrix form'' of $\nabla^2 f(x)$. With this viewpoint, $\nabla^2 f(x)[u]$ is the directional derivative of the Euclidean gradient $\nabla f(x)$ in the direction $u$. Similarly, while the Riemannian Hessian of $\obj$ at $Y \in \MMC_p$, $\Hess \obj(Y)$, could be identified with a symmetric $\dim (\MMC_p) \times \dim (\MMC_p)$ matrix,\footnote{Note that for a smooth manifold $\M$, $\dim(\M)$ is defined as $\dim(\T_x \M)$, where $\T_x \M$ denotes the tangent space (a vector space) at $x \in \M$. ($\dim(\T_x \M)$ is independent of $x$.)} it is best understood as a linear map $\Hess \obj(Y): \T_Y \MMC_p \to \T_Y \MMC_p$, where $\Hess \obj(Y)[U]$ denotes the ``directional derivative'' of the Riemannian gradient in the direction $U$. (The correct definition of ``directional derivative'' in this case is the Riemannian connection \cite[Thm. 5.6]{Bou22}.) As is standard, we take the latter form to be the definition of the Riemannian Hessian \cite[Def. 5.14]{Bou22}.

\begin{proposition}[Riemannian Hessian for \eqref{eq:MC-BM}]
	\label{prop:Riemannian-Hessian}
	The Riemannian Hessian of $\obj$ at $Y \in \MMC_p$, acting on $U \in \T_Y \MMC_p$, is given by
	\begin{align*}
		\Hess \obj(Y) [U] = 2 \Proj_Y \bigg( (A - \diag(\nu))U \bigg),
	\end{align*}
	where the linear map $\Proj_Y : \R^{n \times p} \to \T_Y \MMC_p$ denotes the orthogonal projector onto $\T_Y \MMC_p \subseteq \R^{n \times p}$, i.e., $\Proj_Y (Z) = \argmin_{U \in \T_Y \MMC_p} \norm{U - Z}$. $\nu$ is defined as in \eqref{eq:Riemannian-grad-restated}.
\end{proposition}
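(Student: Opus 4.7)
The plan is to invoke the standard formula for the Riemannian Hessian on an embedded Riemannian submanifold of a Euclidean space, namely \cite[Cor. 5.16]{Bou22}, which states that for a smooth extension $\overline{G}$ of the Riemannian gradient $\grad \obj$ to an open neighborhood of $\MMC_p$ in $\R^{n \times p}$, one has $\Hess \obj(Y)[U] = \Proj_Y \bigl( D\overline{G}(Y)[U] \bigr)$ for $U \in \T_Y \MMC_p$, where $D$ denotes the ordinary (Fr\'echet) differential in $\R^{n \times p}$. Since the formula $2(A - \diag(\nu(Y)))Y$ from Proposition \ref{prop:Riemannian-grad} is defined for every $Y \in \R^{n \times p}$ (the quantities $\nu_i(Y) = \sum_j A_{ij} \langle Y_i, Y_j\rangle$ make sense without the unit-norm constraint), I would take this expression itself as the smooth extension $\overline{G}$.

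Next I would compute $D\overline{G}(Y)[U]$ using the product rule. Differentiating $2(A - \diag(\nu(Y)))Y$ in the direction $U$ yields two terms: a ``naive'' term $2(A - \diag(\nu(Y)))U$ from differentiating the trailing $Y$, and a correction term $-2\,\diag(D\nu(Y)[U])\,Y$ coming from the dependence of $\nu$ on $Y$. Applying $\Proj_Y$ to the first term gives exactly $2\Proj_Y\bigl((A - \diag(\nu))U\bigr)$, which is the claimed expression, so it remains to argue that $\Proj_Y$ kills the correction term.

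The key observation here is that for any vector $\mu \in \R^n$, the matrix $\diag(\mu) Y$ has $i$th row equal to $\mu_i Y_i$, which is a scalar multiple of $Y_i$. From the description of $\T_Y \MMC_p$ in Proposition \ref{def:MC-BM-tangent-space} as those matrices whose $i$th row is orthogonal to $Y_i$ for each $i$, the orthogonal complement of $\T_Y \MMC_p$ inside $\R^{n \times p}$ consists exactly of matrices of the form $\diag(\mu) Y$. Hence $\Proj_Y \bigl( \diag(D\nu(Y)[U]) Y \bigr) = 0$, and the correction term drops out regardless of what $D\nu(Y)[U]$ actually equals.

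The only step with any subtlety is the identification of the normal space as $\{\diag(\mu)Y : \mu \in \R^n\}$; I expect this to be the main (though still easy) obstacle, since everything else is a mechanical application of the product rule and the submanifold Hessian formula. This identification follows from a row-by-row argument: the $i$th row of any matrix in $\R^{n \times p}$ decomposes uniquely as a component along $Y_i$ (which has unit norm) plus a component in $Y_i^\perp$, and taking the components along $Y_i$ across all rows yields precisely $\diag(\mu)Y$ with $\mu_i = \langle Z_i, Y_i\rangle$, matching the expression for $\Proj_Y$ derived in \eqref{eq:Proj_Y}. Combining these observations completes the proof.
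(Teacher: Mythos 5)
Your proposal is correct, and it proves the formula by a genuinely different route than the paper does. The paper's own proof is essentially a citation: it appeals to Equation 2.7 of \cite{BVB18}, which gives the Riemannian Hessian for a general class of smooth SDPs, and notes that the multiplier there specializes to $\nu$ for \eqref{eq:MC-BM}. You instead derive the formula from first principles via \cite[Cor. 5.16]{Bou22}: extend $\grad \obj$ smoothly by the explicit expression $2(A - \diag(\nu(Y)))Y$, differentiate by the product rule, and observe that the correction term $-2\,\diag(D\nu(Y)[U])\,Y$ is annihilated by $\Proj_Y$ because each of its rows is a scalar multiple of the corresponding row of $Y$, hence lies in the normal space. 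That last observation is the real content of the proposition --- it explains why the multiplier $\nu$ appears ``frozen'' in the Hessian even though it depends on $Y$ --- and your row-by-row identification of the normal space as $\{\diag(\mu)Y : \mu \in \R^n\}$ (consistent with \eqref{eq:Proj_Y} and with $\norm{Y_i}=1$) is exactly right. The trade-off is the usual one: the paper's citation is shorter and inherits generality from \cite{BVB18}, while your computation is self-contained and makes transparent where the formula comes from. No gaps.
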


\begin{proof}
	This follows immediately from Equation 2.7 in \cite{BVB18}, which provides an expression for the Riemannian Hessian for a more general class of programs. (Their expression for the multiplier $\nu$, which they call $\mu$---see Equation 2.5 in that paper, is more complicated as it is for a more general class of programs, which is why we derived the Riemannian gradient from first principles in the proof of Proposition \ref{prop:Riemannian-grad}. However, the two expressions for this multiplier are ultimately equivalent for \eqref{eq:MC-BM} due to the uniqueness of the Riemannian gradient.) See also Section 7.7 of \cite{Bou22} for exposition (including an expression for the Riemannian Hessian) for a very general class of programs encompassing \eqref{eq:MC-BM}.
\end{proof}

\section{Extending construction of spurious local minimum to $p < n / 2$}
\label{app:extending-to-p<n/2}
Lemma \ref{lem:p-less-than-n/2} allows us to extend our construction of a spurious local minimum for the $p = n / 2$ case to $p < n / 2$. Indeed, it implies that our construction of a spurious local minimum for the instance of \eqref{eq:MC-BM} with associated feasible region $\MMC_{2p, p}$ yields a construction of a spurious local minimum for the instance of \eqref{eq:MC-BM} with associated feasible region $\MMC_{n', p}$, for all $n' \ge 2p$. Thus, one can construct an instance of \eqref{eq:MC-BM} with a spurious local minimum when $p < n / 2$ using the construction for the instance of \eqref{eq:MC-BM} associated with $\MMC_{2p, p}$.

We note that Lemma \ref{lem:p-less-than-n/2} also holds if you replace ``spurious local minimum'' with ``spurious first-order critical point'' or ``spurious second-order critical point,'' although we do not prove it here. However, the intuition is clear: we embed a ``bad instance'' into the higher-dimensional space, and design the cost matrix so that the bad instance does not interact with the added dimensions.

\begin{lemma}
	\label{lem:p-less-than-n/2}
	Let $(n, p) \in \N \times \N$ be such that there exists an instance of \eqref{eq:MC-BM} with cost matrix $A \in \sym^{n \times n}$ and feasible point $Y \in \MMC_{n, p}$ such that $Y$ is a spurious local minimum. Then for all $n' \ge n$, there exists a cost matrix $A' \in \sym^{n' \times n'}$ and a feasible point $Y' \in \MMC_{n', p}$ such that $Y'$ is a spurious local minimum for the instance of \eqref{eq:MC-BM} with cost matrix $A' \in \sym^{n' \times n'}$ and feasible region $\MMC_{n', p}$.
\end{lemma}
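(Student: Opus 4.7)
The plan is to embed the smaller bad instance block-diagonally into the larger one so that the extra dimensions have no influence on the objective, and then to show that both local minimality and spuriousness transfer directly from the original construction.

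Concretely, given a spurious local minimum $Y \in \MMC_{n,p}$ for the cost matrix $A \in \sym^{n \times n}$, I would set
\begin{align*}
  A' := \begin{bmatrix} A & 0 \\ 0 & 0 \end{bmatrix} \in \sym^{n' \times n'},
  \qquad
  Y' := \begin{bmatrix} Y \\ Y_0 \end{bmatrix} \in \MMC_{n', p},
\end{align*}
where $Y_0 \in \MMC_{n' - n, p}$ is an arbitrary choice of unit vectors in the last $n' - n$ rows (for instance, repeating any fixed unit vector). Feasibility of $Y'$ is immediate: the first $n$ rows have unit norm since $Y \in \MMC_{n,p}$, and the remaining rows have unit norm by construction. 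The key structural observation is that for any $Z = \begin{bmatrix} Z_1 \\ Z_2 \end{bmatrix} \in \MMC_{n', p}$ with $Z_1 \in \R^{n \times p}$ and $Z_2 \in \R^{(n' - n) \times p}$, the block form of $A'$ gives
\begin{align*}
  \inangle{A', ZZ^\top} = \inangle{A, Z_1 Z_1^\top},
\end{align*}
so the objective under $A'$ depends only on the first $n$ rows of the feasible point.

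To see local minimality of $Y'$, let $\epsilon > 0$ be a radius witnessing that $Y$ is a local minimum of the original instance. For any $Z \in \MMC_{n', p}$ with $\|Z - Y'\| < \epsilon$, the submatrix $Z_1 \in \MMC_{n, p}$ satisfies $\|Z_1 - Y\| \le \|Z - Y'\| < \epsilon$, so by the above identity and local minimality of $Y$,
\begin{align*}
  \obj(Z) = \inangle{A, Z_1 Z_1^\top} \ge \inangle{A, Y Y^\top} = \obj(Y').
\end{align*}
For spuriousness, since $Y$ is not a global minimum for the original instance there exists $W \in \MMC_{n,p}$ with $\inangle{A, WW^\top} < \inangle{A, YY^\top}$; then $W' := \begin{bmatrix} W \\ Y_0 \end{bmatrix} \in \MMC_{n', p}$ achieves $\obj(W') = \inangle{A, WW^\top} < \obj(Y')$, so $Y'$ is not a global minimum of the new instance.

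There is no substantive obstacle here, as the construction is essentially a direct sum: the only thing to verify carefully is that perturbations in $\MMC_{n', p}$ near $Y'$ project to perturbations in $\MMC_{n, p}$ near $Y$, which follows from the trivial norm bound $\|Z_1 - Y\| \le \|Z - Y'\|$. The same block-diagonal padding argument immediately gives analogous statements for first- and second-order critical points, since both Riemannian derivatives at $Y'$ decouple into a copy of the derivatives at $Y$ together with vanishing contributions from the padded rows, though this extension is not needed for the lemma as stated.
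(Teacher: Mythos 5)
Your proposal is correct and matches the paper's proof essentially verbatim: the same block-diagonal padding $A' = \begin{bmatrix} A & 0 \\ 0 & 0 \end{bmatrix}$ with arbitrary unit-norm rows appended to $Y$, the same observation that the objective depends only on the first $n$ rows, and the same norm bound $\|Z_1 - Y\| \le \|Z - Y'\|$ to transfer the local-minimality radius. Nothing is missing.
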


\begin{proof}
	We claim that the following construction works:
	\begin{align*} A' =
		\begin{bmatrix}
			A & 0 \\
			0 & 0
		\end{bmatrix} \in \sym^{n' \times n'},
		\qquad \qquad Y' =
		\begin{bmatrix}
			Y \\
			G
		\end{bmatrix} \in \MMC_{n', p},
	\end{align*}
	where $G \in \R^{(n' - n) \times p}$ is arbitrary except with the single restriction that all of its rows are unit vectors. Since $Y$ is a local minimum for the instance of \eqref{eq:MC-BM} with cost matrix $A$ and feasible region $\MMC_{n, p}$, there exists $\epsilon > 0$ such that for $Z \in \MMC_{n, p}$ with $\norm{Z - Y} < \epsilon$, we have $\inangle{A, YY^\top} \le \inangle{A, ZZ^\top}$. Now let $Z' \in \MMC_{n', p}$ with $||Z' - Y'|| < \epsilon$. Let $\bar{Z} \in \MMC_{n, p}$ denote the submatrix of $Z'$ consisting of the first $n$ rows. Then
	\begin{align*}
		\inangle{A', Y' Y'^\top } = \inangle{A, YY^\top} \le \inangle{A, \bar{Z} \bar{Z}^\top} = \inangle{A', Z' Z'^\top },
	\end{align*}
	since $||\bar{Z} - Y|| \le ||Z' - Y'|| < \epsilon$. Thus, $Y'$ is a local minimum.

	To see that $Y'$ is spurious, the spuriousness of $Y$ implies that there exists $V \in \MMC_{n, p}$ such that $\inangle{A, VV^\top} < \inangle{A, YY^\top}$. Then define $V' = \begin{bmatrix}
			V \\
			G
		\end{bmatrix} \in \MMC_{n', p}$, where $G \in \R^{(n' - n) \times p}$ is once again some matrix whose rows are unit vectors. Then,
	\begin{align*}
		\inangle{A', V' V'^\top} = \inangle{A, VV^\top} < \inangle{A, YY^\top} = \inangle{A', Y' Y'^\top}.
	\end{align*}
\end{proof}

\section{Further discussion of challenges for proving local minimality}
\label{app:challenges}
In this section, we further discuss challenges associated with proving $\yaxial$ is a local minimum in a ``traditional'' way. For example, \cite{WW20}, which constructs spurious local minima for \eqref{eq:MC-BM} when $p < \sqrt{2n}$, similarly first constructs spurious second-order critical points and then proves they are additionally local minima. However, their proof follows because their spurious second-order critical points are non-degenerate \cite[Def. 3 and Rem. 1]{WW20}, which corresponds to the rank of the Riemannian Hessian being sufficiently high. (See Remark 2 in that paper.) We show that \textit{every} spurious second-order critical point for \eqref{eq:MC-BM} is degenerate when $p$ is above the Barvinok-Pataki bound, meaning this approach won't work:

\begin{proposition}[Spurious second-order critical points are degenerate when $p \ge \sqrt{2n}$]
	\label{prop:degenerate-over-Barvinok-Pataki}
	Let $Y \in \MMC_p$ be a spurious second-order critical point for an (arbitrary) instance of \eqref{eq:MC-BM} where $\frac{p(p + 1)}{2} > n$. Then $Y$ is degenerate \textup{\cite[Def. 3 and Rem. 1]{WW20}}.
\end{proposition}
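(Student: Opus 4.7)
My plan is to exhibit a subspace of $\T_Y \MMC_p$ on which $\Hess \obj(Y)$ vanishes whose dimension strictly exceeds the ``trivial'' null space $\binom{p}{2}$ arising from the $O(p)$-invariance of \eqref{eq:MC-BM}; this is the notion of degeneracy in \cite[Def.~3, Rem.~1]{WW20}. The whole argument is first-order and uses only the Riemannian Hessian formula together with Proposition~\ref{prop:MC-BM-first-order-crit}, and I expect the Barvinok-Pataki threshold to drop out naturally at the end.

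The starting point is the first-order critical equation $(A - \diag(\nu))Y = 0$, which says that the columns of $Y$ lie in $\ker(A - \diag(\nu))$. Set $q := \dim \ker(A - \diag(\nu))$. Invoking the standard fact from \cite{BVB18} that a rank-deficient SOCP is automatically globally optimal, any \emph{spurious} SOCP must have $\rank(Y) = p$, so $q \ge p$. I then consider
\[
W := \{\, U \in \R^{n \times p} : (A - \diag(\nu))\,U = 0 \,\},
\]
of dimension $pq$. From the formula $\Hess \obj(Y)[U] = 2\Proj_Y\bigl((A - \diag(\nu))U\bigr)$, every element of $W$ is killed by the Hessian. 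Since $\T_Y \MMC_p$ is cut out in $\R^{n \times p}$ by the $n$ linear equations $\inangle{Y_i, U_i} = 0$, I get $\dim(W \cap \T_Y \MMC_p) \ge pq - n \ge p^2 - n$, and all of this subspace lies in $\ker \Hess \obj(Y)$.

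Next I would show that the infinitesimal $O(p)$-orbit $\{Y A' : A' \in \R^{p \times p},\ A'^\top = -A'\}$ is contained in $W \cap \T_Y \MMC_p$: each $Y A'$ is tangent (since $Y_i^\top A' Y_i = 0$ for skew $A'$) and has columns in $\mathrm{col}(Y) \subseteq \ker(A - \diag(\nu))$. Because $\rank(Y) = p$, the stabilizer of $Y$ in $O(p)$ is trivial, so this orbit has dimension exactly $\binom{p}{2}$, matching the baseline null-space dimension that WW20 subtract off in their definition of non-degeneracy. Finally, the Barvinok-Pataki hypothesis $\tfrac{p(p+1)}{2} > n$ (i.e.\ $n \le \tfrac{p(p+1)}{2} - 1$ by integrality) yields
\[
p^2 - n \;\ge\; p^2 - \tfrac{p(p+1)}{2} + 1 \;=\; \binom{p}{2} + 1,
\]
so $\ker \Hess \obj(Y)$ strictly contains the $O(p)$-orbit, which is exactly the statement that $Y$ is degenerate.

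The main obstacle is really Step~1, the appeal to $\rank(Y) = p$. If one tried to run the same count at a rank-deficient SOCP with $s := \rank(Y) < p$, then both sides of the comparison shrink: the ``flat'' null subspace has dimension only $\ge ps - n$, while the $O(p)$-stabilizer becomes positive-dimensional and the baseline null space drops to $\binom{p}{2} - \binom{p-s}{2}$. A short calculation shows the required inequality $ps - n > \binom{p}{2} - \binom{p-s}{2}$ reduces to $\tfrac{s(s+1)}{2} > n$, which need not hold. So the key qualitative input beyond the Hessian formula is the BVB18 fact that spurious SOCPs are necessarily full rank; given that, the dimension count is direct and the Barvinok-Pataki threshold appears at precisely the point where $p^2 - n$ overtakes $\binom{p}{2}$.
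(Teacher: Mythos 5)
Your proposal is correct and is essentially the paper's own argument in dual form: both rest on the BVB18 fact that a spurious second-order critical point must have $\rank(Y) = p$, deduce from $(A - \diag(\nu))Y = 0$ that $A - \diag(\nu)$ has rank at most $n - p$, and then count dimensions against the $\binom{p}{2}$ baseline, with your kernel lower bound $p^2 - n$ being exactly the complement of the paper's rank upper bound $p(n-p)$ inside the $(np - n)$-dimensional tangent space. The only cosmetic difference is that the paper phrases the count via $\rank(I_p \otimes S)$ while you intersect $\{U : (A - \diag(\nu))U = 0\}$ with $\T_Y \MMC_p$; the arithmetic and the appearance of the Barvinok--Pataki threshold are identical.
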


\begin{proof}
	Theorem 1.6 from \cite{BVB18} gives that any spurious second-order critical point for \eqref{eq:MC-BM} must be full rank, meaning $\rank(Y) = p$. Let $S := A - \diag(\nu)$, where $\nu$ is defined as in \eqref{eq:nu}. Then the first-order criticality of $Y$ (Proposition \ref{prop:MC-BM-first-order-crit}) implies $SY = 0$, meaning $\rank(S) \le n - p$. We have (see Section \ref{subsec:Riemannian-derivatives}):
	\begin{align}
		\inangle{\Hess \obj(Y)[U], U} & = 2 \inangle{S, UU^\top} \nonumber                                     \\
		                              & = 2 \vect(U)^\top \vect(S U I_p) \nonumber                             \\
		                              & = 2 \vect(U)^\top (I_p \otimes S) \vect(U), \label{eq:equiv-Hess-form}
	\end{align}
	for any $U \in \T_Y \MMC_p$. Here, $\vect(Z)$ stacks the columns of $Z$ on top of one another to convert $Z$ into a column vector, and $\otimes$ denotes the Kronecker product. In \eqref{eq:equiv-Hess-form}, we used the fact that $\vect(CXB) = (B^\top \otimes C) \vect(X)$.

	Then clearly $\rank(\Hess \obj(Y)) \le \rank(I_p \otimes S) = \rank(I_p) \cdot \rank(S) \le p(n - p)$. Recall from \cite[Rem. 1]{WW20} that $Y$ is degenerate if
	\begin{align*}
		\rank(\Hess \obj(Y)) < \dim(\M_p) - \frac{p(p - 1)}{2} = np - n - \frac{p(p - 1)}{2}.
	\end{align*}
	(See, e.g., \cite[Proposition 1.2]{BVB18} for the fact that $\dim(\M_p) = np - n$.) Then $Y$ is degenerate if
	\begin{align*}
		p(n - p) < np - n - \frac{p(p - 1)}{2} \iff \frac{p(p + 1)}{2} > n.
	\end{align*}
\end{proof}

\paragraph{Degeneracy of higher-order derivatives at $\yaxial$.} We know from Proposition \ref{lem:Yaxial-second-order-crit} that $\yaxial$ is a spurious second-order critical point if and only if the cost matrix takes the form
\begin{align}
	\label{eq:Yaxial-second-order-obj-app}
	A =
	\begin{bmatrix}
		P & P \\
		P & P
	\end{bmatrix} + \diag(\alpha)
\end{align}
for some $\alpha \in \R^n$ and strictly pseudo-PSD $P \in \sym^{\frac{n}{2} \times \frac{n}{2}}$. A natural question is whether higher-order Riemannian derivatives\footnote{See Section 10.7 of \cite{Bou22} for an introduction to higher-order Riemannian derivatives.} could be used to identify an additional condition under which $\yaxial$ is a (spurious) local minimum. For example, one may hope to show that when $P$ is additionally (strictly) pseudo-PD (the condition identified in Lemma \ref{lem:Yaxial-local-min}), then the fourth derivative is positive. Unfortunately this is not possible, as one can show that \textit{all} higher-order Riemannian derivatives at $\yaxial$ are degenerate when the cost matrix takes the form \eqref{eq:Yaxial-second-order-obj-app}.

This follows via an examination of $\T_\yaxial \MMC_{n / 2}$. Indeed, consider the subspace $W \subseteq \T_\yaxial \MMC_{n / 2}$ given by those matrices of the form
\begin{align}
	\label{eq:degenerate-subspace}
	W := \inbraces{
		\begin{bmatrix}
			G \\
			-G
		\end{bmatrix}
		: G \in \sym^{\frac{n}{2} \times \frac{n}{2}}, G_{ii} = 0 \text{ for all $i \in [n / 2]$}
	}.
\end{align}
It is easily observed that $W$ is contained in the kernel of $\Hess \obj (\yaxial)$ when the cost matrix takes the form \eqref{eq:Yaxial-second-order-obj-app}. Furthermore, one can show that $W$ is orthogonal to the vertical space at $\yaxial$ \cite[Def. 9.24]{Bou22} when we consider the quotient manifold $\MMC_{n / 2}^{\text{full}} / O(n / 2)$, where $\MMC_{n / 2}^{\text{full}}$ denotes the open subset of $\MMC_{n / 2}$ containing its rank $n / 2$ elements and $O(n / 2)$ denotes the orthogonal group in dimension $n / 2$. Indeed, the vertical space at $\yaxial$ consists of all tangent vectors of the form $\yaxial B$ where $B \in \R^{\frac{n}{2} \times \frac{n}{2}}$ is skew-symmetric. (See, e.g., p. 6 of \cite{WW20}.) Thus, the vertical space at $\yaxial$ is precisely the subspace of $\T_\yaxial \M_{n / 2}$ taking the form
\begin{align*}
	\inbraces{\begin{bmatrix}
			H \\
			-H
		\end{bmatrix} : H \in \R^{\frac{n}{2} \times \frac{n}{2}}, \text{$H$ is skew-symmetric}},
\end{align*}
which is clearly orthogonal to $W$.

Since tangent vectors in the subspace $W$ are in the kernel of $\Hess \obj (\yaxial)$ but not in the vertical space,\footnote{In fact, one can show that when $P$ in \eqref{eq:Yaxial-second-order-obj-app} is pseudo-PD, then $W$ contains \textit{all} tangent vectors which are in the kernel of $\Hess \obj (\yaxial)$ but not in the vertical space.} one may worry that following a smooth curve $c: I \to \MMC_{n / 2}$ ($I$ is an open interval of $\R$ containing 0) such that $c(0) = \yaxial, c'(0) \in W$ could yield a decrease in the objective value for sufficiently small inputs $t > 0$. Indeed, one must rule out such behavior to prove $\yaxial$ is a local minimum.

Unfortunately, higher-order Riemannian derivatives at $\yaxial$ all also contain $W$ in their zero sets, so they cannot a priori be used to rule out this behavior. Recall that formally, the $k$th Riemannian derivative of $\obj$ is a tensor field of order $k$ \cite[Def. 10.76]{Bou22} given by $\nabla^k \obj$, where $\nabla$ denotes the total covariant derivative \cite[Def. 10.77]{Bou22}. (Elsewhere in the paper we have used $\nabla$ to denote the classical Euclidean derivative, but the usage of $\nabla$ in this section is different; the total covariant derivative is \textit{not} (in general) the Euclidean derivative.) Furthermore, recall that tensor fields are pointwise objects, so we use the notation $\nabla^k \obj(Y) : \underbrace{\T_Y \M_{n / 2} \times \dots \times \T_Y \M_{n / 2}}_{\text{$k$ times}} \to \R$ to denote the $k$-linear function associated to a point $Y \in \MMC_{n / 2}$. Then we have the following result:

\begin{proposition}[Higher-order Riemannian derivatives at $\yaxial$ are degenerate]
	\label{lem:higher-order-degenerate}
	For \eqref{eq:MC-BM} when $p = n / 2$, suppose the cost matrix takes the form
	\begin{align}
		\label{eq:degenerate-cost-matrix}
		A =
		\begin{bmatrix}
			B & B \\
			B & B
		\end{bmatrix} + \diag(\alpha)
	\end{align}
	for some $\alpha \in \R^n$ and $B \in \sym^{\frac{n}{2} \times \frac{n}{2}}$.
	Then the subspace $W \subseteq \T_\yaxial \MMC_{n / 2}$ defined in \eqref{eq:degenerate-subspace} is in the zero set of $\nabla^k \obj(\yaxial)$ for all $k \ge 1$. (By this we mean $\nabla^k \obj(\yaxial)(U, \dots, U) = 0$ for any $U \in W$).
\end{proposition}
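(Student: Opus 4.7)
The plan is to use the fact---already exploited in the proof of Lemma~\ref{lem:conv-to-point-with-same-obj}---that $\obj$ takes the same value at every antipodal configuration when the cost matrix has the block form \eqref{eq:degenerate-cost-matrix}. Concretely, for each $U \in W$ I would construct an explicit \emph{geodesic} $c : \R \to \MMC_{n/2}$ with $c(0) = \yaxial$ and $c'(0) = U$ whose image lies entirely in the antipodal set, so that $\obj \circ c$ is constant. The standard Taylor-expansion-along-a-geodesic identity \cite[Ch.~10]{Bou22},
\[
\frac{d^k}{dt^k}\bigg|_{t=0} \obj(c(t)) \;=\; \nabla^k \obj(\yaxial)(U, \dots, U),
\]
which is valid precisely because $\nabla_{c'} c' = 0$ along a geodesic cancels all ``extra'' terms produced by iterated covariant differentiation, then immediately gives $\nabla^k \obj(\yaxial)(U, \dots, U) = 0$ for every $k \ge 1$.

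\textbf{Building the geodesic.} Write $U = \begin{bmatrix} G \\ -G \end{bmatrix} \in W$ with $G \in \sym^{\frac{n}{2} \times \frac{n}{2}}$ and $G_{ii} = 0$ for all $i \in [n/2]$. Since $\MMC_{n/2}$ is a Cartesian product of $n$ unit spheres in $\R^{n/2}$, a row-wise product of great-circle geodesics (one on each sphere factor) is again a geodesic on $\MMC_{n/2}$. For $i \in [n/2]$ set
\[
c(t)_i \;:=\; \cos(t \norm{G_i}) \, e_i + \sin(t \norm{G_i}) \, \frac{G_i}{\norm{G_i}}, \qquad c(t)_{n/2 + i} \;:=\; - c(t)_i,
\]
with the convention $c(t)_i := e_i$ when $G_i = 0$. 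Using $\inangle{e_i, G_i} = G_{ii} = 0$, one checks that each $c(t)_i$ remains on the unit sphere, that $c(0) = \yaxial$, and that $c'(0) = U$.

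\textbf{Invariance of the objective.} By construction $c(t) = \begin{bmatrix} H(t) \\ -H(t) \end{bmatrix}$ for some $H(t) \in \R^{\frac{n}{2} \times \frac{n}{2}}$, so a block computation gives
\[
c(t) c(t)^\top \;=\; \begin{bmatrix} H(t) H(t)^\top & -H(t) H(t)^\top \\ -H(t) H(t)^\top & H(t) H(t)^\top \end{bmatrix}.
\]
Taking the inner product with $A$ in the form \eqref{eq:degenerate-cost-matrix}, the four $B$-blocks cancel pairwise, leaving only $\inangle{\diag(\alpha), c(t) c(t)^\top} = \sum_i \alpha_i \norm{c(t)_i}^2 = \sum_i \alpha_i$ by feasibility. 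Hence $\obj(c(t))$ is constant in $t$, so $\frac{d^k}{dt^k}\big|_{t=0} \obj(c(t)) = 0$ for all $k \ge 1$, and the Taylor identity above yields the claim.

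\textbf{Anticipated difficulty.} The argument is quite short; the only subtle point is insisting that $c$ be a \emph{geodesic} (rather than an arbitrary smooth curve) so that the iterated $t$-derivatives of $\obj \circ c$ coincide with the symmetric covariant derivatives $\nabla^k \obj(\yaxial)(U, \dots, U)$ without any correction terms. Once this is in place, the construction itself, and the verification that $c(t)$ stays antipodal (which is forced by the negation between the top and bottom blocks of $U$), are both immediate from the row-wise great-circle formula.
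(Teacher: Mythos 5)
Your proposal is correct and follows essentially the same route as the paper: both construct the row-wise product of great-circle geodesics through $\yaxial$ in the direction $U$, observe that the antipodal structure of $U$ forces $c(t)$ to remain an antipodal configuration on which $\obj$ is constant (your explicit block cancellation of the $B$-blocks is just a spelled-out version of the paper's remark that all antipodal configurations share the same objective value), and then invoke the Taylor-along-a-geodesic identity $\nabla^k \obj(\yaxial)(U,\dots,U) = (\obj\circ c)^{(k)}(0)$. No gaps.
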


Note that Proposition \ref{lem:higher-order-degenerate} of course additionally encompasses the cases where $B$ is pseudo-PSD, pseudo-PD, etc.

\begin{proof}
	For notational brevity, let $p$ be shorthand for $n / 2$ in this proof.
	Let $c : I \to \MMC_p$ be a geodesic \cite[Def. 5.38]{Bou22} such that $c(0) = \yaxial, c'(0) = U \in W$. ($I$ is an open interval in $\R$ containing 0.) A simple extension of Example 10.81 in \cite{Bou22} implies
	\begin{align*}
		\nabla^k \obj(\yaxial)(U, \cdots, U) = (\obj \circ c)^{(k)} (0)
	\end{align*}
	for any $k \ge 1$. (Here, note that $\obj \circ c : I \to \R$, so $(\obj \circ c)^{(k)}$ is the ``usual'' $k$th derivative of a function from (an open interval in) $\R$ to $\R$.) Thus, it is sufficient to exhibit such a geodesic $c$ such that $\obj \circ c$ is constant over $I$ (implying $(\obj \circ c)^{(k)} (0) = 0$).

	To construct this geodesic, we will take advantage of the fact that $\MMC_p$ is a product manifold formed by the Cartesian product of $n$ unit spheres in $\R^p$. Recall that for the unit sphere $\sphere^{p - 1}$ with $x \in \sphere^{p - 1}, v \in \T_x \sphere^{p - 1}$, the curve
	\begin{align*}
		z_{x, v}(t) = \cos(t \norm{v}) x + \frac{\sin(t \norm{v})}{\norm{v}} v
	\end{align*}
	(with the usual smooth extension $\sin(t) / t = 0$ at $t = 0$) is a geodesic which traces the great circle on the sphere from $x$ in the direction $v$. (See Example 5.37 in \cite{Bou22}.) Of course, $z_{x, v} '(0) = v$.

	Then, viewing $\MMC_{p}$ in the form $(\underbrace{\sphere^{p - 1}, \dots, \sphere^{p - 1}}_{\text{$n$ times}})$ with the $i$th entry corresponding to the $i$th row in $\MMC_{p}$, we choose $c(t) = \inparen{ z_{\yaxial_1, U_1}(t), \dots, z_{\yaxial_n, U_n}(t) }$, where $\yaxial_i, U_i \in \R^p$ denote the $i$th rows of $\yaxial, U$ (taken as column vectors). Then $c(t)$ is a geodesic (e.g., \cite[Exerc. 5.39]{Bou22}) and $c(0) = \yaxial, c'(0) = U$. All that is left is to show that $\obj \circ c$ is constant. This follows due to the form of $W$; it is easy to check that for all sufficiently small $t$, we have that $c(t)$ is an \textit{antipodal configuration} as defined in Section \ref{sec:local-minimum}. (Recall that antipodal configurations take the form $\begin{bmatrix}
			G \\
			-G
		\end{bmatrix} \in \MMC_{n / 2}$ for some $G \in \R^{\frac{n}{2} \times \frac{n}{2}}$.) And it is easy to see that when the cost matrix takes the form \eqref{eq:degenerate-cost-matrix}, all antipodal configurations have the same objective value.
\end{proof}

Thus, we have identified (a subspace of) tangent vectors at $\yaxial$ which are not in the vertical space at $\yaxial$ and which lie in the zero sets of all higher-order Riemannian derivatives at $\yaxial$. As a result, it is not clear how higher-order Riemannian derivatives at $\yaxial$ can be used to prove $\yaxial$ is a local minimum.

\section{Proof that Riemannian gradient descent is nonincreasing (Lemma \ref{lem:obj-nonincreasing})}
\label{app:nonincreasing}
We first restate a result from \cite{Bou22} which yields a quadratic upper bound on the objective in the same style as the classic quadratic upper bound which holds in the Euclidean case when the Euclidean gradient is Lipschitz. (Indeed, the Riemannian gradient $\grad \obj$ for \eqref{eq:MC-BM} is Lipschitz, but defining Lipschitzness for the Riemannian gradient requires care \cite[Definition 10.44]{Bou22}. Proposition \ref{prop:quadratic-bound} is sufficient for our purposes since we only need a quadratic upper bound and don't care about the actual value of the Lipschitz constant.)

\begin{proposition}[Quadratic bound {\cite[Lem. 10.57, abbreviated]{Bou22}}]
	\label{prop:quadratic-bound}
	Consider a smooth manifold $\M$, retraction $\ret$ on $\M$ \textup{\cite[Definition 3.47]{Bou22}}, compact subset $\mathcal{K} \subseteq \M$, and continuous, nonnegative function $r: \compsub \to \R$. The set
	\begin{align*}
		\tangsub = \inbraces{ (x, s) \in \T \M : x \in \compsub \text{ and } \norm{s} \le r(x) }
	\end{align*}
	is compact in the tangent bundle $\T \M$ \textup{\cite[Definition 3.42]{Bou22}}. Assume $f : \M \to \R$ is twice continuously differentiable. Then there exists a constant $L$ such that, for all $(x, s) \in \tangsub$, we have
	\begin{align*}
		\left|
		f(\ret_x(s)) - f(x) - \inangle{s, \grad f(x)} \right| \le \frac{L}{2} \norm{s}^2.
	\end{align*}
\end{proposition}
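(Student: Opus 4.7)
The plan is to reduce the statement to a uniform second-order Taylor estimate for the scalar function $g_x(s) := f(\ret_x(s))$ along each fiber $\T_x \M$, and then extract a single constant $L$ by invoking compactness of $\tangsub$. The two defining properties of a retraction---$\ret_x(0) = x$ and $D\ret_x(0) = \mathrm{id}_{\T_x \M}$---will pin down the zero- and first-order coefficients of the Taylor expansion of $g_x$ at the origin of the fiber to match exactly the linear surrogate $f(x) + \inangle{s, \grad f(x)}$ that appears on the left-hand side.

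First I would verify the two zero-order identities that drive the expansion: $g_x(0) = f(\ret_x(0)) = f(x)$, and by the chain rule $g_x'(0)[s] = Df(x)[D\ret_x(0)[s]] = Df(x)[s] = \inangle{\grad f(x), s}$, where the last equality is the definition of the Riemannian gradient as the Riesz representative of the differential. Then, applying Taylor's theorem with integral remainder to the smooth scalar curve $t \mapsto g_x(ts)$ on $[0, 1]$ gives
\[
f(\ret_x(s)) - f(x) - \inangle{s, \grad f(x)} \;=\; \int_0^1 (1 - t)\, g_x''(ts)[s, s]\, dt,
\]
so the whole problem reduces to bounding $\norm{g_x''(s)}$ uniformly over $(x, s) \in \tangsub$.

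Next I would extract the constant $L$ via compactness. Because $f$ is $C^2$ and $\ret$ is smooth, the map $(x, s) \mapsto g_x''(s)$ is continuous on $\T \M$, so the scalar function $(x, s) \mapsto \norm{g_x''(s)}$ is continuous on $\tangsub$. To see that $\tangsub$ itself is compact, I would observe that $\compsub$ is compact by hypothesis, $r$ is continuous on $\compsub$ and hence bounded by some $R$, and $\tangsub$ is a closed subset of the preimage of $\compsub$ under the bundle projection intersected with the set $\{\norm{s} \le R\}$---an argument that one can make precise via local trivializations of $\T\M$ over a finite open cover of $\compsub$. Setting $L := \sup_{(x, s) \in \tangsub} \norm{g_x''(s)}$ and bounding $|g_x''(ts)[s, s]| \le L\, \norm{s}^2$ inside the integral (together with $\int_0^1 (1 - t)\, dt = 1/2$) delivers the claimed $\tfrac{L}{2}\norm{s}^2$ bound.

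The main obstacle is really the topological bookkeeping: verifying compactness of $\tangsub$ inside $\T\M$ and the joint continuity of $g_x''(s)$ in $(x, s)$. Both are standard but not entirely trivial, since the tangent bundle is not globally a product. Once these points are handled via local trivializations and the smoothness of $\ret$, the quadratic estimate itself is just the usual Euclidean ``bounded second derivative implies quadratic upper bound'' argument lifted through the retraction.
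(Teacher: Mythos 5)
Your argument is correct, but note that the paper does not prove this proposition at all: it is quoted (abbreviated) from \cite[Lem.~10.57]{Bou22}, and your write-up is essentially the standard proof of that lemma --- Taylor's theorem with integral remainder applied to $t \mapsto f(\ret_x(ts))$, with the retraction's defining properties $\ret_x(0)=x$ and $D\ret_x(0)=\mathrm{id}$ fixing the zeroth- and first-order terms, and compactness of $\tangsub$ supplying the uniform bound $L$ on the second derivative. The only point worth making explicit is that bounding $|g_x''(ts)[s,s]|$ inside the integral requires $(x,ts)\in\tangsub$ for all $t\in[0,1]$, which holds because $\norm{ts}\le\norm{s}\le r(x)$; with that observation the proof is complete.
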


We now give the proof of Lemma 5:

\begin{proofof}{Lemma \ref{lem:obj-nonincreasing}}
	Formally, our goal is to identify $\etatilde > 0$ such that for any $\eta < \etatilde$ and $Y \in \MMC_p$, we have
	\begin{align*}
		\obj (Y') \le \obj(Y) \quad \text{where $Y' = \ret_Y \inparen{- \eta \grad \obj(Y)}$},
	\end{align*}
	with $\ret_Y$ defined as the metric projection retraction for $\MMC_p$, as in Section \ref{subsec:Riemannian gradient descent}. (In fact, we will show that when $\grad \obj(Y) \ne 0$, our proof yields a strict decrease: $\obj (Y') < \obj(Y)$.)

	We apply Proposition \ref{prop:quadratic-bound} with $\compsub \gets \MMC_p$, as clearly $\MMC_p$ is compact. The fact that $\grad \obj$ is continuous implies $\norm{\grad \obj (\cdot)}$ is continuous, and this together with the compactness of $\MMC_p$ implies there exists some constant $P$ such that $\norm{\grad \obj(Z)} \le P$ for all $Z \in \MMC_p$.

	Pick $r : \MMC_p \to \R$ to be the constant function which sends everything to $P$, i.e., $r(Z) = P$ for all $Z \in \MMC_p$. Then for all $0 < \eta \le 1$ and $Z \in \MMC_p$, we have $(Z, - \eta \grad \obj(Z)) \in \tangsub$.

	Then Proposition \ref{prop:quadratic-bound} implies there exists some constant $L$ such that for $0 < \eta \le 1$, we have
	\begin{align*}
		\obj(Y') - \obj(Y) & = \obj \inparen{\ret_Y \inparen{- \eta \grad \obj(Y)}} - \obj(Y)                               \\
		                   & \le  \frac{L}{2} \norm{- \eta \grad \obj(Y)}^2 + \inangle{- \eta \grad \obj(Y), \grad \obj(x)} \\
		                   & = \inparen{ \frac{\eta^2 L }{2} - \eta} \norm{\grad \obj(Y)}^2
	\end{align*}
	Then $\frac{\eta^2 L }{2} - \eta < 0$ for all $0 < \eta < 2/L$, so setting $\etatilde \gets \min \inbraces{ 1, \frac{1}{L}}$ yields the desired result.
\end{proofof}

\section{Minor claims}
\label{app:minor-claims}
In this section, we prove minor claims that are used in Section \ref{sec:local-minimum}.

\begin{lemma}[Close to orthogonal]
	\label{lem:close-to-orthogonal}
	Let $v \in \R^m, \delta \in \R^m$ be such that $\norm{v} = \norm{v + \delta} = 1$. Then $\inangle{\delta, v} = - \norm{\delta}^2 / 2$.
\end{lemma}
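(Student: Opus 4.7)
The plan is to simply expand the squared norm $\norm{v+\delta}^2$ using bilinearity of the inner product and isolate the cross term $\inangle{\delta,v}$. Since both $\norm{v}$ and $\norm{v+\delta}$ are constrained to equal $1$, the squared norms are both $1$, and the identity drops out immediately.

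Concretely, I would write
\[
1 = \norm{v+\delta}^2 = \inangle{v+\delta, v+\delta} = \norm{v}^2 + 2\inangle{\delta,v} + \norm{\delta}^2 = 1 + 2\inangle{\delta,v} + \norm{\delta}^2,
\]
and then rearrange to get $\inangle{\delta,v} = -\norm{\delta}^2/2$. There is no real obstacle here; the only thing to be careful about is not to conflate $\inangle{\delta,v}$ with $\inangle{v,\delta}$ in sign (they are equal because the inner product is symmetric over the reals). Geometrically, this is just the statement that when a perturbation $\delta$ keeps a unit vector on the unit sphere, $\delta$ is orthogonal to $v$ up to a second-order correction $-\norm{\delta}^2/2$, which is the quantitative form of the fact that tangent vectors to the sphere are orthogonal to the base point.
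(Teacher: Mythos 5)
Your proof is correct and is exactly the paper's argument: expand $\norm{v+\delta}^2$ by bilinearity, use that both squared norms equal $1$, and solve for $\inangle{\delta,v}$. Nothing further is needed.
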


\begin{proof}
	We have
	\begin{align*}
		         & \norm{v + \delta}^2 = 1                                  \\
		\implies & \norm{v}^2 + 2 \inangle{\delta, v} + \norm{\delta}^2 = 1 \\
		\implies & \inangle{\delta, v} = - \norm{\delta}^2 / 2.
	\end{align*}
\end{proof}

\begin{lemma}[Reverse triangle inequality with squares]
	\label{lem:reverse-triangle-squares}
	Let $u, v \in \R^m$. Then
	\begin{align*}
		\biggl \lvert \smallnorm{u}^2 - \smallnorm{v}^2  \biggr \rvert
		\le \norm{u + v} \norm{u - v}.
	\end{align*}
\end{lemma}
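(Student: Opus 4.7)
The plan is to use the algebraic identity $\smallnorm{u}^2 - \smallnorm{v}^2 = \inangle{u+v, u-v}$, which follows by expanding the inner product on the right-hand side via bilinearity: $\inangle{u+v, u-v} = \inangle{u,u} - \inangle{u,v} + \inangle{v,u} - \inangle{v,v} = \norm{u}^2 - \norm{v}^2$, where the cross terms cancel by the symmetry of the inner product on $\R^m$.

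Given this identity, I would then apply the Cauchy-Schwarz inequality to the right-hand side to obtain $\bigl\lvert \inangle{u+v, u-v} \bigr\rvert \le \norm{u+v} \cdot \norm{u-v}$. Combining these two steps yields $\bigl\lvert \smallnorm{u}^2 - \smallnorm{v}^2 \bigr\rvert = \bigl\lvert \inangle{u+v, u-v} \bigr\rvert \le \norm{u+v} \cdot \norm{u-v}$, which is exactly the claim.

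There is no real obstacle here; this is a one-line application of a polarization-style identity plus Cauchy-Schwarz, and both are standard facts about the Euclidean inner product on $\R^m$. The only thing worth being careful about is keeping the absolute value on the left-hand side (since $\norm{u}^2 - \norm{v}^2$ could be negative), which is handled automatically by taking absolute values before applying Cauchy-Schwarz.
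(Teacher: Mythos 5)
Your proof is correct and matches the paper's argument exactly: both expand $\smallnorm{u}^2 - \smallnorm{v}^2 = \inangle{u+v, u-v}$ and then apply Cauchy-Schwarz. The only cosmetic difference is that the paper handles the absolute value ``by symmetry'' while you take absolute values before applying Cauchy-Schwarz; these are equivalent.
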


\begin{proof}
	Note that
	\begin{align*}
		\norm{u}^2 - \norm{v}^2 = \inangle{u + v, u - v} \le \norm{u + v} \norm{u - v}.
	\end{align*}
	The result follows by symmetry.
\end{proof}

\begin{lemma}[Normalizing doesn't increase the potential]
	\label{lem:norm-potential}
	Set $p = n / 2$, and let $\Phi$ be defined as in Lemma \ref{lem:decrease-in-potential}, although we abuse notation here and extend the domain to $\R^{n \times p}$. Let $Z \in \R^{n \times p}$ be arbitrary except with the single restriction that the norm of each of its rows is at least 1, and let $\zbar \in \MMC_{n / 2}$ denote the matrix formed by normalizing each row of $Z$. Then $\Phi(\zbar) \le \Phi(Z)$.
\end{lemma}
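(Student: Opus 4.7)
The plan is to reduce the statement to a pointwise (row-by-row) inequality and then prove it with a short algebraic computation. Writing $Z = \begin{bmatrix} Z_1 \\ Z_2 \end{bmatrix}$ and $\zbar = \begin{bmatrix} \zbar_1 \\ \zbar_2 \end{bmatrix}$ with $Z_1, Z_2, \zbar_1, \zbar_2 \in \R^{p \times p}$, we have
\[
\Phi(Z) = \sum_{i=1}^{p} \smallnorm{(Z_1)_i + (Z_2)_i}^2, \qquad \Phi(\zbar) = \sum_{i=1}^{p} \smallnorm{(\zbar_1)_i + (\zbar_2)_i}^2,
\]
where $(\zbar_1)_i = (Z_1)_i / \smallnorm{(Z_1)_i}$ and $(\zbar_2)_i = (Z_2)_i / \smallnorm{(Z_2)_i}$. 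Thus it suffices to show, for each $i$ and with $u := (Z_1)_i$, $v := (Z_2)_i$, that
\[
\left\lVert \frac{u}{\norm{u}} + \frac{v}{\norm{v}} \right\rVert^2 \le \norm{u + v}^2 \qquad \text{whenever } \norm{u}, \norm{v} \ge 1.
\]

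Second, I would prove this pointwise inequality by parametrization. Let $a := \norm{u}$, $b := \norm{v}$, and $c := \inangle{u/\norm{u}, v/\norm{v}} \in [-1, 1]$. Expanding both squared norms gives
\[
\norm{u + v}^2 - \left\lVert \tfrac{u}{\norm{u}} + \tfrac{v}{\norm{v}} \right\rVert^2 = (a^2 + b^2 - 2) + 2c(ab - 1).
\]
Since $a, b \ge 1$, we have $ab - 1 \ge 0$, so the worst case is $c = -1$. At $c = -1$, the expression becomes $a^2 + b^2 - 2 - 2(ab - 1) = (a - b)^2 \ge 0$. For general $c \in [-1,1]$, the right-hand side is a nondecreasing linear function of $c$ (since its coefficient $2(ab - 1)$ is nonnegative), so it is bounded below by its value at $c = -1$, namely $(a-b)^2 \ge 0$. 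This establishes the pointwise inequality.

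Summing the pointwise inequality over $i \in [p]$ gives $\Phi(\zbar) \le \Phi(Z)$, as desired. There is no real obstacle here; the only thing to be careful about is invoking the hypothesis $\norm{u}, \norm{v} \ge 1$ at precisely the step where we need $ab - 1 \ge 0$ to make the $c$-dependence monotone. Intuitively, normalization shrinks any row whose length exceeds $1$, which can only reduce each constituent sum-of-rows norm in the potential.
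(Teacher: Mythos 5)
Your proof is correct and is essentially the same as the paper's: the paper also reduces to the row-wise inequality and then proves it via an auxiliary claim (Lemma~\ref{lem:proj-contractive}) whose computation — expand the squared norms, apply Cauchy--Schwarz to the cross term, and use $\alpha\beta \ge 1$ to land on $(\alpha-\beta)^2 \ge 0$ — is exactly your parametrized argument in $a$, $b$, $c$.
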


\begin{proof}
	It is clearly sufficient to show that $\norm{\zbar_i + \zbar_{i + p}}^2 \le \norm{Z_i + Z_{i + p}}^2$ for all $i \in [p]$. ($Z_i \in \R^p$ denotes the $i$th row.) This follows from Lemma \ref{lem:proj-contractive} below.
\end{proof}

\begin{lemma}[Metric projection is contractive]
	\label{lem:proj-contractive}
	Let $u, v \in \R^m$ be such that $\norm{u} = \norm{v} = 1$. Then for any $\alpha, \beta \ge 1$, we have
	\begin{align*}
		\norm{u - v}^2 \le \norm{\alpha u - \beta v}^2.
	\end{align*}
\end{lemma}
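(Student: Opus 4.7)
The plan is to expand both squared norms, reduce the claim to a single-variable inequality in $c := \langle u, v\rangle$, and use $(\alpha - \beta)^2 \ge 0$ as the final algebraic fact.

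First I would write out
\[
\norm{u - v}^2 = 2 - 2c, \qquad \norm{\alpha u - \beta v}^2 = \alpha^2 + \beta^2 - 2\alpha\beta\, c,
\]
using $\norm{u} = \norm{v} = 1$. Subtracting, the desired inequality becomes
\[
\alpha^2 + \beta^2 - 2 \;\ge\; 2(\alpha\beta - 1)\, c.
\]

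Next I would observe that since $\alpha, \beta \ge 1$, we have $\alpha\beta - 1 \ge 0$, so the right-hand side is a nondecreasing linear function of $c$. Because $c = \langle u, v\rangle \le \norm{u}\norm{v} = 1$ by Cauchy–Schwarz, it suffices to verify the inequality at $c = 1$, where it reads $\alpha^2 + \beta^2 - 2 \ge 2\alpha\beta - 2$, i.e., $(\alpha - \beta)^2 \ge 0$, which is trivially true. This closes the argument.

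There is really no obstacle here; the only thing to be a bit careful about is noting that the coefficient $(\alpha\beta - 1)$ on the right-hand side is nonnegative precisely because \emph{both} $\alpha$ and $\beta$ are at least $1$ (which is exactly the hypothesis), so the reduction to the $c = 1$ case is legitimate regardless of the sign of $c$.
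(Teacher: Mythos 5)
Your proof is correct and is essentially the same as the paper's: both expand the squared norms, use $\alpha\beta - 1 \ge 0$ together with Cauchy--Schwarz ($\langle u,v\rangle \le 1$) to handle the cross term, and reduce the claim to $(\alpha-\beta)^2 \ge 0$. The only difference is presentational (you phrase it as checking the worst case $c=1$ of a linear function, the paper writes the same bound as a chain of inequalities).
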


\begin{proof}
	We have
	\begin{align*}
		\norm{\alpha u - \beta v}^2 - \norm{u - v}^2
		 & = \alpha^2  + \beta^2 - 2 (\alpha \beta - 1) \inangle{u, v}  - 2 \\
		 & \ge \alpha^2 + \beta^2 - 2 (\alpha \beta - 1) - 2                \\
		 & = \alpha^2 + \beta^2 - 2 \alpha \beta                            \\
		 & = (\alpha - \beta)^2                                             \\
		 & \ge 0.
	\end{align*}
	In the second line, we used Cauchy-Schwarz and the fact that $\alpha \beta \ge 1$.
\end{proof}

\end{document}